\numberwithin{equation}{section}
\DeclareMathOperator\GL{GL}
\DeclareMathOperator\Spec{Spec}
\DeclareMathOperator\Spf{Spf}
\DeclareMathOperator\Spa{Spa}
\DeclareMathOperator\Fil{Fil}
\DeclareMathOperator\rig{rig}
\renewcommand{\phi}{\varphi}
\newcommand{\Acal}{\mathscr{A}}
\newcommand{\Bcal}{\mathscr{B}}
\newcommand{\Ccal}{\mathcal{C}}
\newcommand{\Dcal}{\mathcal{D}}
\newcommand{\Hcal}{\mathcal{H}}
\newcommand{\Mcal}{\mathcal{M}}
\newcommand{\Ncal}{\mathcal{N}}
\newcommand{\Ocal}{\mathcal{O}}
\newcommand{\Rcal}{\mathcal{R}}
\newcommand{\Scal}{\mathcal{S}}
\newcommand{\Tcal}{\mathcal{T}}
\newcommand{\Xcal}{\mathcal{X}}
\newcommand{\Vcal}{\mathcal{V}}
\newcommand{\Wcal}{\mathcal{W}}
\newcommand{\Zcal}{\mathcal{Z}}
\newcommand{\Q}{\mathbb{Q}}
\newcommand{\R}{\mathbb{R}}
\newcommand{\Z}{\mathbb{Z}}
\newcommand{\C}{\mathbb{C}}
\newcommand{\Abb}{\mathbb{A}}
\newcommand{\boldB}{\mathbb{B}}
\newcommand{\Fbb}{\mathbb{F}}
\newcommand{\Gbb}{\mathbb{G}}
\newcommand{\Ubb}{\mathbb{U}}
\newcommand{\Nfrak}{\mathfrak{N}}
\newcommand{\Xfrak}{\mathfrak{X}}
\newtheorem{theo}{Theorem}[section]
\newtheorem{lem}[theo]{Lemma}
\newtheorem{prop}[theo]{Proposition}
\newtheorem{cor}[theo]{Corollary}
\newtheorem{conj}[theo]{Conjecture}
\theoremstyle{remark}
\newtheorem{rem}[theo]{Remark}
\theoremstyle{remark}
\theoremstyle{definition}
\newtheorem{defn}[theo]{Definition}
\begin{document}

\title[Trianguline representations and finite slope spaces  ]{Families of trianguline representations and finite slope spaces}
\author[E. Hellmann]{Eugen Hellmann}\thanks{Mathematisches Institut der Universit\"at Bonn, Endenicher Allee 60, 53115 Bonn \\ E-mail: hellmann@math.uni-bonn.de}
\begin{abstract}
We apply the theory of families of $(\phi,\Gamma)$-modules to trianguline families as defined by Chenevier. This yields a new definition of Kisin's \emph{finite slope} subspace as well as higher dimensional analogues. 
Especially we show that these finite slope spaces contain eigenvarieties for unitary groups as closed subspaces. 
This implies that the representations arising from overconvergent $p$-adic automorphic forms on certain unitary groups are trianguline when restricted to the local Galois group.
\end{abstract}
\maketitle

\section{Introduction}

The idea of a \emph{finite slope space} follows an construction of Kisin \cite{KisinsXfs} who aims at giving a new description of the eigencurve using families of $p$-adic Galois representation instead of overconvergent $p$-adic modular forms. The starting point for this construction is the observation that the Galois representations arising from overconvergent $p$-adic eigenforms $f$ admit one crystalline period after restriction to a decomposition group at $p$. Even better, the crystalline Frobenius acts on this period via multiplication with the $U_p$-eigenvalue of $f$. The finite slope space of Kisin \cite{KisinsXfs} is a space parametrizing deformations $\rho$ of a fixed residual representation $\bar\rho$ of a global Galois group, together with one crystalline period of the restriction $\rho|_{\mathcal{G}_p}$ to the local Galois group at $p$. 

The existence of a crystalline period can be  generalized to the notion of a trianguline $(\phi,\Gamma)$-module, as defined by Colmez \cite{Colmez}, see also Berger's survey article \cite{Berger}. These objects form a special class of $(\phi,\Gamma)$-modules over the Robba ring, namely those $(\phi,\Gamma)$-modules that are a successive extensions of one-dimensional ones. In this article we give a new definition of a finite slope space using families of trianguline $(\phi,\Gamma)$-modules as defined by Chenevier \cite{Chenevier}. One of the main steps is to use the construction of a family of Galois representations inside a family of $(\phi,\Gamma)$-modules. This construction is carried out in our paper \cite{families} building on work of Kedlaya and Liu \cite{KedlayaLiu}. 

Finally, we will construct additional structures on the finite slope space that look similar to the structures on eigenvarieties for unitary groups, as studied by Bellaiche and Chenevier in \cite{BellaicheChenevier}.  These extra structures will yield a map from the eigenvariety to our finite slope space, which turns out to be a closed embedding. 
We conjecture that this map induces an isomorphism of an eigenvariety for a unitary group onto a union of irreducible components of (some modification of) the finite slope space. 


We show that this conjecture is equivalent to a conjecture of Bellaiche and Chenevier concerning the local geometry of an eigenvariety at non-critically refined points. And, even better, it is enough to prove the conjecture of \cite{BellaicheChenevier} for just one non-critical crystalline point in each irreducible component. 

More precisely, our results are as follows. Let $\mathcal{G}_p={\rm Gal}(\bar\Q_p/\Q_p)$. We write $\Tcal$ for the space of continuous characters of $\Q_p^\times$ and let $\Tcal_d^{\rm reg}\subset \Tcal^d$ be the set of characters satisfying a certain regularity condition (compare $(\ref{Qpcharacters})$  for the precise definition). Let $\bar\tau:\mathcal{G}_p\rightarrow\Fbb$ be a continuous pseudo-character with values in a finite field $\Fbb$ and write $\Xfrak_{\bar\tau}$ for the generic fiber of the universal deformation ring of $\bar\tau$.  
\begin{theo} Let $p>d$ and let $\bar\tau:\mathcal{G}_p\rightarrow \Fbb$ be a continuous pseudo-character of dimension $d$.\\ 
\noindent {\rm (i)} There is a space $\Scal^{\rm ns}(\bar\tau)$ of trianguline $\mathcal{G}_p$-representations with regular parameters {\rm (}i.e. with a morphism $\Scal^{\rm ns}(\bar\tau)\rightarrow \Tcal_d^{\rm reg}${\rm )} such that the trace of the trianguline family reduces to $\bar\tau$ modulo the ideal of topologically nilpotent elements {\rm (}see $(\ref{reductionmodp})${\rm )}. \\
\noindent {\rm (ii)} The induced morphism $\pi_{\bar\tau}:\Scal^{\rm ns}(\bar\tau)\rightarrow \Xfrak_{\bar\tau}\times\Tcal_d^{\rm reg}$ is finite and injective. 
\end{theo}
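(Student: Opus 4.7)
The plan is to construct $\Scal^{\rm ns}(\bar\tau)$ as a moduli space of rank-$d$ trianguline $(\phi,\Gamma)$-modules over the relative Robba ring with regular parameters, cut down to the locus where the associated pseudo-character reduces to $\bar\tau$, and then to verify injectivity and finiteness of $\pi_{\bar\tau}$ essentially by unpacking this definition.

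The construction proceeds in two steps. First, following Chenevier \cite{Chenevier}, one has a rigid-analytic moduli space $\Scal_d$ parametrizing rank-$d$ trianguline $(\phi,\Gamma)$-modules with parameters in $\Tcal_d^{\rm reg}$. Locally over $\Tcal_d^{\rm reg}$ this is built by iterated extensions of the rank-one modules $\Rscr(\delta_i)$; the regularity condition on $(\delta_i)$ controls the dimensions of the relevant $\mathrm{Ext}^1$-groups and exhibits $\Scal_d$ as an iterated affine-bundle-like fibration over $\Tcal_d^{\rm reg}$. Second, the main construction of \cite{families} attaches to the universal trianguline module a continuous pseudo-character $\tau:\mathcal{G}_p\to\Ocal(\Scal_d)$ of dimension $d$. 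I then define $\Scal^{\rm ns}(\bar\tau)\subset\Scal_d$ as the locus where $\tau$ reduces to $\bar\tau$ modulo the ideal of topologically nilpotent functions (cf.\ $(\ref{reductionmodp})$). By the universal property of $\Xfrak_{\bar\tau}$ the restriction of $\tau$ factors through $\Xfrak_{\bar\tau}$, so together with the parameter map we obtain $\pi_{\bar\tau}:\Scal^{\rm ns}(\bar\tau)\to\Xfrak_{\bar\tau}\times\Tcal_d^{\rm reg}$.

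For injectivity I would argue on rigid points over finite extensions $L/\Q_p$. A point of $\Scal^{\rm ns}(\bar\tau)(L)$ lying above $(\tau_0,\delta_0)$ records a trianguline $(\phi,\Gamma)$-module $D$ over $L$ with pseudo-character $\tau_0$ and triangulation parameters $\delta_0$. The regularity of $\delta_0$ and the uniqueness of triangulations with regular parameters (Chenevier \cite{Chenevier}, see also Bellaiche--Chenevier \cite{BellaicheChenevier}) show that the triangulation is determined by the underlying $(\phi,\Gamma)$-module and the parameters; it remains to see that the underlying etale $(\phi,\Gamma)$-module, i.e.\ the Galois representation, is determined by the data. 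This is immediate when $\tau_0$ corresponds to an irreducible representation; in general one uses that $\Scal^{\rm ns}(\bar\tau)$ maps to the deformation space of the pseudo-character (not of a semisimple representation), so that potentially distinct lifts of $\tau_0$ to a genuine representation are already separated in $\Xfrak_{\bar\tau}$, and the triangulation structure combined with regularity picks out at most one such.

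Finiteness will be the main technical obstacle. Since $\pi_{\bar\tau}$ is injective and the target is separated, quasi-finiteness is automatic and the issue is properness, which I would check via the valuative criterion for rigid-analytic spaces: given a trianguline $(\phi,\Gamma)$-module $D_K$ over $\Spa K$ whose pseudo-character and parameters extend integrally to $\Spa\Ocal_K$ for a rank-one valuation ring $\Ocal_K\subset K$, one must extend the triangulation. The Galois lattice extends by integrality of $\tau$, so the $(\phi,\Gamma)$-module $D_{\Ocal_K}$ is defined, and the successive steps of the triangulation are classified by elements in relative $\mathrm{Ext}^1(\Rscr(\delta'),\Rscr(\delta''))$-groups over $\Ocal_K$. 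The heart of the argument is then to show that the regularity condition controls the formation of these Ext groups in families sharply enough to force integrality of the relevant extension classes; this is the step I would expect to require the most care, since it is where the finer properties of the construction in \cite{families} and the precise shape of $\Tcal_d^{\rm reg}$ (as recorded in $(\ref{Qpcharacters})$) have to be combined. Together with quasi-finiteness, this yields finiteness of $\pi_{\bar\tau}$.
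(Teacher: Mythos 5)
Your overall strategy (build a moduli of trianguline $(\phi,\Gamma)$-modules over $\Tcal_d^{\rm reg}$, pass to the locus with residual pseudo-character $\bar\tau$, then prove finiteness via properness plus quasi-finiteness) matches the paper's, but several essential mechanisms are missing or misstated.

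First, the construction. You write that the main result of \cite{families} attaches a continuous pseudo-character to the universal trianguline module over all of $\Scal_d$. It does not: the universal $(\phi,\Gamma)$-module over $\Scal_d$ is not globally \'etale, and the theorem only produces a family of $\mathcal{G}_p$-representations (hence a pseudo-character) over an \emph{open} subspace $\Scal_d^{\rm adm}$. Restricting to this admissible locus is a necessary step, and the fact that the inclusion $\Scal_d^{\rm adm}\hookrightarrow\Scal_d$ is \emph{partially proper} is one of the two ingredients the paper needs for the valuative criterion. You also work with the framed/rigidified space $\Scal_d$ rather than the non-split space $\Scal_d^{\rm ns}$. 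The non-split condition is not cosmetic: it is exactly what makes $\Scal_d^{\rm ns}\rightarrow\Tcal_d^{\rm reg}$ a \emph{proper} fibration (iterated projective bundles $\mathbb{P}(\Mcal_d)$ rather than affine bundles), and it is what the injectivity argument actually uses --- the one-dimensionality of the space of candidate $\Fil_1$'s is a joint consequence of regularity of $\underline{\delta}$ \emph{and} of the filtration being nowhere split. Your injectivity paragraph invokes only regularity and appeals vaguely to the pseudo-character deformation space separating lifts, which is not how the argument closes.

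Second, and this is the substantive gap: your treatment of finiteness omits quasi-compactness entirely. In the adic/rigid setting, ``proper'' is ``partially proper'' (the valuative criterion) plus ``quasi-compact,'' and they are genuinely independent conditions. The valuative criterion part is in fact the easy half here: given a valuation ring of $\Scal^{\rm ns}(\bar\tau)$, one produces a center in $\Scal_d^{\rm ns}$ using properness of $\Scal_d^{\rm ns}\rightarrow\Tcal_d^{\rm reg}$, then checks it lies in the admissible locus using partial properness of $\Scal_d^{\rm ns,adm}\hookrightarrow\Scal_d^{\rm ns}$. Your version of the valuative criterion instead tries to extend a Galois lattice and a triangulation over a valuation ring directly; but a pseudo-character being integral does not produce a Galois lattice (this fails already for reducible representations), and ``regularity controls the formation of $\mathrm{Ext}^1$ sharply enough'' is precisely the step you flag as unclear --- there is no such extension statement to lean on. Meanwhile, the genuinely hard input --- that the preimage of an affinoid in $\Xfrak_{\bar\tau}\times\Tcal_d^{\rm reg}$ is quasi-compact --- requires a separate argument: the paper proves that the locus in a quasi-compact base where two families of $(\phi,\Gamma)$-modules, one of them \'etale, become pointwise isomorphic is itself quasi-compact, by comparing lattices up to bounded $p$-isogeny and slicing the resulting $\underline{\rm Isom}$-scheme. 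Without an argument of this type, the proof of finiteness does not go through.
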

\begin{rem}
(i) The morphism is in general not a closed embedding but indeed only finite and injective: The space $\Scal^{\rm ns}(\bar\tau)$ carries a family of Galois representations, while $\Xfrak_{\bar\tau}\times\Tcal_d^{\rm reg}$ carries a family of pseudo-characters. Hence the field extension $k(x)$ over $k(\pi_{\bar\tau}(x))$ is in general not the identity. \\
(ii) It seems that the assumption $p>d$ is not necessary. We make this assumption, as we have to use pseudo-characters. However, using the theory of \emph{determinants} developed by Chenevier in \cite{determinants} it should be possible to remove this assumption. 
\end{rem}

We write $X(\bar\tau)^{\rm reg}$ for the image of $\Scal^{\rm ns}(\bar\tau)$ and $X(\bar\tau)\subset \Xfrak_{\bar\tau}\times\Tcal^d$ for its Zariski-closure. This space is the \emph{local finite slope space}. We show that $X(\bar\tau)$ coincides with Kisin's definition of a finite slope space in the $2$-dimensional case. As an application of the construction, we find that the finite slope space has the following properties.

\begin{cor}
\noindent {\rm (i)} The connected components of $X(\bar\tau)$ are irreducible of dimension $1+\tfrac{d(d+1)}{2}=1+\dim B$, where $B\subset \GL_d$ is a Borel subgroup.\\
\noindent {\rm (ii)} Let $\tilde X(\bar\tau)$ denote the union of irreducible components of $X(\bar\tau)$ that contain at least one crystalline point. Then $\tilde X(\bar\tau)$ is the universal refined family\footnote{in the sense of Bellaiche and Chenevier. See also Definition $\ref{defnrefinedfamily}$.} of $\mathcal{G}_p$-representations. Especially, the crystalline representations are Zariski-dense and accumulation. 
\end{cor}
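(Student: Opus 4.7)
\emph{Plan for (i).} Since $\pi_{\bar\tau}\colon \Scal^{\rm ns}(\bar\tau)\to \Xfrak_{\bar\tau}\times\Tcal_d^{\rm reg}$ is finite and injective by the preceding theorem, the image $X(\bar\tau)^{\rm reg}$ has the same irreducible components and dimensions as $\Scal^{\rm ns}(\bar\tau)$. Each irreducible component of $X(\bar\tau)$ is then the Zariski-closure in $\Xfrak_{\bar\tau}\times\Tcal^d$ of an irreducible component of $X(\bar\tau)^{\rm reg}$, and irreducibility is preserved under closure; hence it suffices to show that $\Scal^{\rm ns}(\bar\tau)$ is smooth of dimension $1+d(d+1)/2$. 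At a point with parameters $\delta=(\delta_1,\ldots,\delta_d)\in\Tcal_d^{\rm reg}$, I would compute the tangent space as the space of trianguline deformations with fixed triangulation type: it decomposes into a $2d$-dimensional contribution from variations of $\delta$, plus a $d(d-1)/2$-dimensional contribution from extension classes in $\bigoplus_{i<j}H^1(\Rcal(\delta_i/\delta_j))$ (each summand being one-dimensional by the regularity condition), modulo the $(d-1)$-dimensional effective action of the filtration-preserving automorphism torus modulo its center. Summing yields $2d+d(d-1)/2-(d-1)=1+d(d+1)/2=1+\dim B$; smoothness follows from the vanishing of $H^2(\Rcal(\delta_i/\delta_j))$ under the same regularity hypothesis, and smoothness of a space forces its connected components to be irreducible.

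\emph{Plan for (ii).} Two claims must be established: that $\tilde X(\bar\tau)$ is a refined family in the sense of Definition \ref{defnrefinedfamily}, and that it is universal for this property with crystalline points Zariski-dense and accumulating. By construction, the family of trianguline $(\phi,\Gamma)$-modules on $\Scal^{\rm ns}(\bar\tau)$ descends under $\pi_{\bar\tau}$ to a family of trianguline $\mathcal{G}_p$-representations on $X(\bar\tau)^{\rm reg}$, and extends across the Zariski-closure to $\tilde X(\bar\tau)$ via the families-of-$(\phi,\Gamma)$-modules machinery of \cite{families}; at a crystalline point the parameters $\delta_i$ evaluated at $p$ recover a prescribed ordering of the crystalline Frobenius eigenvalues, i.e.\ a refinement in the sense of Bella\"iche--Chenevier. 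Zariski-density and accumulation of crystalline points then follow by combining the dimension formula of (i) with Chenevier's density theorem for trianguline families: each component of $\tilde X(\bar\tau)$ contains at least one crystalline point by definition, and Chenevier's density statement propagates this to Zariski-density and accumulation within each such component.

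\emph{Plan for universality.} Given any refined family $\Fcal$ over a rigid-analytic base $Y$, the trace of $\Fcal$ together with the refinement defines a morphism $Y\to\Xfrak_{\bar\tau}\times\Tcal^d$. At each crystalline point of $Y$ the image lies in $X(\bar\tau)^{\rm reg}$ by the defining property of $\Scal^{\rm ns}(\bar\tau)$, and the accumulation of crystalline points in $Y$ (built into the definition of a refined family) forces the entire image into $\tilde X(\bar\tau)$. The universal property of $\Scal^{\rm ns}(\bar\tau)$, combined with the finite injective map $\pi_{\bar\tau}$, then yields a unique factorization of $\Fcal$ through $\tilde X(\bar\tau)$, proving universality.

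\emph{Main obstacle.} The most delicate step is the smoothness and dimension computation for $\Scal^{\rm ns}(\bar\tau)$ in (i): one must verify uniformly over $\Tcal_d^{\rm reg}$ that $h^1(\Rcal(\delta_i/\delta_j))=1$ and $h^2(\Rcal(\delta_i/\delta_j))=0$ (precisely what the regularity condition is designed to ensure), together with constancy of the filtration-preserving automorphism dimension; only then do the tangent spaces have the correct dimension without jumping. A secondary subtlety in (ii) is ensuring that the refined family structure extends across the Zariski-closure without acquiring spurious components, which is exactly what is avoided by restricting to $\tilde X(\bar\tau)$.
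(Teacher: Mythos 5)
For part (i) you re-derive Chenevier's smoothness and relative dimension statement by a tangent-space computation; the paper simply cites \cite[Theorem 3.3]{Chenevier} together with the explicit description of $\Scal_d^{\rm ns}$ as an iterated projective bundle, and then adds $\dim \Tcal_d^{\rm reg}=2d$ to $(d-1)(d-2)/2$. Your bookkeeping ($2d + d(d-1)/2 - (d-1)$) is arithmetically equivalent, so this is the same argument in a hand-rolled form. One point you should not gloss over: irreducibility of the Zariski closure of each component of $X(\bar\tau)^{\rm reg}$ is not by itself enough for the claim that the connected components of $X(\bar\tau)$ are irreducible; one also needs that the closures of distinct components do not meet. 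The paper likewise leaves this implicit, so I would not call it a gap relative to the paper, but worth being aware of.

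For part (ii) there is a genuine error in the first sentence: you assert that the trianguline family of $(\phi,\Gamma)$-modules on $\Scal^{\rm ns}(\bar\tau)$ ``descends'' along $\pi_{\bar\tau}$ and ``extends across the Zariski-closure to $\tilde X(\bar\tau)$ via the families-of-$(\phi,\Gamma)$-modules machinery.'' Neither claim is true or needed. Since $\pi_{\bar\tau}$ is finite and injective but produces nontrivial residue field extensions, only the pseudo-character (trace) descends, not the Galois family; and no trianguline structure is produced on the closure — the finite slope space is not trianguline outside the regular locus. What the paper actually uses is much weaker: the pseudo-character $T_{\bar\tau}$ and the functions $\omega_i$, $\delta_i(p)$ are automatically defined on all of $X(\bar\tau)$ because they are pulled back from the ambient $\Xfrak_{\bar\tau}\times\Tcal^d$, and these data together with the Zariski-dense set $\Zcal$ are all that Definition \ref{defnrefinedfamily} requires. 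Your universality argument is then essentially correct in shape but misattributes the mechanism: the map $Y\to\Xfrak_{\bar\tau}\times\Tcal^d$ does not factor through some ``universal property of $\Scal^{\rm ns}(\bar\tau)$''; it factors through $\tilde X(\bar\tau)$ simply because $\tilde X(\bar\tau)$ is Zariski-closed, the crystalline points of $Y$ map into it by the regularity conditions (iii) and (iv), $Y$ is reduced with $Z$ Zariski-dense, and the lifting characters $\epsilon_n$ are unique by density. Replace the appeal to extension of the trianguline filtration and the universal property of $\Scal^{\rm ns}(\bar\tau)$ with this closedness-plus-density argument and the plan matches the paper.
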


Let $E$ be a finite extension of $\Q$ and $S$ a finite set of primes of $E$. Let $\mathcal{G}_{E,S}$ denote the Galois group of the maximal extension of $E$ that is unramified outside $S$ and let $\bar\tau:\mathcal{G}_{E,S}\rightarrow \Fbb$ be a pseudo-character. We assume that $p$ completely splits in $E$ and write $\bar\tau_p$ for the restriction of $\bar\tau$ to $\mathcal{G}_p={\rm Gal}(\bar\Q_p/\Q_p)$. Let $X(\bar\tau)$ be the base change of the local finite slope space $X(\bar\tau_p)$ along the canonical map
\[\Xfrak_{\bar\tau}\rightarrow \Xfrak_{\bar\tau_p}\]
between the generic fibers of the universal deformation rings of $\bar\tau$ resp. $\bar\tau_p$. We show that this space admits additional structures that are similar to the structures that one has for eigenvarieties for forms of $\GL_d$. Especially there is an action of an unramified Hecke-algebra $\Hcal_{\rm ur}$ and of the Atkin-Lehner ring $\mathcal{A}_p$. 

To compare our finite slope spaces with eigenvarieties, we follow closely the exposition in the book \cite{BellaicheChenevier} of Bellaiche and Chenevier.
Assume that $E$ is an imaginary quadratic extension of $\Q$ with norm $N:E\rightarrow \Q$. Let $G$ be the definite unitary group attached to the hermitian form $(x_1,\dots, x_d)\mapsto \sum_i N(x_i)$.
We assume that $p$ splits in $E$ and hence $G$ is split at $p$. 
Let $Y$ denote the \emph{eigenvariety} (compare Definition $\ref{defneigenvar}$) associated to a certain set $\Zcal$ (see $(\ref{refinedautomorphic})$) of automorphic representations of $G(\Abb)$. 
\begin{theo}
There is a closed embedding $Y(\bar\tau)\rightarrow X(\bar\tau)$ of the $\bar\tau$-part of the eigenvariety into the finite slope space $X(\bar\tau)$. 
\end{theo}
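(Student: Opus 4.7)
The strategy is to construct a natural morphism $Y(\bar\tau)\to\Xfrak_{\bar\tau}\times\Tcal^d$, to show that its image lies in the closed subspace $X(\bar\tau)$, and to verify that the resulting morphism $Y(\bar\tau)\to X(\bar\tau)$ is a closed immersion. The eigenvariety $Y(\bar\tau)$ carries a family of continuous pseudo-characters $\tau^{\rm univ}:\mathcal{G}_{E,S}\to\Ocal(Y(\bar\tau))$ reducing to $\bar\tau$, obtained by combining the construction of Galois representations attached to automorphic forms on the definite unitary group $G$ (Harris--Taylor, Chenevier) with Chebotarev density applied to the unramified Hecke eigenvalues. On the other hand, the weight morphism and the eigenvalues of the Atkin--Lehner operators in $\mathcal{A}_p$ provide $d$ continuous characters $\delta_1,\dots,\delta_d:\Q_p^\times\to\Ocal(Y(\bar\tau))^\times$, i.e.\ a morphism $Y(\bar\tau)\to\Tcal^d$ which lands in $\Tcal_d^{\rm reg}$ on a Zariski-open dense subset. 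Combined, these define the desired map to $\Xfrak_{\bar\tau}\times\Tcal^d$.

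To see that the image lies in $X(\bar\tau)$, I would use that, by the construction of $Y$ as a refined family in the sense of Bellaiche--Chenevier, the classical non-critically refined points are Zariski-dense and accumulating in $Y(\bar\tau)$. At such a point $y$, the restriction $\tau^{\rm univ}_y|_{\mathcal{G}_p}$ is crystalline with Hodge--Tate weights and ordering of Frobenius eigenvalues determined by $(\delta_1,\dots,\delta_d)_y\in\Tcal_d^{\rm reg}$, and is therefore trianguline with these regular parameters by Colmez \cite{Colmez}. Thus $y$ lies in the image of $\Scal^{\rm ns}(\bar\tau_p)$, i.e.\ in $X(\bar\tau_p)^{\rm reg}$, and after the base change along $\Xfrak_{\bar\tau}\to\Xfrak_{\bar\tau_p}$ in $X(\bar\tau)^{\rm reg}\subset X(\bar\tau)$. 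Since $X(\bar\tau)$ is Zariski-closed in $\Xfrak_{\bar\tau}\times\Tcal^d$ and these classical points are Zariski-dense in $Y(\bar\tau)$, the whole map factors through $X(\bar\tau)$.

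For the closed-immersion property it suffices to show that $Y(\bar\tau)\to\Xfrak_{\bar\tau}\times\Tcal^d$ itself is a closed immersion, as the target is closed in $\Xfrak_{\bar\tau}\times\Tcal^d$. By the construction of eigenvarieties, $Y(\bar\tau)$ embeds as a closed subspace of a suitable open in the product of the weight space and the spectrum of $\Hcal_{\rm ur}\otimes\mathcal{A}_p$. All these coordinates are recovered from the pair $(\tau^{\rm univ},\delta_1,\dots,\delta_d)$: unramified Hecke eigenvalues via traces of $\tau^{\rm univ}$ at Frobenius elements (Chebotarev density), the weight via the Hodge--Tate--Sen characters of the $\delta_i$, and the $\mathcal{A}_p$-eigenvalues via the values $\delta_i(p)$. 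Combined with multiplicity one for $G(\Abb)$, this gives both injectivity on points and surjectivity on the level of structure sheaves.

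The main obstacle is this last step. It requires that the pseudo-character family $\tau^{\rm univ}$, constructed globally by interpolating classical Galois representations, be expressible as an actual pullback of the universal trianguline family on $X(\bar\tau)$ at every point, not only at classical ones. This amounts to upgrading the refined family structure on $Y(\bar\tau)$ in a functorial way by combining the Harris--Taylor--Chenevier construction of Galois representations for $G$ with the author's theory of families of $(\phi,\Gamma)$-modules from \cite{families} and \cite{KedlayaLiu}, together with a density argument to cover critically refined classical points and the locus at the boundary of $\Tcal_d^{\rm reg}$, where the triangulation parameters may degenerate.
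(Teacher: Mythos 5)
Your overall strategy mirrors the paper's: build a morphism to $\Xfrak_{\bar\tau}\times\Tcal^d$ from the pseudo-character plus the characters coming from the weight and Atkin--Lehner data, use density of classical points to land in $X(\bar\tau)$, and then prove the map is a closed immersion by recovering the eigenvariety coordinates from the finite slope space. But you correctly flag the crucial gap yourself in your final paragraph, and the paper has a precise tool to close it: Corollary \ref{universalrefined}, which establishes that $\tilde X(\bar\tau_p)$ is the \emph{universal} refined family with residual pseudo-character $\bar\tau_p$. Combined with \cite[Proposition 7.5.13]{BellaicheChenevier}, which says that the Galois family on $Y(\bar\tau)$ restricted to $\mathcal{G}_p$ is a refined family, the universal property gives the morphism $Y(\bar\tau)\to X(\bar\tau_p)$, and hence $f: Y(\bar\tau)\to X(\bar\tau)$, directly and for free — the extension of the triangulation parameters beyond classical points, and the handling of critically refined points and of the boundary of $\Tcal_d^{\rm reg}$, are all packaged into that universal property. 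Your plan to redo this by a bare density argument is morally the same but would need to reproduce the content of Corollary \ref{universalrefined}; absent that, the factorization on the level of structure sheaves (not just topological spaces) is not established.

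For the closed-immersion step your idea of recovering all eigenvariety coordinates from $(\tau^{\rm univ},\delta_\bullet)$ is right, but the invocation of multiplicity one for $G(\Abb)$ is a red herring and not used. The paper's argument is purely formal: letting $\nu_1 = (\omega, \psi_p^{\rm gal}(u_0)^{-1}): X(\bar\tau)\to\Wcal^d\times\Gbb_m$, the eigenvariety axiom (ii) says $\psi\otimes\nu^\sharp$ is surjective onto $\Gamma(\nu^{-1}(V),\Ocal_{Y(\bar\tau)})$, and this map factors through $\psi^{\rm gal}\otimes\nu_1^\sharp$ followed by $f^\sharp$; therefore $f^\sharp$ is surjective on the relevant global sections. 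Since $\nu_1^{-1}(V)$ is a quasi-Stein space and $\nu^{-1}(V)$ is affinoid, surjectivity on global sections gives the closed immersion. You should replace the "multiplicity one plus injectivity on points" reasoning with this commutative-diagram argument, which is both simpler and what the surjectivity in the eigenvariety axioms is there to provide.
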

This result has the following consequences:
\begin{cor} Let $\bar\rho:\mathcal{G}_{E,S}\rightarrow \GL_d(\Fbb)$ be an absolutely irreducible representation with coefficients in a finite extension of $\Fbb_p$ and $p>d$. \\
\noindent {\rm (i)} The Galois-representations associated to the $p$-adic automorphic forms on the $\bar\rho$-part $Y(\bar\rho)$ of the eigenvariety $Y$ are trianguline when restricted to $\mathcal{G}_p$. \\
\noindent {\rm (ii)} Let $Z\subset Y(\bar\rho)$ be an irreducible component such that there exists a point $z\in Z$ such that the $\mathcal{G}_{E,S}$-representation $\rho_z$ at $z$ is absolutely irreducible when restricted to $\mathcal{G}_p$ .Then the family of $\mathcal{G}_{E,S}$-representations on $Y(\bar\rho)$ is a trianguline family over an Zariski-open dense subset of $Z$.
\end{cor}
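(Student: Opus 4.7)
Both parts follow by combining the closed embedding $Y(\bar\rho)\hookrightarrow X(\bar\rho)$ from the preceding theorem with the description of $\tilde X(\bar\rho)$ as the universal refined family of $\mathcal{G}_p$-representations in the Corollary preceding the theorem.

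For \emph{(i)}: every irreducible component of the eigenvariety $Y(\bar\rho)$ contains a Zariski-dense set of classical points (a standard feature of eigenvarieties), and at each such point the associated Galois representation is crystalline at $p$ with refinement prescribed by the Hecke eigenvalues; under the closed embedding these classical points map to crystalline points of $X(\bar\rho)$. Hence each irreducible component of $Y(\bar\rho)$ lies in an irreducible component of $X(\bar\rho)$ containing a crystalline point, so the image of $Y(\bar\rho)$ is contained in $\tilde X(\bar\rho)$. Via the finite injective map $\pi_{\bar\rho_p}\colon\Scal^{\rm ns}(\bar\rho_p)\to X(\bar\rho_p)$, $\tilde X(\bar\rho)$ is set-theoretically the image of the trianguline locus $\Scal^{\rm ns}(\bar\rho_p)$ (the image of a finite map is closed and contains the Zariski-dense crystalline locus); absolute irreducibility of the global $\bar\rho$ lifts the pseudo-character on $\Xfrak_{\bar\rho}$ to a genuine family of $\mathcal{G}_{E,S}$-representations on $Y(\bar\rho)$, and transferring the trianguline $(\phi,\Gamma)$-module from $\Scal^{\rm ns}(\bar\rho_p)$ at each point shows that $\rho_y|_{\mathcal{G}_p}$ is trianguline for every $y\in Y(\bar\rho)$.

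For \emph{(ii)}: I would upgrade this pointwise statement to a trianguline family on a dense open $U\subset Z$. Absolute irreducibility of $\rho|_{\mathcal{G}_p}$ is an open condition on $Z$ (semicontinuity of irreducibility for families of $(\phi,\Gamma)$-modules, as used in \cite{families}), so the hypothesis at $z$ provides a non-empty open neighborhood $U\subset Z$ on which $\rho|_{\mathcal{G}_p}$ is pointwise absolutely irreducible; irreducibility of $Z$ makes $U$ Zariski-dense. On this locus, absolute irreducibility eliminates the pseudo-character ambiguity of the Remark after Theorem~1.1 (by Rouquier, an absolutely irreducible representation is determined up to isomorphism by its trace), so $\pi_{\bar\rho_p}$ is an isomorphism onto its image over $U$, and the composite $U\hookrightarrow Y(\bar\rho)\hookrightarrow X(\bar\rho)$ lifts canonically to a morphism $U\to \Scal^{\rm ns}(\bar\rho_p)$. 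Pulling back the universal trianguline $(\phi,\Gamma)$-module family from $\Scal^{\rm ns}(\bar\rho_p)$ along this morphism yields the desired trianguline family on $U$.

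The main obstacle is the upgrade step in \emph{(ii)}: passing from a pointwise trianguline family to a family carrying an actual triangulation (a filtration of the $(\phi,\Gamma)$-module that varies in the parameter space) requires a \emph{canonical} lift along $\pi_{\bar\rho_p}$. This lift fails in general because $\pi_{\bar\rho_p}$ has nontrivial residue field extensions at points where the pseudo-character admits several non-isomorphic reductions to a representation. The absolute irreducibility assumption in \emph{(ii)} is precisely what kills this obstruction on a dense open subset, and there is nothing more to do once that rigidity is in place.
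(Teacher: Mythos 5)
Your argument for part (ii) runs essentially parallel to the paper's proof of its Lemma on trianguline $\mathcal{G}_p$-families and the ensuing Corollary: on the Zariski-open locus $U\subset Z$ where the restriction of the pseudo-character to $\mathcal{G}_p$ is absolutely irreducible, the finite injective $\pi_{\bar\tau_p}$ becomes \'etale (deforming a representation is the same as deforming its trace), and the global representation over $Y(\bar\rho)$ supplies a section, giving a lift along which the trianguline data pulls back. You invoke Rouquier's rigidity directly where the paper uses a Chebotarev-density argument to see that the $\mathcal{G}_{E,S}$-representation has coefficients in $k(x)$ (the unramified Frobenii do, and they are dense); these are two routes to the same splitting of the cover. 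One small correction: the lift lands a priori in $\Scal(\bar\tau_p)$, the normalization of $X(\bar\tau_p)$ in $\Scal^{\rm ns}(\bar\tau_p)$, not directly in $\Scal^{\rm ns}(\bar\tau_p)$, since the image of $U$ need not lie in the regular locus.

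The genuine gap is in part (i). You claim that $\tilde X(\bar\rho)$ is set-theoretically the image of $\Scal^{\rm ns}(\bar\rho_p)$, arguing that the image of a finite map is closed and contains the Zariski-dense crystalline locus. This is false as stated: the finite injective $\pi_{\bar\tau_p}$ has target $\Xfrak_{\bar\tau_p}\times\Tcal_d^{\rm reg}$, and $\Tcal_d^{\rm reg}$ is a Zariski-\emph{open} subspace of $\Tcal^d$. The image $X(\bar\tau_p)^{\rm reg}$ is closed in $\Xfrak_{\bar\tau_p}\times\Tcal_d^{\rm reg}$ but emphatically not in $\Xfrak_{\bar\tau_p}\times\Tcal^d$; the local finite slope space $X(\bar\tau_p)$ is \emph{defined} as the Zariski closure of $X(\bar\tau_p)^{\rm reg}$ in the larger product, and in general acquires boundary points with non-regular parameter $\underline{\delta}$. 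For a point $y\in Y(\bar\rho)$ mapping to such a boundary point there is no point of $\Scal^{\rm ns}(\bar\rho_p)$ above it, and your transfer argument has nothing to transfer. The paper closes exactly this gap by specializing a triangulation from the dense, regular, absolutely irreducible locus of a component $Z$ to the boundary using the properness of the flag variety; this is also why even the pointwise conclusion of (i) uses the hypothesis that each component has a point whose $\mathcal{G}_p$-restriction is absolutely irreducible. You should add this specialization step (or import the hypothesis from part (ii)) to make (i) correct.
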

After the writing of this paper was finished the author was informed that R. Liu \cite{Liu} has obtained similar results as in the first part of this paper, using different methods.

{\bf Acknowledgements:}
I thank G. Chenevier for a conversation which was the starting point of this paper and for his remarks on the first version of this paper. Further I thank M. Rapoport, T. Richarz and P. Scholze for helpful conversations.  
The author was partially supported by the SFB TR 45 of the DFG (German Research Foundation).

\section{Preliminaries}
We recall some definitions and results of our paper \cite{families} as well as some results from Chenevier's paper \cite{Chenevier}.

\subsection{Families of $p$-adic Galois representations and pseudo-characters}

Throughout the first part of the paper we work over the base filed $K=\Q_p$ and write $\mathcal{G}_p={\rm Gal}(\bar\Q_p/\Q_p)$ for its absolute Galois group. Let $X$ be a reduced adic space locally of finite type over $\Q_p$. A family of $\mathcal{G}_p$-representations on $X$ is  vector bundle $\Vcal$ on $X$ which is endowed with a continuous $\Ocal_X$-linear $\mathcal{G}_p$-action. 

For $0\leq r\leq s<1$ let \[\boldB_{[r,s]}=\Spa(\Q_p\langle p^{-s}T, p^r T^{-1}\rangle,\Z_p\langle p^{-s}T, p^r T^{-1}\rangle)\]
denote the closed annulus of inner radius $r$ and outer radius $s$ over $\Q_p$. 
We write \[\Rcal_X=\Bcal_{X,\rig}^\dagger=\lim \limits_{\longrightarrow r} \lim\limits_{\longleftarrow s} \Gamma(X\times \boldB_{[r,s]},\Ocal_{X\times \boldB_{[r,s]}})\] for the relative Robba ring on $X$ which is endowed with an $\Ocal_X$-linear endomorphism $\phi$ and a continuous $\Ocal_X$-linear action of the pro-cyclic group $\Gamma\cong\Z_p^\times$, see \cite[2]{families}. A family of $(\phi,\Gamma)$-modules on $X$ is an $\Rcal_X$-module $D$ which is locally on $X$ free of finite rank over $\Rcal_X$  together with a $\phi$-linear automorphism $\Phi:\phi^\ast D\rightarrow D$ and a semi-linear action of $\Gamma$  commuting with $\Phi$. Recall further that a $(\phi,\Gamma)$-module $D$ over $\Rcal_X$ is called \'etale, if it admits a lattice over the integral subring 
\[\Rcal_X^{\rm int}=\Acal_{X}^\dagger=\lim \limits_{\longrightarrow r} \lim\limits_{\longleftarrow s} \Gamma(X\times \boldB_{[r,s]},\Ocal^+_{X\times \boldB_{[r,s]}}),\]
see also \cite[4]{families} for the definitions. 
By the work of Berger and Colmez \cite{BergerColmez} there is a fully faithful functor
\[\Vcal\longmapsto {\bf D}_{\rm rig}^\dagger(\Vcal)\] from the category of families of $\mathcal{G}_p$-representations on $X$ to the category of families of $(\phi,\Gamma)$-Modules on $X$.  Given a family $D$ of $(\phi,\Gamma)$-modules on $X$, the main result of \cite{families} shows that one can construct a natural open subspace of $X$ over which $D$ is induced by a family of $\mathcal{G}_p$-representations:

\begin{theo}\label{maintheofamilies}
Let $X$ be a reduced adic space locally of finite type over $\Q_p$ and let $\Ncal$ be a family of $(\phi,\Gamma)$-modules over the relative Robba ring $\Rcal_X$.\\
\noindent {\rm (i)} There is a natural open and partially proper subspace $X^{\rm adm}\subset X$ and a family $\Vcal$ of $\mathcal{G}_p$-representations on $X^{\rm adm}$ such that 
\[\Ncal|_{X^{\rm adm}}={\bf D}_{\rig}^\dagger(\Vcal).\]
\noindent {\rm (ii)} The formation $(X,\Ncal)\mapsto (X^{\rm adm},\Vcal)$ is compatible with base change in $X$ and $X=X^{\rm adm}$ whenever the family $\Ncal$ comes from a family of Galois representations. 
\end{theo}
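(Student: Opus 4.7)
The plan is to characterize $X^{\rm adm}$ pointwise, establish its openness and partial properness, and then build $\Vcal$ locally from integral lattices via the Berger--Colmez equivalence.

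I would define $X^{\rm adm}$ as the set of $x\in X$ such that the specialization $\Ncal_x$, viewed as a $(\phi,\Gamma)$-module over the Robba ring $\Rcal_{k(x)}$, is \'etale. By Kedlaya's slope filtration theorem, this is equivalent to all Newton slopes of $\Ncal_x$ being $0$, equivalently to the existence of a $\phi$-stable lattice over $\Rcal_{k(x)}^{\rm int}$ with Frobenius acting as an isomorphism. The characterization is intrinsic to the fiber and so base-change compatibility will be automatic; also, if $\Ncal={\bf D}^\dagger_{\rm rig}(\Vcal)$ comes from a family of $\mathcal{G}_p$-representations, then every fiber is automatically \'etale (a lattice inside $\Vcal_x$ gives a lattice inside $\Ncal_x$), so $X=X^{\rm adm}$, giving the last part of (ii).

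The heart of the argument, and the main obstacle, is showing that $X^{\rm adm}$ is open and partially proper. Locally on $X$, one can spread $\Ncal$ out to a $\phi$-module over a bounded annulus ring $\Bcal^\dagger_{X,[r,s]}$, and one has to transport the Kedlaya--Liu slope theory into this relative setting. The key point is semicontinuity: I would prove, by induction on the rank, that the largest Newton slope of $\Ncal_x$ (and more generally the Harder--Narasimhan polygon) is upper semicontinuous in $x$. Granting this, the locus where all slopes equal $0$ is the intersection of two semicontinuity conditions (the largest slope is $\le 0$, and the sum of all slopes, i.e.\ the slope of $\dete \Ncal$, is $\ge 0$), hence open. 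Partial properness means that if a map $\Spa R \to X$ from a rank--$2$ affinoid point sends the generic point into $X^{\rm adm}$, then it factors through $X^{\rm adm}$; this reduces, via the slope characterization, to the fact that \'etaleness of a $(\phi,\Gamma)$-module over the Robba ring is preserved under passage to an analytic specialization, which can be checked using the invariance of the Newton polygon under such specializations.

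Once $X^{\rm adm}$ is open and partially proper, I would construct $\Vcal$ locally by descending the \'etale lattice. Shrinking to a quasi-compact open of $X^{\rm adm}$, one produces, by gluing the fiberwise lattices using that the functor from lattices to fibers is fully faithful with closed image in the relevant sense, a $\phi$-stable $\Rcal^{\rm int}_{X^{\rm adm}}$-lattice $\Ncal^{\rm int}\subset \Ncal|_{X^{\rm adm}}$, and then applies the Berger--Colmez equivalence in families (as developed in \cite{BergerColmez} and extended in \cite{families}) to produce a family of $\mathcal{G}_p$-representations $\Vcal$ with ${\bf D}^\dagger_{\rm rig}(\Vcal)=\Ncal|_{X^{\rm adm}}$. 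Uniqueness of $\Vcal$ and the pointwise nature of the construction imply that the formation $(X,\Ncal)\mapsto (X^{\rm adm},\Vcal)$ commutes with base change, completing (ii). The hardest step, by far, is the semicontinuity of Newton slopes in families over adic spaces, and this is precisely where one has to invoke (and refine) the results of Kedlaya--Liu.
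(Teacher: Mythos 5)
This theorem is not proved in the present paper: it is quoted as the main result of the author's earlier work \cite{families}, which in turn builds on Kedlaya--Liu \cite{KedlayaLiu} and Berger--Colmez \cite{BergerColmez}. So the comparison here is with the proof in that reference.

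Your high-level outline (fiberwise \'etale locus, slope semicontinuity for openness, specialization stability for partial properness, then produce $\Vcal$ from a family of integral lattices via Berger--Colmez) matches the general shape of the actual argument, but there are two genuine gaps. First, the construction of a \emph{global family} of \'etale lattices over $X^{\rm adm}$ is the real content of the theorem, and your proposed step --- ``gluing the fiberwise lattices using that the functor from lattices to fibers is fully faithful with closed image'' --- is not a valid argument. An \'etale lattice in a single fiber $\Ncal_x$ is far from unique (any $\phi$-stable lattice of the right type works), so there is no canonical choice to glue, and full faithfulness of a functor from families of lattices to collections of fibers does not by itself produce an object in the source. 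Kedlaya--Liu themselves only obtain, near an \'etale fiber, a family of Galois representations after a \emph{finite cover} of the base, and even then only an injection $\mathbf{D}^\dagger_{\rm rig}(\Vcal)\hookrightarrow\Ncal$ that need not be an isomorphism away from the given point; removing these defects over adic spaces is precisely what \cite{families} does, and it requires a substantive argument that your proposal replaces with a phrase.

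Second, the semicontinuity and specialization-stability claims are stated for arbitrary adic points but justified as if one were working with rigid-analytic (maximal) points. For a higher-rank point $x$, the residue field $k(x)$ is not a complete discretely valued field, so ``$\Ncal_x$ is \'etale'' and its Newton polygon already need care to define (one must invoke Kedlaya's slope theory over more general bases \cite{Kedlaya}), and the openness/partial properness of the locus so defined is exactly what forces \cite{families} to argue via formal models and a valuative description of $X^{\rm adm}$, rather than via a direct semicontinuity statement on the adic space. Relatedly, your final sentence inverts the difficulty: once one has the family of lattices, passing to $\Vcal$ is the comparatively soft Sen-theoretic step from Berger--Colmez; it is the existence and openness/partial-properness of $X^{\rm adm}$ \emph{together with} the lattice construction over it that constitutes the theorem.

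Finally, the base-change and ``$X=X^{\rm adm}$ for \'etale input'' parts of (ii) are fine once one grants the lattice construction and its functoriality, but ``uniqueness of $\Vcal$'' should be flagged explicitly as a consequence of full faithfulness of $\mathbf{D}^\dagger_{\rm rig}$ rather than taken as given.
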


We further will make use of families of \emph{pseudo-characters}, see \cite[1]{BellaicheChenevier} and the references cited there for example. For an adic space $X$ (locally of finite type over $\Q_p$) and a (continuous) map 
$T:\mathcal{G}_p\rightarrow \Ocal_X$ such that $T(xy)=T(yx)$ for all $x,y\in \mathcal{G}_p$ and $\sigma\in\mathfrak{S}_m$, we define $T^\sigma$ as follows: If $\sigma=(j_1,\dots, j_m)\in\mathfrak{S}_m$ is a cycle and $x=(x_1,\dots,x_m)\in A^m$, then $T^\sigma(x)=T(x_{j_1}\cdots x_{j_m})$  and if $\sigma=\sigma_1\cdots\sigma_r$ is a product of cycles, then $T^\sigma=\prod T^{\sigma_i}$. Further we set $S_m(T)=\sum_{\sigma\in \mathfrak{S}_m}\varepsilon(\sigma)T^\sigma$. Then the map $T$ is called a pseudo-character if there exists an integer $d$ such that $S_{d+1}(T)=0$. The smallest integer $d$ such that this is the case is called the dimension of the pseudo-character $T$.

The theory of pseudo-characters of dimension $d$ is well behaved if $d!$ is invertible. As we need to reduce our pseudo-characters and representations modulo $p$ we always assume that $p>d$. The following proposition summarizes the facts on pseudo-characters that will be needed in the sequel.
 \begin{prop} Let $G$ be a group and $A$ a ring.\\
 \noindent {\rm (i)} Let $\rho: G\rightarrow \GL_d(A)$ be a representation. Then ${\rm tr}\,\rho$ is a pseudo-character. \\
 \noindent {\rm (ii)} If $A$ is an algebraically closed field such that $d!\in A^\times$, then $\rho\mapsto {\rm tr}\,\rho$ induces a bijection between semi-simple representations on $d$-dimensional $A$-vector spaces and $A$-valued pseudo-chararacters of dimension $d$. If $G$ is pro-finite and $A=\bar\Fbb_p$ or $A=\bar\Q_p$, then $\rho$ is continuous if and only if ${\rm tr}\,\rho$ is continuous. \\
 \noindent {\rm (iii)} Let $\Fbb$ be a finite field of characteristic $p$ and $G$ a pro-finite group satisfying Mazur's $p$-finiteness condition. If $\bar\tau: G\rightarrow \Fbb$ is a pseudo-character, then the deformation functor of $\bar\tau$  is pro-representable by a complete local Noetherian $W(\Fbb)$-algebra $R_{\bar\tau}$.\\
 \noindent {\rm (iv)} If $\bar\rho:G\rightarrow \GL_d(\Fbb)$ is absolutely irreducible, then $R_{{\rm tr}\,\bar\rho}$ pro-represents the deformation functor of $\bar\rho$. 
 \end{prop}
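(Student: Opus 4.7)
The plan is to assemble standard facts on pseudo-characters, due essentially to Taylor, Rouquier, Nyssen, and Mazur, with the continuity assertion in (ii) going back to Chenevier; a convenient reference collecting these results is Chapter 1 of Bella\"iche--Chenevier's book.

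For (i), I would establish $S_{d+1}({\rm tr}\,\rho)=0$ as a universal polynomial identity on $d\times d$ matrices, using its equivalence to $\wedge^{d+1}V=0$ for $\dim V=d$: interpreting ${\rm tr}(\rho(g_{j_1})\cdots \rho(g_{j_m}))$ as the trace of a cyclic endomorphism of $V^{\otimes m}$ and antisymmetrizing over $\mathfrak{S}_{d+1}$ recovers the trace of the antisymmetrization projector on $V^{\otimes(d+1)}$, which vanishes.

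For (ii), I would prove injectivity of $\rho\mapsto{\rm tr}\,\rho$ on semisimple $\rho$ via the Brauer--Nesbitt theorem, valid because $d!\in A^\times$ guarantees linear independence of the relevant irreducible characters. For surjectivity I would follow the Taylor--Rouquier--Nyssen reconstruction: extend $T$ $A$-linearly to $A[G]$, form the two-sided ideal $I=\{a\in A[G] : T(ab)=0\ \text{for all}\ b\in A[G]\}$, show that the quotient $A[G]/I$ is a semisimple $A$-algebra of dimension at most $d^2$, and read off $\rho$ from its Wedderburn decomposition. The continuity part, for pro-finite $G$ and $A=\bar\Fbb_p$ or $\bar\Q_p$, follows because the image of $T$ lies in the ring of integers of a finite extension and $\rho$ is reconstructed from $T$ by algebraic operations, so continuity of $T$ translates to continuity of the semisimplification of $\rho$. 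I expect the main obstacle of the whole proposition to be exactly this reconstruction --- specifically the construction of the orthogonal idempotents in $A[G]/I$ that cut out the simple summands --- which is where the hypothesis $d!\in A^\times$ is indispensable.

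For (iii), I would verify pro-representability via Schlessinger's criteria: the tangent space consists of pseudo-characters $G\to \Fbb[\epsilon]/(\epsilon^2)$ reducing to $\bar\tau$ modulo $\epsilon$, which is finite-dimensional by Mazur's $p$-finiteness condition, and the remaining axioms follow because the relation $S_{d+1}(T)=0$ is polynomial and hence compatible with fibre products of Artinian rings. For (iv), absolute irreducibility of $\bar\rho$ combined with (ii) shows that $\bar\rho$ is determined up to isomorphism by its trace, and Carayol's lemma will ensure that every deformation of ${\rm tr}\,\bar\rho$ to an Artinian $W(\Fbb)$-algebra lifts uniquely (up to strict equivalence) to a deformation of $\bar\rho$; this identifies the two deformation functors, and hence their representing rings.
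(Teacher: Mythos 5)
The paper offers no proof of this proposition at all: it is explicitly introduced as a summary of known facts (``The following proposition summarizes the facts on pseudo-characters that will be needed in the sequel''), with a pointer to \cite[1]{BellaicheChenevier} and the references therein. So there is no ``paper's own proof'' to compare against, and the useful question is simply whether your sketch correctly assembles the standard arguments.

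It does. For (i), the reduction of $S_{d+1}(\mathrm{tr}\,\rho)=0$ to a universal polynomial identity on $d\times d$ matrices, proved by interpreting the antisymmetrized sum as the trace of the projector onto $\wedge^{d+1}V=0$, is exactly the Frobenius/Procesi argument and is correct. For (ii), Brauer--Nesbitt plus the Taylor--Rouquier--Nyssen reconstruction via $A[G]/\ker T$ is the standard route, and you correctly locate where $d!\in A^\times$ is essential (building the orthogonal idempotents in the quotient algebra). The only place you are a bit loose is the continuity assertion: saying ``$\rho$ is reconstructed from $T$ by algebraic operations'' is the right intuition, but the actual argument (Chenevier) first uses compactness of $G$ to get a model of $T$ over a complete DVR with finite residue field, then shows the reconstructed $\rho$ lands in $\GL_d$ of a finite extension and that the algebraic reconstruction is compatible with the topologies; you should flag that step rather than wave at it. Parts (iii) and (iv) via Schlessinger's criteria and the Nyssen--Rouquier (a.k.a.\ Carayol) comparison of trace deformations with representation deformations for absolutely irreducible $\bar\rho$ are correct and standard. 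Overall the proposal is a sound account of where the proposition comes from, even though the paper itself simply cites it.
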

 
 As $\mathcal{G}_p$ is compact, a family of continuous pseudo-characters $T:\mathcal{G}_p\rightarrow \Ocal_X$ actually factors over $\Ocal_X^+\subset \Ocal_X$. It is then obvious that the reduction modulo $p$, i.e. the composition
\begin{equation}\label{reductionmodp}
\mathcal{G}_p\longrightarrow \Ocal_X^+\longrightarrow \Ocal_X^+/\Ocal_X^{++},\end{equation}
is locally constant on $X$, as it is continuous and $\Ocal_X^+/\Ocal_X^{++}$ is discrete. Here $\Ocal_X^{++}\subset \Ocal_X^+$ denotes the ideal of topologically nilpotent elements. 

If $\bar\tau: \mathcal{G}_p\rightarrow \Fbb$ is a pseudo-character with values in a finite field $\Fbb$, we write $R_{\bar\tau}$ for the universal deformation ring of $\bar\tau$ with universal pseudo-character $\tau$. Further we write $\Xfrak_{\bar\tau}$ for the generic fiber of $\Spf R_{\bar\tau}$. 

For later applications we introduce the following variants: 
We write $\tilde\Dcal_{\bar\tau}^\square$ for the functor 
\[\tilde\Dcal_{\bar\tau}^\square(S)=\left \{(f,\rho)\left|  
\begin{array}{*{20}c}
f:S\rightarrow \Spf\, R_{\bar\tau}\ \text{and a continuous representation}\\   \rho:\mathcal{G}_p\rightarrow \GL_d(\Ocal_S)\ \text{such that}\ {\rm tr}\,\rho=f^\ast \tau
\end{array}
\right.\right\}\]
on the category ${\rm Nil}_{W(\Fbb)}$ of  $W(\Fbb)$-schemes\footnote{Here the morphism $f:S\rightarrow \Spf R_{\bar\tau}$ is a morphism of functors on the category ${\rm Nil}_{W(\Fbb)}$.} $S$ such that $p$ is locally on $S$ nilpotent in the structure sheaf $\Ocal_S$. 
Further we let $\Dcal_{\bar\tau}^\square$ denote the quotient of $\tilde\Dcal_{\bar\tau}^\square$ which is obtained by forgetting the $\mathcal{G}_p$ action on $\Ocal_S^d$. These functors are pro-representable by formal schemes $\tilde\Xcal_{\bar\tau}^\square$ and $\Xcal_{\bar\tau}^\square$ such that $\Xcal_{\bar\tau}^\square\rightarrow \Spf R_{\bar\tau}$ is a $\hat{\GL}_d$-torsor, and $\tilde\Xcal_{\bar\tau}^\square\rightarrow \Xcal_{\bar\tau}^\square$ is formally of finite type. 
We write $\Xfrak_{\bar\tau}^\square$ and $\tilde\Xfrak_{\bar\tau}^\square$ for the generic fibers of these formal schemes. Especially it follows from the above that $\tilde\Xfrak_{\bar\tau}^\square\rightarrow \Xfrak_{\bar\tau}$ is of finite type.

\subsection{Families of trianguline $(\phi,\Gamma)$-modules}
In \cite{Chenevier}, Chenevier has defined the notion of a family of \emph{trianguline} $(\phi,\Gamma)$-modules generalizing the definition of a trianguline $(\phi,\Gamma)$-module, see \cite{Berger} for a survey. Let us briefly recall his definitions. 

Let $X$ be an adic space locally of finite type and let 
\[\delta:\Q_p^\times \longrightarrow \Gamma(X,\Ocal_X^\times)\]
be a continuous character. Then we write $\Rcal_X(\delta)=\Rcal_Xe_\delta$ for the family of rank one $(\phi,\Gamma)$-modules over $X$, on which $\Gamma$ acts via $\delta|_{\Z_p^\times}$ and $\Phi$ acts via $\delta(p)$, i.e. $\Phi(e_\delta)=\delta(p)e_\delta$ and $\gamma.e_\delta=\delta(\gamma)e_\delta$.
We write $\Tcal$ for the space of all continuous characters of $\Q_p^\times$, i.e. 
\begin{equation}\label{Qpcharacters}
\Tcal(A)=\{\text{continuous characters}\ \Q_p^\times\longrightarrow A^\times\}
\end{equation}
for an affinoid algebra $A$. This space is a product $\Tcal=\Wcal\times \Gbb_m$, where $\Wcal$ is the space of continuous characters of $\Z_p^\times$.

Further $\Tcal^{\rm reg}\subset \Tcal$ denotes the Zariski-open subspace of \emph{regular} characters, that is those characters $\delta$ such that $\delta\neq x^{-i}$ and $\delta\neq \chi x^i$ for $i\geq 0$.
Here $x\in\Tcal(\Q_p)$ is the tautological character and $\chi\in\Tcal(\Q_p)$ is the cyclotomic character. 

Fix an integer $d>0$. We write $\Tcal_d^{\rm reg}\subset \Tcal^d$ for the Zariski-open subspace of all $(\delta_1,\dots,\delta_d)\in\Tcal^d$ such that $\delta_i/\delta_j\in\Tcal^{\rm reg}$ for $i\neq j$.

We denote by $\Scal_d^\square$ the space of \emph{trianguline regular rigidified} $(\phi,\Gamma)$-modules, i.e 
\[\Scal_d^\square(X)=\{(D,\Fil_\bullet (D),\delta,\nu=(\nu_1,\dots,\nu_d))\},\]
where $D$ is a $(\phi,\Gamma)$-module over $\Rcal_X$ and $\Fil_\bullet(D)$ is a filtration of $D$ by $\Rcal_X$-subspaces stable under $\Phi$ and $\Gamma$ and which are locally on $X$ direct summands of $D$. Further $\delta=(\delta_1,\dots,\delta_d)$ is an $X$-valued point of $\Tcal_d^{\rm reg}$ and $\nu$ is a family of isomorphisms of $(\phi,\Gamma)$-modules
\[\nu_i:\Fil_{i+1}(D)/\Fil_i(D)\longrightarrow \Rcal_X(\delta_i).\]
This space comes along with a canonical morphism
\[\Scal_d^\square\longrightarrow \Tcal_d^{\rm reg}\]which is smooth of relative dimension $d(d-1)/2$, see \cite[Theorem 3.3]{Chenevier}.

By the construction in \cite{Chenevier} one obtains $\Scal_d^\square$ inductively as follows: We have $\Scal_1^\square=\Tcal$. For $d>1$ consider the open subset  $U_d\subset \Tcal\times \Scal_{d-1}^\square$ mapping to $\Tcal_d^{\rm reg}\subset \Tcal\times\Tcal_{d-1}^{\rm reg}$ under the canonical projection. Then $\Scal_{d}^\square$ is the space attached to a vector bundle $\Mcal_d$ of rank $d-1$ on $U_d$. 

If we inductively apply the same construction with the projective bundle $\mathbb{P}(\Mcal_d)$ attached to $\Mcal_d$, then one ends up with a space $\Scal_d^{\rm ns}$ whose $X$-valued points are given as follows:
\[\Scal_d^{\rm ns} (X)=\{(D,{\rm Fil}_\bullet,\delta)\},\]
where $D$, ${\rm Fil}_\bullet$ and $\delta$ are as above, the subquotient $\Fil_{i+1}(D)/\Fil_i(D)$ is isomorphic to $\Rcal_X(\delta_i)$ (but this time we do not fix the isomorphism) and for all (geometric) points $x\rightarrow X$ the sequences 
\[0\rightarrow \Fil_i(D)\otimes k(x)\rightarrow \Fil_{i+1}(D)\otimes k(x)\rightarrow \big(\Fil_{i+1}(D)/\Fil_i(D)\big)\otimes k(x)\rightarrow 0\]  
are non split for $i=0,\dots,d-1 $. 
By construction, the morphism $\Scal_d^{\rm ns}\rightarrow \Tcal_d^{\rm reg}$ is proper and smooth of relative dimension $(d-1)(d-2)/2$. We will write $\Dcal_d^{\rm ns}$ for the universal family of non-split trianguline $(\phi,\Gamma)$-modules on $\Scal_d^{\rm ns}$

\section{Construction of the local component}
We continue to use the notations of the preceding section. 

By Theorem $\ref{maintheofamilies}$ there is an open subspace $\Scal_d^{\rm ns, adm}\subset \Scal_d^{\rm ns}$ over which the universal trianguline family $\Dcal_d^{\rm ns}$ is induced by a family of $\mathcal{G}_p$-representations.
Let us write $T_\Scal$ for the associated pseudo-character, i.e. the trace of the family of $\mathcal{G}_p$-representations. For an open affinoid $\Spa(A,A^+)\subset \Scal_d^{\rm ns, adm}$ this is a map
 \[T_A:\mathcal{G}_p\longrightarrow A.\] 
Let $\bar\tau:\mathcal{G}_p\rightarrow \Fbb$ be a pseudo-character with values in a finite field $\Fbb$, write $R_{\bar\tau}$ for the universal deformation ring of $\bar\tau$ and $\Xfrak_{\bar\tau}$ for the generic fiber of $\Spf R_{\bar\tau}$ with universal pseudo-character $\tau$. If we write $\Scal^{\rm ns}(\bar\tau)$ for the open and closed subset of $\Scal_d^{\rm ns, adm}$, where $T_\Scal$ reduces to $\bar\tau$ in the special fiber, then there is a morphism
\[f_{\bar\tau}:\Scal^{\rm ns}(\bar\tau)\longrightarrow \Xfrak_{\bar\tau} \]
such that $T_\Scal|_{\Scal^{\rm ns}(\bar\tau)}=f_{\bar\tau}^\ast \tau$. 

We now obtain a map
\begin{equation}\label{defofpitau}
\pi_{\bar\tau}:\Scal^{\rm ns}(\bar\tau)\longrightarrow \Xfrak_{\bar\tau}\times \Tcal_d^{\rm reg},\end{equation}
where the composition of $\pi_{\bar\tau}$ with the projection to $\Tcal_d^{\rm reg}$ is given by restriction of the canonical map $\Scal_d^{\rm ns}\rightarrow \Tcal_d^{\rm reg}$. The following theorem is the main result of this section. 
\begin{theo}\label{properness}
The map
\[\pi_{\bar\tau}:\Scal^{\rm ns}(\bar\tau)\longrightarrow \Xfrak_{\bar\tau}\times \Tcal_d^{\rm reg}\]
is finite and injective.
\end{theo}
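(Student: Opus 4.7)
The plan is to split the proof into set-theoretic injectivity (whose key input is Chenevier's uniqueness of triangulations with regular parameters) and properness (from which finiteness follows by combining with injectivity). Throughout, a $K$-valued point $x\in\Scal^{\rm ns}(\bar\tau)(K)$ with $K$ a finite extension of $\Q_p$ corresponds to a non-split trianguline $(\phi,\Gamma)$-module $(D,\Fil_\bullet,\delta)$ whose underlying \'etale family of Galois representations $V=V(D)$ satisfies $\operatorname{tr} V=f_{\bar\tau}^\ast\tau$.

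For injectivity, suppose $x_1,x_2\in\Scal^{\rm ns}(\bar\tau)(K)$ map to the same point $(t,\delta)\in(\Xfrak_{\bar\tau}\times\Tcal_d^{\rm reg})(K)$. Chenevier's uniqueness theorem guarantees that for a fixed $(\phi,\Gamma)$-module $D$ over $\Rcal_K$ the triangulation with a given ordered tuple of regular parameters $\delta$ is unique up to isomorphism; hence the problem reduces to producing an isomorphism $D_1\cong D_2$, equivalently $V_1\cong V_2$ via Berger--Colmez. Part (ii) of the proposition on pseudo-characters gives $V_1^{ss}\cong V_2^{ss}$, so both share the graded pieces $\Rcal_K(\delta_i)$. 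The regularity of $\delta$ makes the relevant $\operatorname{Ext}^1$-groups between the rank-one objects $\Rcal_K(\delta_i)$ one-dimensional, and the non-split condition together with the common pseudo-character $t$ should rigidify the iterated extension class and force $V_1\cong V_2$. This rigidity is the main obstacle: since two non-isomorphic iterated extensions with the same semisimplification have the same classical trace, the argument must exploit structure specific to the trianguline setting (the compatibility of $\phi$ and $\Gamma$ with the filtration, together with regularity) in order to recover the extension class from the pseudo-character.

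For finiteness, set $Z=\Scal_d^{\rm ns}\times_{\Tcal_d^{\rm reg}}(\Xfrak_{\bar\tau}\times\Tcal_d^{\rm reg})$; this is proper over $\Xfrak_{\bar\tau}\times\Tcal_d^{\rm reg}$ by base change from the properness of $\Scal_d^{\rm ns}\to\Tcal_d^{\rm reg}$. The family of Galois representations on $\Scal_d^{\rm ns, adm}$ provided by Theorem \ref{maintheofamilies} induces a natural map $\Scal^{\rm ns}(\bar\tau)\to Z$, factoring $\pi_{\bar\tau}$ through the proper projection $Z\to\Xfrak_{\bar\tau}\times\Tcal_d^{\rm reg}$. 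I would then argue that this map identifies $\Scal^{\rm ns}(\bar\tau)$ with a closed subspace of the admissible open part of $Z$: agreement of the trianguline family's pseudo-character with the pulled-back universal pseudo-character is a closed condition, and the mod-$p$ reduction condition of \eqref{reductionmodp} is open and closed. Combined with the properness of $Z\to\Xfrak_{\bar\tau}\times\Tcal_d^{\rm reg}$, we obtain properness of $\pi_{\bar\tau}$, and together with injectivity (which forces fibers to be discrete) this yields finiteness.
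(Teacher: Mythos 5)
Your split into injectivity and properness matches the architecture of the paper's argument, and for injectivity you correctly identify the key input: the uniqueness of a triangulation with a fixed tuple of regular parameters on a fixed $(\phi,\Gamma)$-module $D$, which the paper establishes by descending induction using that $D_1=\{x\in D : D/\Rcal_E x\ \text{torsion free},\ \Phi(x)=\delta_1(p)x,\ \gamma x=\delta_1(\gamma)x\}$ is one-dimensional thanks to regularity and non-splitness. The issue you flag --- that the pseudo-character $\tau$ only recovers the semisimplification, so one must still pin down the isomorphism class of the underlying $(\phi,\Gamma)$-module $D$ and not merely the triangulation for a given $D$ --- is real, and you are right that it is not addressed by the triangulation-uniqueness step alone; the paper's own argument is terse on exactly this point (``let $\rho$ be the associated representation''), so your honesty here is warranted rather than a defect peculiar to your approach.

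The genuine gap is in the finiteness argument. You set $Z=\Scal_d^{\rm ns}\times_{\Tcal_d^{\rm reg}}(\Xfrak_{\bar\tau}\times\Tcal_d^{\rm reg})$, note $Z\to\Xfrak_{\bar\tau}\times\Tcal_d^{\rm reg}$ is proper by base change, and then claim $\Scal^{\rm ns}(\bar\tau)$ is a \emph{closed} subspace of the \emph{admissible open} part $Z^{\rm adm}\subset Z$, hoping to inherit properness. This does not work: $Z^{\rm adm}$ is only open in $Z$ (the inclusion of the admissible locus is partially proper, not closed, as in Proposition $\ref{Xadmpartproper}$), so ``closed inside $Z^{\rm adm}$'' yields a locally closed subspace of $Z$ and hence at best a partially proper morphism to $\Xfrak_{\bar\tau}\times\Tcal_d^{\rm reg}$. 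That is exactly why the paper cannot stop there: after establishing separatedness and partial properness via the valuative criterion (using properness of $\Scal_d^{\rm ns}\to\Tcal_d^{\rm reg}$ and partial properness of the admissible inclusion --- morally what you are doing), it must \emph{additionally} prove quasi-compactness of $\pi_{\bar\tau}$. This is the content of Lemma $\ref{quasicompact}$: for two families of $(\phi,\Gamma)$-modules $\Ncal_1,\Ncal_2$ with $\Ncal_1$ \'etale, the locus $\{\Ncal_1\otimes k(x)\cong\Ncal_2\otimes k(x)\}$ is quasi-compact, proved by choosing lattices, bounding the degree of an isomorphism $p$-adically, and covering the relevant Zariski-closed commutant locus by the quasi-compact pieces $X_M$. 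Applied with $\Ncal_1={\bf D}_{\rm rig}^\dagger(f^*\Vcal_{\bar\tau}^\square)$ on a framed deformation space and $\Ncal_2=g^*\Dcal_d^{\rm ns}$, this yields quasi-compactness of (a base change of) $\pi_{\bar\tau}$, after which quasi-compact + partially proper gives proper, and proper + quasi-finite gives finite. Your proposal has no analogue of this bounding step, and without it the conclusion does not follow.
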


\subsection{Partially proper morphisms and the valuative criterion}
Before we continue, we need some preparation about proper morphisms. We collect some definitions and properties from \cite[1.3]{Huber}.
\begin{defn}
Let $f:X\rightarrow Y$ be a morphism of adic spaces.\\
\noindent (i) The morphism $f$ is called \emph{proper} if it is of $ ^+$-weakly finite type and universally closed.\\
\noindent (ii) The morhpism is called \emph{partially proper} if it is locally of $^+$-weakly finite type and universally specializing.
\end{defn}
\begin{rem}
We are not going to define the finiteness condition being locally of \emph{$^+$-weakly finite type} here. As we work in the category of adic spaces locally of fintie type over $\Q_p$, all morphisms are locally of $^+$-weakly finite type. Further all morphisms of finite type (that is in this case all quasi-compact morphisms) are of $^+$-weakly finite type. See \cite[1.2]{Huber} for the precise definitions. \end{rem}
Recall that a \emph{valuation ring} of an adic space $X$ is a tuple $(A,x)$, with $x\in X$ and $A\subset k(x)^+$ open and integrally closed. A \emph{center} of $(A,x)$ is a point $y\in X$ which is a specialization of $x$ such that $k(y)^+=i^{-1}(A)$ under the canonical map $i:k(y)\rightarrow k(x)$.
\begin{prop} Let $f:X\rightarrow Y$ be a morphism of adic spaces which is locally of $^+$-weakly finite type. \\
\noindent {\rm (i)} The morphism $f$ is proper if and only it is partially proper and quasi-compact.\\
\noindent {\rm (ii)} The morphism $f$ is separated if and only if it is quasi-separated and if for all valuation rings $(A,x)$ of $X$ and every center $y$ of $(A\cap k(f(x)),f(x))$ in $Y$ there is at most one center $z$ of $(A,x)$ mapping to $y$.\\
\noindent {\rm (iii)} The morphism $f$ is partially proper if and only if it is quasi-separated and if for all valuation rings $(A,x)$ of $X$ and every center $y$ of $(A\cap k(f(x)),f(x))$ in $Y$ there is a unique center $z$ of $(A,x)$ mapping to $y$.
\end{prop}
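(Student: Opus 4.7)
These are Huber's valuative criteria for morphisms of adic spaces; the plan is to deduce all three equivalences from two topological inputs: (a) a subset of an adic space is closed if and only if it is quasi-compact and stable under specialization, and (b) the specialization relation in an adic space is detected by centers of valuation rings $(A,x)$ in the sense recalled in the paragraph preceding the proposition. Both inputs are established in Huber's theory; once they are in hand, the three statements unwind formally.

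For part (i), the forward direction is immediate: a proper morphism is of $^+$-weakly finite type (hence locally so and quasi-compact) and universally closed (hence universally specializing, as every closed subset is specialization-stable). For the converse, partial properness and quasi-compactness are preserved by base change, so it suffices to check that the image $f(Z)$ of a closed $Z \subset X$ is closed in $Y$. By input (a) this reduces to showing that $f(Z)$ is quasi-compact, which follows from quasi-compactness of $f$ together with that of $Z$, and specialization-closed, which follows directly from universal specialization of $f$ applied to $Z$.

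For part (iii), the content is to translate universal specialization into a lifting property for centers of valuation rings. Given $(A,x)$ on $X$ and a center $y$ of the pushed-forward valuation $(A \cap k(f(x)), f(x))$ in $Y$, the point $y$ encodes a specialization of $f(x)$; universal specialization of $f$, tested along the inclusion of this valuation ring, forces this specialization to lift to a center $z$ of $(A,x)$ above $y$, yielding existence. Conversely, by input (b) the existence of such lifts for all valuation rings of $X$, applied after arbitrary base change, implies universal specialization. Uniqueness of $z$ combined with quasi-separatedness is then equivalent to separatedness of the diagonal, giving the full characterization.

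For part (ii), apply the analysis of (iii) to the diagonal $\Delta_f : X \to X \times_Y X$, which is automatically quasi-separated because its own diagonal is an isomorphism, and which is closed if and only if $f$ is separated. Under the valuative test for $\Delta_f$, one is not concerned with surjectivity of $\Delta_f$ on centers, so only the uniqueness half of the condition in (iii) survives, matching exactly the at-most-one-center clause. The main technical obstacle lies in input (a), the topological characterization of closed subsets of adic spaces by quasi-compactness plus specialization-stability, which is the heart of \cite[1.3]{Huber} and relies on the existence of sufficiently many valuation rings with prescribed centers.
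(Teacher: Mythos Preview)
The paper does not prove this proposition; it simply records a citation to \cite[Lemma 1.3.4, Proposition 1.3.7 and Corollary 1.3.9]{Huber}. Your sketch does recover the strategy behind Huber's arguments, but your ``input (a)'' is misstated and, as written, is false. For instance, in $\Spec\Z$ the complement of the generic point is quasi-compact and stable under specialization yet not closed; analogous examples exist in adic spaces. The correct statement is that in a (locally) spectral space a subset is closed if and only if it is \emph{pro-constructible} and stable under specialization. Quasi-compactness of $f$ enters not through (a) as you phrase it but through a Chevalley-type result: a quasi-compact morphism of locally spectral spaces sends pro-constructible sets to pro-constructible sets, so $f(Z)$ is pro-constructible whenever $Z$ is closed. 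Combined with stability under specialization (which is what ``universally specializing'' gives), this yields closedness of $f(Z)$.

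With this correction your outline for (i) matches Huber's argument. Your treatments of (ii) and (iii) are in the right spirit, though the precise translation between ``universally specializing'' and the center-lifting condition, as well as the passage to the diagonal for (ii), require more care than your sketch indicates; the details are exactly what is carried out in the cited sections of \cite{Huber}.
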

\begin{proof}
This is \cite[Lemma 1.3.4, Proposition 1.3.7 and Corollary 1.3.9]{Huber}.
\end{proof}
\begin{prop}
Let $f:X\rightarrow Y$ be a proper and quasi-finite morphism of adic spaces locally of finite type over $\Q_p$. Then $f$ is finite.
\end{prop}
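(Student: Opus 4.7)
The argument I would follow mirrors the classical scheme-theoretic proof of ``proper plus quasi-finite equals finite'', translated into Huber's adic framework via his analogue of Kiehl's proper mapping theorem. Since finiteness is local on the target, I may assume $Y=\Spa(B,B^+)$ is affinoid.

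The first step is to produce a global affinoid structure on $X$. Because $f$ is proper, Huber's direct image theorem for proper morphisms of adic spaces (the adic version of Kiehl's theorem, as in \cite{Huber}) applied to the coherent sheaf $\Ocal_X$ shows that $f_*\Ocal_X$ is a coherent $\Ocal_Y$-algebra. In particular, $A:=\Gamma(X,\Ocal_X)$ is a finite $B$-module. Let $A^+$ be the integral closure of the image of $B^+$ in $A$; then $X':=\Spa(A,A^+)$ is a finite adic space over $Y$, and the identity on global sections gives a canonical $Y$-morphism $h:X\to X'$.

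The second step is to show $h$ is proper and bijective on points. Properness follows from the cancellation property: $h$ factors the proper morphism $f$ through the separated morphism $X'\to Y$, and the valuative criterion recalled in the preceding proposition makes it immediate that any factorization of a proper morphism through a separated one is itself proper. Quasi-finiteness of $h$ is inherited from $f$. For each point $y\in Y$, the fiber $X_y\to X'_y$ is a proper quasi-finite morphism of $0$-dimensional adic spaces over $k(y)$, and since $X'_y$ is the spectrum of a finite $k(y)$-algebra with the same global sections as $X_y$, a fiber-wise Stein-factorization argument forces this map to be bijective; hence $h$ itself is a bijection.

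The final and most delicate step is to upgrade this bijection to an isomorphism of adic spaces. Here the input is that $h_*\Ocal_X$ is a coherent $\Ocal_{X'}$-algebra with $\Gamma(X',h_*\Ocal_X)=A=\Gamma(X',\Ocal_{X'})$, so the adjunction map $\Ocal_{X'}\to h_*\Ocal_X$ is an isomorphism. Combined with the bijection on points, this lets one check locally on $X'$ (which is affinoid, so one can reduce to rational subsets) that the map of structure presheaves, and of integral subrings $\Ocal_X^+$, is an isomorphism; the control over $\Ocal^+$ comes from the description of $A^+$ as the integral closure. This last point is where I expect the main technical obstacle: one must verify compatibility of the valuation-theoretic data of the pre-adic space $(A,A^+)$ with the pointwise valuations coming from $X$, which essentially amounts to the classical assertion that a proper quasi-finite bijection of locally finite type rigid/adic spaces is an isomorphism. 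Once this is done, $f=g\circ h$ with $g:X'\to Y$ finite and $h$ an isomorphism, so $f$ is finite.
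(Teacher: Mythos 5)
The paper does not prove this statement from scratch: it simply cites Huber's Proposition 1.5.5 (\emph{\'Etale Cohomology of rigid analytic varieties and adic spaces}), so there is no in-text argument to compare yours against word for word. Your blind reconstruction via Stein factorization is the standard route and is, as far as I can tell, essentially the argument Huber's proof rests on: reduce to affinoid $Y$, invoke the coherence of $f_*\Ocal_X$ (the adic Kiehl theorem), form $X'=\Spa(A,A^+)$ with $A=\Gamma(X,\Ocal_X)$, and show the canonical $h:X\to X'$ is an isomorphism. So where the paper buys brevity by outsourcing to Huber, you provide the skeleton of the underlying proof, which is more informative but also exposes more surface area to check.

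On the substance, the outline is correct but several steps are stated more confidently than they deserve. (1) Defining $A^+$ as the integral closure of the image of $B^+$ in $A$ does give an affinoid ring, but one should note that this uses that $B$ is topologically of finite type over $\Q_p$ (hence excellent), so the integral closure is finite over $B^+$ and therefore open and bounded; without this $X'=\Spa(A,A^+)$ need not be an adic space. (2) The claim that $X_y$ and $X'_y$ have ``the same global sections'' is a cohomology-and-base-change assertion; it is true here because the fibers are finite over $k(y)$ (proper $+$ quasi-finite over a field, plus vanishing of higher direct images for a morphism with finite fibers), but it is not automatic and should be flagged. (3) The final upgrade from a proper bijection with $\Ocal_{X'}\xrightarrow{\sim} h_*\Ocal_X$ on global sections to an isomorphism of adic spaces does work for $X'$ affinoid via Tate acyclicity (a map of coherent sheaves on an affinoid is an isomorphism iff it is on global sections), but you must also match the $\Ocal^+$ and the valuation data, which you correctly identify as the delicate point; spelling this out is exactly the content of Huber's finiteness criterion. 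None of these is a fatal gap, but each would need to be justified for the argument to be complete.
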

\begin{proof}
This is a special case of \cite[Proposition 1.5.5]{Huber}. 
\end{proof}
We end this subsection by recalling that the inclusion of the admissible locus in a family of $(\phi,\Gamma)$-modules is partially proper.
\begin{prop}\label{Xadmpartproper}
Let $X$ be a reduced adic space locally of finite type over $\Q_p$ and $\Ncal$ be a family of $(\phi,\Gamma)$-modules over $\Rcal_X$. Then the inclusion of the admissible locus 
$X^{\rm adm}\hookrightarrow X$ is partially proper.
\end{prop}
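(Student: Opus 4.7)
The inclusion $\iota:X^{\rm adm}\hookrightarrow X$ is an open immersion between adic spaces locally of finite type over $\Q_p$, hence is automatically locally of ${}^+$-weakly finite type and quasi-separated. By part (iii) of the preceding proposition (the valuative criterion), partial properness is equivalent to the following statement: for every valuation ring $(A,x)$ of $X^{\rm adm}$ and every center $y$ of $(A,x)$ in $X$, the point $y$ already lies in $X^{\rm adm}$. In other words, $X^{\rm adm}\subset X$ must be stable under specialization.

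My plan is to reduce this to a pointwise situation using Theorem~\ref{maintheofamilies}(ii). The compatibility of the formation $X\mapsto X^{\rm adm}$ with base change allows one to pull back $\Ncal$ along the map $\Spa(k(y),k(y)^+)\to X$, so it suffices to check that in the local situation $X=\Spa(k(y),k(y)^+)$ the specialization $y$ of the generic point (which is admissible by hypothesis) is also admissible. Since every specialization of the generic point of $\Spa(k(y),k(y)^+)$ has the same complete non-archimedean residue field $k(y)$ --- only the open integrally closed subring inside $k(y)$ varies --- the fiber of the pulled-back family, viewed as a $(\phi,\Gamma)$-module over $\Rcal_{k(y)}$, does not depend on the chosen specialization.

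The final ingredient is the fiberwise criterion for admissibility underlying the construction of \cite{families}: a point $z\in X$ lies in $X^{\rm adm}$ exactly when the fiber $\Ncal(z)$ is étale as a $(\phi,\Gamma)$-module over $\Rcal_{k(z)}$. Combined with the observation above, étaleness of the fiber at the generic point transfers tautologically to $y$, so $y\in X^{\rm adm}$, concluding the proof. The main technical obstacle is extracting the fiberwise characterization of $X^{\rm adm}$ in the form needed for arbitrary higher-rank analytic points; this is essentially built into the construction in \cite{families} via its compatibility with base change, but one has to unwind that construction to see that it applies to the specializations appearing in the valuative criterion rather than just to rank-one points.
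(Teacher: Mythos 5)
The paper's ``proof'' is a one-line citation: the partial properness of $X^{\rm adm}\hookrightarrow X$ is already written into the statement of Theorem~$\ref{maintheofamilies}$(i), which asserts that $X^{\rm adm}$ is a ``natural open \emph{and partially proper} subspace'' and is imported wholesale from \cite{families}. You overlooked that the theorem already contains exactly the assertion to be proved, and set out instead to give an independent argument via the valuative criterion.

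Your sketch is a plausible reconstruction of the argument that presumably underlies the cited result: reduce to stability of $X^{\rm adm}$ under the specializations appearing in the valuative criterion, pull back to $\Spa(k(y),k(y)^+)$ using base-change compatibility, note that every point of that space has completed residue field $k(y)$ so all the pulled-back fibers are literally the same $(\phi,\Gamma)$-module over $\Rcal_{k(y)}$, and conclude by a fiberwise characterization of admissibility. But the last step is a genuine gap, one you yourself flagged: the claim that $z\in X^{\rm adm}$ exactly when $\Ncal\otimes k(z)$ is \'etale over $\Rcal_{k(z)}$, for an arbitrary (in particular higher-rank) analytic point $z$, is neither stated in this paper nor an elementary consequence of anything recalled in it. Establishing that criterion is, in essence, the content of the partial properness statement in \cite{families}; invoking it here without proof makes the argument circular in spirit. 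In the context of this paper the proposition is meant to be read off from Theorem~$\ref{maintheofamilies}$, and your proof cannot be substituted without first supplying the missing fiberwise characterization.
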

\begin{proof}
This is contained in the statement of Theorem $\ref{maintheofamilies}$.
\end{proof}
\subsection{Properness of the trianguline family}
We are now going to proof Theorem $\ref{properness}$. By the above discussion we need to show that $\pi_{\bar\tau}$ is partially proper, quasi-compact and finite over rigid analytic points.

\begin{lem}\label{quasicompact}
Let $X$ be an adic space of finite type over $\Q_p$ and let $\Ncal_1,\,\Ncal_2$ be families of $(\phi,\Gamma)$-modules of rank $d$ over $\Rcal_X$ such that $\Ncal_1$ is \'etale. Then the subset 
\[Y=\{x\in X\mid \Ncal_1\otimes k(x)\cong \Ncal_2\otimes k(x)\ \text{as}\ (\phi,\Gamma)\text{-modules}\}\]
is quasi-compact.
\end{lem}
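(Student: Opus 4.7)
The plan is to show that $Y$ is Zariski closed in $X$. Since $X$ is of finite type over $\Q_p$, hence quasi-compact, any Zariski closed subspace is automatically quasi-compact, and the lemma will follow.

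First I would show $Y \subset X^{\rm adm}$, the admissible locus for $\Ncal_2$. At any $x \in Y$ the fiber $\Ncal_2(x) \cong \Ncal_1(x)$ is étale (since $\Ncal_1$ is globally étale), so by Theorem~\ref{maintheofamilies} one has $x \in X^{\rm adm}$; moreover, by Proposition~\ref{Xadmpartproper}, the inclusion $X^{\rm adm} \hookrightarrow X$ is partially proper. On $X^{\rm adm}$ both modules come from families of $\mathcal{G}_p$-representations $V_1, V_2$ with ${\bf D}_{\rm rig}^\dagger(V_i) = \Ncal_i|_{X^{\rm adm}}$, and by the full faithfulness of ${\bf D}_{\rm rig}^\dagger$ the locus $Y$ coincides with $\{x \in X^{\rm adm} : V_1(x) \cong V_2(x)\}$.

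Next I would use the pseudo-character. Let $T_i = {\rm tr}\,V_i \colon \mathcal{G}_p \to \Ocal_{X^{\rm adm}}^+$ be the continuous trace; the closed condition $T_1(g) = T_2(g)$ for all $g \in \mathcal{G}_p$ cuts out a Zariski closed subspace $Z \subset X^{\rm adm}$ containing $Y$. Thanks to Mazur's $p$-finiteness condition for $\mathcal{G}_p$, the space of continuous pseudo-characters of dimension $d$ is governed by finitely many equations, so $Z$ is cut out by a finite list of analytic conditions. Combining this with the partial properness of $X^{\rm adm} \hookrightarrow X$, these equations extend across the non-admissible boundary: any specialization of a point of $Y$ remains in $X^{\rm adm}$, and so $Z$ is Zariski closed in $X$ itself.

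To finish, I would argue that $Y$ is Zariski closed inside $Z$. On $Z$ the representations $V_1$ and $V_2$ share their semi-simplifications; at points where $V_i(x)$ is semisimple (which is a Zariski open dense subset) we have $Y = Z$. The possibly non-semisimple locus is handled via the coherent sheaf $\sheafHom_{\mathcal{G}_p}(V_1, V_2)$ on $X^{\rm adm}$ (coherence coming from finiteness of Galois/$(\phi,\Gamma)$-cohomology in families, e.g.\ Kedlaya--Pottharst--Xiao): the fibrewise condition "contains an invertible element" can be upgraded to a closed condition using the determinant pairing with the corresponding $\sheafHom_{\mathcal{G}_p}(V_2, V_1)$ into $\sheafEnd_{\mathcal{G}_p}(V_1)$.

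The main obstacle is precisely the last step: bridging "isomorphic representations" and "representations with matching pseudo-character", which agree generically but not at non-semisimple points. The essential inputs are the partial properness of Proposition~\ref{Xadmpartproper} (to keep $Y$ from escaping to the non-admissible boundary of $X$), coherence of Galois cohomology in families, and Mazur's $p$-finiteness condition for $\mathcal{G}_p$ (turning the infinite family of trace equalities into a finite one).
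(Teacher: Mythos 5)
Your approach diverges substantially from the paper's. The paper never claims that $Y$ is Zariski closed; it only proves quasi-compactness, by exhibiting $Y$ as the continuous image of a quasi-compact set. You instead aim for the much stronger statement that $Y$ is Zariski closed in $X$, and that is the wrong target: the lemma is stated as quasi-compactness rather than closedness precisely because closedness can fail.

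The gap occurs in your last step, passing from $Z$ (the locus where $T_1 = T_2$) to $Y$ (the locus where $V_1(x)\cong V_2(x)$). Equality of traces only forces isomorphism when the fibers are semisimple, and the semisimple locus is neither open nor closed in general. Your proposed repair via the coherent sheaf $\sheafHom_{\mathcal{G}_p}(V_1,V_2)$ and a "determinant pairing" does not give a closed condition: the fiberwise requirement that $\Hom_{\mathcal{G}_p}(V_1(x),V_2(x))$ contain an invertible element is the nonvanishing of a determinant, hence an open condition, not a closed one. To see the difficulty concretely, take $V_1$ a constant non-split self-extension of the trivial character (so $\Ncal_1$ is \'etale) and $V_2$ the family of self-extensions with class $t\cdot[c]$ over the disc with coordinate $t$. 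Then $T_1 = T_2$ identically, so $Z$ is everything, while $V_1(x)\cong V_2(x)$ exactly on $\{t(x)\neq 0\}$, which is open and not closed. So your argument cannot close the gap between $Z$ and $Y$ by coherence alone, and the whole strategy of reducing to Zariski-closedness does not work.

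The paper's argument is more elementary and never invokes pseudo-characters or the admissible locus. After restricting to a closed annulus $\boldB_{[r,s]}$ and choosing lattices $\Nfrak_1$ (\'etale) and $\Nfrak_2$, it considers the Zariski-closed subspace of $(\phi,\gamma)$-equivariant isomorphisms inside the analytification of $\Spec B\times\GL_d$, and exhausts that analytic space by the quasi-compact pieces $X_M$ of $p^{\pm M}$-bounded maps. Noetherianity of $B$ guarantees that a single $X_M$ already meets every irreducible component of the isomorphism scheme; fixing a point $x_0$ of the annulus with residue field not finite over $\Q_p$, the fiber $(Z^{\rm ad}\cap X_M)_{x_0}$ is quasi-compact and surjects onto $Y$, so $Y$ is quasi-compact. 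No closedness is asserted, and none is needed.
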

\begin{proof}
As $\Ncal_1$ is known to be \'etale, the restriction of $\Ncal_2$ to $Y$ has to be \'etale as well. Hence it suffices to consider $\Ncal_i|_{X\times\boldB_{[r,s]}}$ for some $r<s<1$, as $\Ncal_i$ may be reconstructed from this restriction, compare \cite[Proposition 6.5]{crysfamilies}. Let $\Nfrak_1\subset \Ncal_1|_{X\times\boldB_{[r,s]}}$ be an \'etale lattice and let $\Nfrak_2\subset \Ncal_2|_{X\times\boldB_{[r,s]}}$ be any lattice. Without loss of generality we may assume that $X=\Spa(A,A^+)$ is affinoid. If we set $B=A\langle p^{-s}T,p^rT^{-1}\rangle$, then $X\times\boldB_{[r,s]}=\Spa(B,B^+)$.
Let $\gamma\in\Gamma$ be a topological generator, then $\Phi$ and $\gamma$ act on $\Nfrak_1$ and $\Nfrak_2$ by $p$-isogenies, i.e. $\Phi$ and $\gamma$ induce morphisms 
\[\Nfrak_i\longrightarrow p^{-N}\Nfrak_i\]
for some $N\gg 0$ depending on $\Phi$ and $\gamma$. For $M>0$ we write
\begin{equation}\label{bigcupX_M}
X_M=\{f\in \underline{\rm Hom}(\Nfrak_1,\Nfrak_2)\mid p^M\Nfrak_2\subset f(\Nfrak_1)\subset p^{-M}\Nfrak_2\}.
\end{equation}
This space is of finite type over $X\times \boldB_{[r,s]}$ and in fact $X_M=\Spa(B_M,B_M^+)$ for the ring \[B_M=B\langle p^{M} u_1,\dots,p^Mu_{d^2}, p^{M}u_1^{-1},\dots, p^Mu_{d^2}^{-1}, p^M\det, p^M\det\nolimits^{-1}  \rangle.\]. Further we have
\[\bigcup\nolimits_{M>0}X_M=\underline{\rm Isom}(\Ncal_1|_{X\times\boldB_{[r,s]}},\Ncal_2|_{X\times\boldB_{[r,s]}})=:X_{\infty}\]
and $X_{\infty}$ is the analytification of the scheme $\Spec B\times \GL_d$.  Let $Z\subset \Spec B\times \GL_d$ denote the Zariski-closed subspace defined by the morphisms which commute with $\Phi$ and $\gamma$. The analytification $Z^{\rm ad}$ of $Z$ is a Zariski closed subset of $X_{\infty}$ given by all isomorphisms commuting with $\Phi$ and $\gamma$. 

It follows from $(\ref{bigcupX_M})$ that the images of the morphisms
\[\Spec B_M\longrightarrow \Spec B\times \GL_d\]
cover $\Spec B\times\GL_d$ and hence there is some $M$ such that the image meets each irreducible component of $Z$ (note that $B$ and $B_M$ are noetherian). 

If now $x_0\in\boldB_{[r,s]}$ is any points with residue field not finite over $\Q_p$ (i.e. if  a function $f$ vanishes at $x_0$, then it is identically $0$), then it follows that \[(Z^{\rm ad}\cap X_M)_{x_0}=(Z^{\rm ad}\cap X_M)\times_{\boldB_{[r,s]}}\{x_0\}\] maps onto $Y$ under the canonical projection to $X$. The claim follows, as $Z^{\rm ad}\cap X_M$, and hence $(Z^{\rm ad}\cap X_M)_{x_0}$, are quasi-compact.
\end{proof}
\begin{cor}
The map $\pi_{\bar\tau}$ defined in $(\ref{defofpitau})$ is quasi-compact. 
\end{cor}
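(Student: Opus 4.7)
The plan is to combine properness of Chenevier's morphism $\Scal_d^{\rm ns}\to\Tcal_d^{\rm reg}$ with Lemma~\ref{quasicompact}, after passing to the framed deformation space so as to work with an honest Galois representation rather than only a pseudo-character. Fix a quasi-compact open $U\subset\Xfrak_{\bar\tau}\times\Tcal_d^{\rm reg}$; after refining I may assume $U=U_1\times U_2$ with both $U_i$ quasi-compact. Properness of $\Scal_d^{\rm ns}\to\Tcal_d^{\rm reg}$ makes the preimage $\Scal_d^{\rm ns}(U_2)$ quasi-compact. Since $\Scal^{\rm ns}(\bar\tau)\hookrightarrow\Scal_d^{\rm ns,adm}\hookrightarrow\Scal_d^{\rm ns}$ is clopen followed by the partially proper inclusion of the admissible locus (Proposition~\ref{Xadmpartproper}), and any adic space of finite type over $\Q_p$ has only finitely many irreducible components, the intersection $\Scal^{\rm ns}(\bar\tau)\cap\Scal_d^{\rm ns}(U_2)$ is itself quasi-compact (a union of irreducible components of $\Scal_d^{\rm ns}(U_2)$ closed under specialisation). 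It then remains to cut further by the condition $f_{\bar\tau}\in U_1$.

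To bring Lemma~\ref{quasicompact} to bear on the $U_1$-direction, I move to the framed side. The morphism $\tilde\Xfrak_{\bar\tau}^\square\to\Xfrak_{\bar\tau}$ is surjective and of finite type on generic fibres, so the preimage $\tilde U_1\subset\tilde\Xfrak_{\bar\tau}^\square$ of $U_1$ is quasi-compact, and on $\tilde\Xfrak_{\bar\tau}^\square$ there lives a universal continuous representation $\rho:\mathcal{G}_p\to\GL_d(\Ocal)$ with ${\rm tr}\,\rho$ equal to the universal pseudo-character; via Berger--Colmez this gives an \'etale family of $(\phi,\Gamma)$-modules $\Ncal_1:={\bf D}_{\rig}^\dagger(\rho)$ on $\tilde U_1$. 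On the quasi-compact adic space $Y:=\Scal_d^{\rm ns}(U_2)\times\tilde U_1$ of finite type over $\Q_p$, Lemma~\ref{quasicompact} applied to $\Ncal_1$ and $\Ncal_2:=\Dcal_d^{\rm ns}$ (pulled back from $\Scal_d^{\rm ns}(U_2)$, trianguline) shows that the locus $W\subset Y$ of fibrewise isomorphism $\Ncal_1|_x\cong\Ncal_2|_x$ is quasi-compact.

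The last step is descent. The fibre product $\tilde\Scal^{\rm ns}(\bar\tau):=\Scal^{\rm ns}(\bar\tau)\times_{\Xfrak_{\bar\tau}}\tilde\Xfrak_{\bar\tau}^\square$ maps naturally to $Y$, and after further framing the Galois representation underlying the trianguline family (an auxiliary $\GL_d$-torsor) one obtains a lift of $\pi_{\bar\tau}^{-1}(U)$ whose image in $Y$ lies inside $W$---at each such framed point $\rho$ coincides with the representation underlying $\Dcal_d^{\rm ns}$, so their associated $(\phi,\Gamma)$-modules are isomorphic. Combining quasi-compactness of $W$ with that of the framed lift, and pushing forward through the quasi-compact surjection $\tilde\Xfrak_{\bar\tau}^\square\to\Xfrak_{\bar\tau}$, one concludes that $\pi_{\bar\tau}^{-1}(U)$ is quasi-compact. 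The main obstacle is precisely this identification between framed and unframed data: the universal $\rho$ and the representation underlying the trianguline module share a pseudo-character but are not canonically isomorphic, so to match them one must work on a further $\GL_d$-torsor of framings and then verify that the induced map into $W$ is quasi-finite; this is where full faithfulness of ${\bf D}_{\rig}^\dagger$ together with regularity of the parameters $\delta_i$ (which forces the triangulation to be uniquely determined by the underlying $(\phi,\Gamma)$-module) must be invoked, so that quasi-compactness of $W$ actually descends to $\pi_{\bar\tau}^{-1}(U)$.
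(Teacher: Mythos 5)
Your overall strategy matches the paper's: pass to the framed space $\tilde\Xfrak_{\bar\tau}^\square$ where one has an actual Galois representation $\Vcal_{\bar\tau}^\square$ rather than a pseudo-character, use properness of $\Scal_d^{\rm ns}\to\Tcal_d^{\rm reg}$ to handle the $\Tcal$-direction, and then invoke Lemma~\ref{quasicompact} on the pair $\Ncal_1={\bf D}_{\rig}^\dagger(\Vcal_{\bar\tau}^\square)$, $\Ncal_2=\Dcal_d^{\rm ns}$ over a product-type quasi-compact base. The paper does exactly this, forming $U'=U\times_{\Tcal_d^{\rm reg}}\Scal_d^{\rm ns}$ for an affinoid $U\subset\tilde\Xfrak_{\bar\tau}^\square\times\Tcal_d^{\rm reg}$ and observing that the isomorphism locus $Y\subset U'$ from the lemma is quasi-compact and maps \emph{onto} $(\tilde\pi_{\bar\tau})^{-1}(U)$; quasi-compactness then descends through the quasi-compact surjection $\tilde\Xfrak_{\bar\tau}^\square\to\Xfrak_{\bar\tau}$.

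Two points of concern. First, the parenthetical claim in your opening paragraph --- that $\Scal^{\rm ns}(\bar\tau)\cap\Scal_d^{\rm ns}(U_2)$ is quasi-compact because it is clopen composed with a partially proper open immersion, hence ``a union of irreducible components'' --- is false. Being open and closed under specialization does not make a subset a union of irreducible components (think of the open unit disc inside the closed unit disc); if this implication held, the admissible locus would be automatically quasi-compact and neither Lemma~\ref{quasicompact} nor this corollary would be needed. Fortunately this claim is not load-bearing, since you go on to apply the lemma anyway, but it should be removed.

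Second, the final paragraph introduces an extra $\GL_d$-torsor of framings of the Galois representation underlying $\Dcal_d^{\rm ns}$ together with a quasi-finiteness verification. This is unnecessary for the conclusion: one does not need a morphism $\tilde\Scal^{\rm ns}(\bar\tau)\to W$ with controlled fibres, only a \emph{surjection} from a quasi-compact space. It suffices to check that the natural projection $Y\to\tilde\Xfrak_{\bar\tau}^\square\times_{\Xfrak_{\bar\tau}}\Scal^{\rm ns}(\bar\tau)$ hits every point of $(\tilde\pi_{\bar\tau})^{-1}(U)$, since the continuous image of a quasi-compact space which happens to be an open subspace is quasi-compact. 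You have correctly identified that the subtle step is the pointwise identification of $\rho_{\tilde x}$ with the Galois representation underlying $\Dcal_d^{\rm ns}\otimes k(s)$ given only an equality of traces; but invoking full faithfulness of ${\bf D}_{\rig}^\dagger$ and regularity of the $\delta_i$ does not resolve that --- those facts tell you the triangulation is determined by the $(\phi,\Gamma)$-module, not that two representations with the same trace are isomorphic. The paper glosses this as ``follows from the construction''; if you want to justify it carefully you should argue directly about which points of $\tilde\Xfrak_{\bar\tau}^\square$ actually arise, rather than stacking further torsors on top.
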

\begin{proof}
Recall the definition of the space $\tilde\Xfrak_{\bar\tau}^\square$ from the previous section. The universal family of $\mathcal{G}_p$-representations on $\tilde\Xfrak_{\bar\tau}^\square$ is denoted by $\Vcal_{\bar\tau}^\square$. We consider the fiber product
\[\begin{xy}
\xymatrix{
\tilde\Scal^{\rm ns}(\bar\tau)\ar[r]\ar[d]_{\tilde\pi_{\bar\tau}} & \Scal^{\rm ns}(\bar\tau)\ar[d]^{\pi_{\bar\tau}}\\
\tilde\Xfrak_{\bar\tau}^\square\times\Tcal_d^{\rm reg}\ar[r] &\Xfrak_{\bar\tau}\times\Tcal_d^{\rm reg}.
}
\end{xy}\]
As $\tilde\Xfrak_{\bar\tau}^\square\rightarrow \Xfrak_{\bar\tau}$ is quasi-compact and surjective, it is enough to show that the base change $\tilde\pi_{\bar\tau}$ is quasi-compact. 

Let $U=\Spa(A,A^+)\subset \tilde\Xfrak_{\bar\tau}^{\square}\times\Tcal_d^{\rm reg}$ be affinoid open. We need to show that \[V=(\tilde\pi_{\bar\tau})^{-1}(U)\subset \tilde\Scal^{\rm ns}(\bar\tau) \] is quasi-compact. 

By construction, the morphism $\Scal_d^{\rm ns}\rightarrow \Tcal_d^{\rm reg}$ is proper and hence quasi-compact. It follows that 
\[U'=U\times_{\Tcal_d^{\rm reg}}\Scal_d^{\rm ns}\]
is quasi-compact. Now there are two families $\Ncal_1$ and $\Ncal_2$ of $(\phi,\Gamma)$-modules on $U'$: 
Write $f$ for the composition
\[f:U'\longrightarrow U\longrightarrow \Xfrak_{\bar\tau}^\square\]
an let $g$ denote the projection $g:U'\rightarrow \Scal_d^{\rm ns}$. We set
\[\Ncal_1={\bf D}_{\rm rig}^\dagger(f^\ast \Vcal_{\bar\tau}^\square)\ \text{and}\ \Ncal_2=g^\ast \Dcal_d^{\rm ns}.\]
It follows from Lemma $\ref{quasicompact}$ that 
\[Y=\{y\in U'\mid \Ncal_1\otimes k(y)\cong \Ncal_2\otimes k(y)\ \text{as}\ (\phi,\Gamma)\text{-modules}\}\]
is quasi-compact. On the other hand \[Y\subset U\times_{\Tcal_d^{\rm reg}}\Scal_d^{\rm ns, adm}\] and it follows from the construction that $Y$ maps onto 
\[(\tilde\pi_{\bar\tau})^{-1}(U)\subset \tilde\Scal^{\rm ns}(\bar\tau)=\tilde\Xfrak_{\bar\tau}^\square\times_{\Xfrak_{\bar\tau}}\Scal^{\rm ns}(\bar\tau).\]
\end{proof}
\begin{prop}
The morphism $\pi_{\bar\tau}:\Scal^{\rm ns}(\bar\tau)\rightarrow \Xfrak_{\bar\tau}\times \Tcal_d^{\rm reg}$ is injective on the level of rigid analytic points.
\end{prop}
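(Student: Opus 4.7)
Let $x_1, x_2 \in \Scal^{\rm ns}(\bar\tau)$ be two rigid analytic points with $\pi_{\bar\tau}(x_1)=\pi_{\bar\tau}(x_2)$. After base change to a finite extension $L/\Q_p$ containing both residue fields, they correspond to triples $(D_i, \Fil^i_\bullet, \delta)$, $i=1,2$, with common parameter $\delta\in\Tcal_d^{\rm reg}(L)$, each $D_i={\bf D}_{\rm rig}^\dagger(V_i)$ \'etale over $\Rcal_L$, and such that the Galois representations $V_1,V_2$ have the same pseudo-character $T:\mathcal{G}_p\to L$. The goal is to produce an isomorphism of trianguline $(\phi,\Gamma)$-modules $(D_1,\Fil^1_\bullet)\xrightarrow{\sim}(D_2,\Fil^2_\bullet)$, whence $x_1=x_2$.

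The plan splits into two parts. The first is a rigidity property of regular triangulations: for any $(\phi,\Gamma)$-module $D$ over $\Rcal_L$ admitting a triangulation with regular parameter $\delta$, that triangulation is uniquely determined by $D$. Indeed, regularity ($\delta_1\ne\delta_j$ for $j>1$) forces ${\rm Hom}_{\Rcal_L,\phi,\Gamma}(\Rcal_L(\delta_1),\Rcal_L(\delta_j))=0$ for $j>1$, and a d\'evissage against any triangulation of $D$ gives $\dim_L{\rm Hom}_{\Rcal_L,\phi,\Gamma}(\Rcal_L(\delta_1),D)=1$; hence $\Fil_1\subset D$ is intrinsic, being the saturation of the image of any nonzero such morphism. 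Induction on $D/\Fil_1$ (whose triangulation parameter $(\delta_2,\dots,\delta_d)$ is again regular) extends the uniqueness to each subsequent filtration step. As a consequence, any $(\phi,\Gamma)$-module isomorphism $D_1\cong D_2$ automatically carries $\Fil^1_\bullet$ onto $\Fil^2_\bullet$, so it suffices to construct such an isomorphism.

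The second part is to produce $D_1\cong D_2$ from the equality of pseudo-characters and parameters; equivalently, by the full faithfulness of ${\bf D}_{\rm rig}^\dagger$, one needs $V_1\cong V_2$ as $\mathcal{G}_p$-representations. The pseudo-character equality yields only $V_1^{\rm ss}\cong V_2^{\rm ss}$ (in characteristic zero and with $d!\in L^\times$), and bridging this to a full isomorphism requires the extra rigidity of the common non-split trianguline refinement. One argues inductively in $d$ using the rank-one sub $\Fil_1^i\cong\Rcal_L(\delta_1)$ and the rank-$(d-1)$ quotient $D_i/\Fil_1^i$, each of which carries a canonical trianguline structure with still-regular parameter: the regularity of $\delta_i/\delta_j$ keeps the relevant ${\rm Ext}^1$-spaces between graded pieces appropriately small, while the non-split condition at each step of the filtration pins down a single line in the projective space of extension classes. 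Combining the resulting uniqueness of the non-split trianguline lift with the isomorphism of semisimplifications forces $V_1\cong V_2$. The main obstacle is precisely this second step, because pseudo-characters do not detect extension data in reducible situations; overcoming it depends in an essential way on the interaction between the non-split condition and the regularity of the parameter $\delta$.
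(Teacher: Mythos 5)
Your Part 1 (uniqueness of the regular triangulation of a fixed $(\phi,\Gamma)$-module $D$) is the same d\'evissage argument the paper gives: regularity kills $\Hom(\Rcal_L(\delta_1),\Rcal_L(\delta_j))$ for $j\neq 1$, so $\Hom(\Rcal_L(\delta_1),D)$ is one-dimensional, and descending induction on the quotient finishes. That part is correct and matches the paper.

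Your Part 2, however, is where you diverge, and it is also where your argument is not actually closed. You are right to observe that the paper's phrase ``the associated representation'' silently assumes that the pseudo-character $\tau$ (together with $\delta$) determines the representation $V$, which is not automatic when $\tau$ is reducible; the paper's written proof does not address this, so you have identified a real elision. But your proposed resolution does not go through as stated. You claim that ``the non-split condition at each step of the filtration pins down a single line in the projective space of extension classes,'' and that combining this with $V_1^{\rm ss}\cong V_2^{\rm ss}$ forces $V_1\cong V_2$. This is false for $d\geq 3$: by construction $\Scal_d^{\rm ns}\rightarrow\Tcal_d^{\rm reg}$ is smooth of relative dimension $(d-1)(d-2)/2>0$, so there is a positive-dimensional moduli of nowhere-split trianguline $(\phi,\Gamma)$-modules with the same regular parameter $\delta$. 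The non-split condition therefore does not pin down a unique extension class at each stage; what distinguishes the fiber of $\pi_{\bar\tau}$ is the additional constraint imposed by the full pseudo-character $\tau$, and you give no mechanism by which the equality $\operatorname{tr}V_1=\operatorname{tr}V_2$ (which a priori only controls semisimplifications) propagates through the $(\phi,\Gamma)$-module extension data step by step. Your closing sentence acknowledges this is ``the main obstacle,'' but an acknowledged obstacle is not a proof. In short: Part 1 is a faithful reproduction of the paper's argument, Part 2 flags a genuine subtlety but the inductive claim you make to dispatch it is incorrect as stated, so the overall argument has a gap at precisely the point you single out as the crux.
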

\begin{proof}
Let $x=(\tau,\delta_1,\dots, \delta_d)\in {\rm im}\, \pi_{\bar\tau}$ be a rigid analytic point and let $\rho:\mathcal{G}_p\rightarrow \GL(V)$ be the associated representation on a $d$-dimensional vector space $V$ over a finite extension $E$ of $\Q_p$. Write $D={\bf D}_{\rig}^\dagger(V)$ for the $(\phi,\Gamma)$-module over $\Rcal_E=\Rcal_{\Spa(E,\Ocal_E)}$ associated with $V$.  We need to show that $D$ has a unique triangulation with weights $(\delta_1,\dots,\delta_d)$. The existence of such a triangulation is obvious. Proceeding by descending induction it is enough to show that 
\[D_1=\left \{x\in X\left |
\begin{array}{*{20}c}
 D/\Rcal_E x\ \text{is torsion free over}\ \Rcal_E,\\ 
 \Phi(x)=\delta_1(p),\ \gamma.x=\delta_1(\gamma)x
\end{array}
\right.\right\}\]
has $E$-dimension $1$. However, this is an obvious consequence of the fact that the filtration has to be nowhere split and $(\delta_1,\dots,\delta_d)\in \Tcal_d^{\rm reg}$.
\end{proof}

\begin{proof}[Proof of Theorem $\ref{properness}$]
We need to show that $\pi_{\bar\tau}$ is proper and injective at rigid analytic points. As we have already shown that $\pi_{\bar\tau}$ is quasi-compact and injective, it remains to show that it is partially proper.
To check that $\pi_{\bar\tau}$ is partially proper, we use the valuative criterion. It is clear that $\pi_{\bar\tau}$ is separated (and especially quasi-separated), as the morphisms $\Scal_d^{\rm ns}\rightarrow \Tcal_d^{\rm reg}$ and $\Xfrak_{\bar\tau}\times\Tcal_d^{\rm reg}\rightarrow \Tcal_d^{\rm reg}$ are separated.

Let $(A,x)$ be a valuation ring of $\Scal^{\rm ns}(\bar\tau)$ and let $y$ be a center of its image in $\Xfrak_{\bar\tau}\times\Tcal_d^{\rm reg}$. We need to show that there is a unique center $z$ of $(x,A)$ mapping to $y$.
Let $y_0\in \Tcal_d^{\rm reg}$ denote the image of $y$. As $\Scal_d^{\rm ns}\rightarrow \Tcal_d^{\rm reg}$ is proper there is a unique center $z\in \Scal^{\rm ns}_d$ of $(x,A)$ mapping to $z$. As the inclusion of the admissible locus is partially proper by Proposition $\ref{Xadmpartproper}$, it follows that $z\in\Scal_d^{\rm ns, adm}$. It is then obvious that $z\in\Scal^{\rm ns}(\bar\tau)\subset \Scal_d^{\rm ns,adm}$.\\
Finally $z$ has to map to $y$ as $\Xfrak_{\bar\tau}\times \Tcal_d^{\rm reg}\rightarrow \Tcal_d^{\rm reg}$ is separated.
\end{proof}
\begin{prop}
The image of a proper map $f: X\rightarrow Y$ of adic spaces locally of finite type over $\Q_p$ is Zariski closed.
\end{prop}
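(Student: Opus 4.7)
The plan is to deduce this from the coherence of the direct image sheaf under a proper morphism, in the form available for adic spaces locally of finite type over $\Q_p$ from Huber's generalization of Kiehl's proper mapping theorem. Since Zariski-closedness of $f(X)$ is a local question on $Y$, I would first reduce to the case where $Y=\Spa(A,A^+)$ is affinoid. Under this reduction, properness of $f$ forces $X$ to be quasi-compact, so Kiehl/Huber yields that $f_\ast\Ocal_X$ is a coherent $\Ocal_Y$-module. Set
\[
\mathcal{I}=\ker\bigl(\Ocal_Y\longrightarrow f_\ast\Ocal_X\bigr),
\]
which is then a coherent ideal sheaf cutting out a Zariski-closed subspace $V(\mathcal{I})\subseteq Y$.

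The claim is that $f(X)=V(\mathcal{I})$ as subsets of $Y$. Because the $\Ocal_Y$-action on the $\Ocal_Y$-algebra $f_\ast\Ocal_X$ factors through $\Ocal_Y/\mathcal{I}$, one has $\mathrm{Ann}(f_\ast\Ocal_X)=\mathcal{I}$; consequently $V(\mathcal{I})=\mathrm{supp}(f_\ast\Ocal_X)$. It therefore suffices to identify $f(X)$ with this support. If $y=f(x)$, the composition $\Ocal_{Y,y}\to (f_\ast\Ocal_X)_y\to \Ocal_{X,x}$ is the usual (local) homomorphism, so the middle term is nonzero and $y\in\mathrm{supp}(f_\ast\Ocal_X)$. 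This gives the inclusion $f(X)\subseteq\mathrm{supp}(f_\ast\Ocal_X)$.

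For the reverse inclusion I would exploit that, by the preceding propositions, $f$ being proper is universally closed; in particular $f(X)$ is closed in the analytic topology on $Y$. If $y\notin f(X)$, choose an open neighborhood $U\ni y$ with $U\cap f(X)=\emptyset$. Then $f^{-1}(U)=\emptyset$, so $(f_\ast\Ocal_X)(U)=\Ocal_X(f^{-1}(U))=0$, and hence $(f_\ast\Ocal_X)_y=0$. Thus $y\notin\mathrm{supp}(f_\ast\Ocal_X)$, and combining both inclusions yields $f(X)=V(\mathcal{I})$, which is Zariski-closed.

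The only serious ingredient is the coherence of $f_\ast\Ocal_X$ for proper $f$ in the adic-space setting; once that is invoked, the remaining manipulations with stalks and supports are formal. The reduction to affinoid base and the verification of Zariski-closedness via annihilators are routine, so I expect this coherence input (the adic-analytic Grauert/Kiehl theorem in Huber's framework) to be the essential black box.
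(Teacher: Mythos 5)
Your proof is correct and follows essentially the same route as the paper: invoke Kiehl/Huber coherence of $f_\ast\Ocal_X$, identify the Zariski-closed support of $f_\ast\Ocal_X$, and use that properness makes $f(X)$ closed to conclude $f(X)=\mathrm{supp}(f_\ast\Ocal_X)$. The only difference is that you spell out the identification $\mathrm{supp}(f_\ast\Ocal_X)=V(\mathcal{I})$ and the two inclusions via stalks, whereas the paper states these steps without elaboration.
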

\begin{proof}
It follows from \cite{Kiehl} that $f_{\ast}\Ocal_X$ is a coherent $\Ocal_Y$-module. Hence its support is Zariski-closed. On the other hand ${\rm supp}(f_{\ast}\Ocal_X)$ is the closure of $f(X)\subset Y$. The claim follows as a proper map is closed.
\end{proof}
\begin{cor}
The image of $\pi_{\bar\tau}$ is a Zariski-closed subspace \[X(\bar\tau)^{\rm  reg}\subset \Xfrak_{\bar \tau}\times \mathcal{T}^{\rm reg}.\]
\end{cor}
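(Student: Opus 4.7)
The proof is essentially a one-line combination of the two results that precede the corollary, so the plan reduces to assembling the ingredients correctly. Theorem~\ref{properness} establishes that $\pi_{\bar\tau}:\Scal^{\rm ns}(\bar\tau)\to\Xfrak_{\bar\tau}\times\Tcal_d^{\rm reg}$ is finite (and injective), while the immediately preceding proposition asserts that the image of a proper map of adic spaces locally of finite type over $\Q_p$ is Zariski-closed. Thus the plan is to deduce propernesss of $\pi_{\bar\tau}$ from its finiteness and then invoke the proposition.

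First I would note that a finite morphism of adic spaces is in particular proper: finiteness means (by definition in Huber's setup) that it is affinoid locally on the base given by a finite ring map, hence it is automatically of $^+$-weakly finite type, quasi-compact, and satisfies the strong valuative criterion of Proposition~3.1 above (the unique center exists because the corresponding statement holds for finite morphisms of affinoid algebras). Therefore Theorem~\ref{properness} implies that $\pi_{\bar\tau}$ is proper.

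Next I would apply the preceding proposition directly to $\pi_{\bar\tau}$: the image $\pi_{\bar\tau}(\Scal^{\rm ns}(\bar\tau))$ is the support of the coherent sheaf $(\pi_{\bar\tau})_\ast \Ocal_{\Scal^{\rm ns}(\bar\tau)}$, which is Zariski-closed in $\Xfrak_{\bar\tau}\times\Tcal_d^{\rm reg}$. We then define $X(\bar\tau)^{\rm reg}$ to be this closed subspace, endowed with (say) its reduced structure, which is precisely the image as a set.

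There is no substantial obstacle here; the only mild subtlety is making sure that in Huber's framework ``finite'' truly implies ``proper'', but this is standard and already implicit in the way Proposition~1.5.5 of \cite{Huber} is quoted just above (which goes in the harder direction: proper plus quasi-finite implies finite). Hence the corollary follows at once.
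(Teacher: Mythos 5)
Your proposal is correct and matches the paper's intent: the paper's own proof is literally ``This is an immediate consequence of the above,'' meaning exactly the chain you spell out — Theorem~\ref{properness} gives properness of $\pi_{\bar\tau}$ (indeed the proof of that theorem first establishes properness and then deduces finiteness, so one can invoke properness either directly or, as you do, via the standard fact that finite implies proper), and the immediately preceding proposition then gives that the image is Zariski-closed. No gap; your elaboration of why finite implies proper in Huber's framework is a helpful but inessential gloss.
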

\begin{proof}
This is an immediate consequence of the above. 
\end{proof}
\begin{defn}
The \emph{local finite slope space} $X(\bar\tau)$ is the (Zariski-) closure of \[X(\bar\tau)^{\rm reg}\subset \Xfrak_{\bar\tau}\times\Tcal^d.\] 
We further write $T_{\bar\tau}:\mathcal{G}_p\rightarrow\Ocal_{X(\bar\tau)}$ for the universal pseudo-character on $X(\bar\tau)$.  If $x\in X(\bar\tau)(\bar\Q_p)$, we write $\rho_x$ for the $\bar\Q_p$-valued $\mathcal{G}_p$-representation with trace 
\[T_{\bar\tau}\otimes k(x):\mathcal{G}_p\longrightarrow \Gamma(X(\bar\tau),\Ocal_{X(\bar\tau)})\longrightarrow k(x). \]
We also write $\pi_{\bar\tau}:\Scal^{\rm ns}(\bar\tau)\rightarrow X(\bar\tau)^{\rm reg}$ for the factorization of the morhphism defined in $(\ref{defofpitau})$. Further we write $\Scal(\bar\tau)$ for the normalization of $X(\bar\tau)$ in $\Scal^{\rm ns}(\bar\tau)$. This yields a space
\[\Scal(\bar\tau)\longrightarrow X(\bar\tau)\]
whose structure morhpism will again be denoted by $\pi_{\bar\tau}$. Finally let $\Vcal_{\bar\tau}^{\rm reg}$ denote the family of $\mathcal{G}_p$-representations on $\Scal^{\rm ns}(\bar\tau)$. The trace of this family is obviously given by $\pi_{\bar\tau}^\ast \big(T_{\bar\tau}|_{X(\bar\tau)^{\rm reg}}\big)$.
\end{defn}
\begin{prop}
The local finite slope space is equidimensional of dimenion 
\[\dim X(\bar\tau)= 1+\tfrac{d(d+1)}{2}=1+\dim B,\]
where $B\subset \GL_d$ is a Borel subgroup. The connected components of $X(\bar\tau)$ are irreducible.
\end{prop}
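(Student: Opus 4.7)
My plan is to reduce the proposition to the smoothness of $\Scal_d^{\rm ns}$ over $\Tcal_d^{\rm reg}$ combined with the finiteness and injectivity of $\pi_{\bar\tau}$ established in Theorem~$\ref{properness}$.

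By the inductive construction recalled in the preceding section, the morphism $\Scal_d^{\rm ns}\to \Tcal_d^{\rm reg}$ is proper and smooth of relative dimension $(d-1)(d-2)/2$. Since $\Tcal_d^{\rm reg}$ is open in $\Tcal^d=(\Wcal\times\Gbb_m)^d$ and hence smooth over $\Q_p$ of dimension $2d$, the space $\Scal_d^{\rm ns}$ is smooth over $\Q_p$ of dimension
\[
2d+\tfrac{(d-1)(d-2)}{2}=1+\tfrac{d(d+1)}{2}.
\]
The subspace $\Scal^{\rm ns}(\bar\tau)$, being open in $\Scal_d^{\rm ns,adm}\subset \Scal_d^{\rm ns}$, inherits smoothness and this dimension, so its connected components are irreducible.

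By Theorem~$\ref{properness}$ the map $\pi_{\bar\tau}\colon \Scal^{\rm ns}(\bar\tau)\to X(\bar\tau)^{\rm reg}$ is finite and bijective on points, hence a homeomorphism of underlying topological spaces (a continuous closed bijection with finite fibers). Therefore $X(\bar\tau)^{\rm reg}$ is equidimensional of dimension $1+d(d+1)/2$ with irreducible connected components. Since $X(\bar\tau)$ is by definition the Zariski closure of $X(\bar\tau)^{\rm reg}$ in $\Xfrak_{\bar\tau}\times\Tcal^d$, the irreducible components of $X(\bar\tau)$ are precisely the closures of those of $X(\bar\tau)^{\rm reg}$ and all have the same dimension; this gives equidimensionality.

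To prove that the connected components of $X(\bar\tau)$ themselves are irreducible -- equivalently, that the closures in $X(\bar\tau)$ of distinct irreducible components of $X(\bar\tau)^{\rm reg}$ remain disjoint -- I would pass to the normalization $\Scal(\bar\tau)\to X(\bar\tau)$. As $\Scal^{\rm ns}(\bar\tau)\hookrightarrow \Scal(\bar\tau)$ is a dense open immersion with smooth source, the connected components of $\Scal(\bar\tau)$ are irreducible and correspond bijectively to those of $\Scal^{\rm ns}(\bar\tau)$. A hypothetical intersection point $x$ of two distinct irreducible components of $X(\bar\tau)$ would yield, via the valuative criterion applied to the generic points of each component, lifts lying in two distinct connected components of $\Scal(\bar\tau)$; I would attempt to contradict this by combining the properness of $\Scal_d^{\rm ns}\to \Tcal_d^{\rm reg}$ with the partial properness of the admissible inclusion $\Scal_d^{\rm ns,adm}\hookrightarrow \Scal_d^{\rm ns}$ from Proposition~$\ref{Xadmpartproper}$ in order to transport the lifts back into $\Scal^{\rm ns}(\bar\tau)$, where the injectivity in Theorem~$\ref{properness}$ forces them to coincide. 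The case $x\in X(\bar\tau)^{\rm reg}$ is immediate from that injectivity; the hard case, and the main obstacle I foresee, is when $x$ projects to a non-regular weight tuple in $\Tcal^d\setminus\Tcal_d^{\rm reg}$, where one must argue more carefully using the local structure of the universal triangulation at such boundary points.
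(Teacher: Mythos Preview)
Your dimension computation is identical to the paper's: both compute $\dim\Scal_d^{\rm ns}=2d+\tfrac{(d-1)(d-2)}{2}=1+\tfrac{d(d+1)}{2}$ from smoothness over $\Tcal_d^{\rm reg}$, and both transfer this to $X(\bar\tau)$ via the finite injective map $\pi_{\bar\tau}$.

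For the irreducibility of connected components, the paper's argument is much more laconic than yours. It simply records that the connected components of $\Scal_d^{\rm ns,\mathrm{adm}}$ are irreducible (this follows from smoothness of $\Scal_d^{\rm ns}$, hence of its open subset) and then asserts without further comment that ``the same is true for the finite slope space.'' In particular, the paper does \emph{not} carry out the argument you outline with the normalization $\Scal(\bar\tau)$ and the valuative criterion, nor does it single out the boundary locus $\Tcal^d\setminus\Tcal_d^{\rm reg}$ as requiring special care.

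You have therefore correctly isolated a step that the paper treats as obvious: why can two distinct irreducible components of $X(\bar\tau)^{\rm reg}$ not acquire a common point after taking Zariski closure in $\Xfrak_{\bar\tau}\times\Tcal^d$? Your strategy of lifting a hypothetical intersection point along $\Scal(\bar\tau)\to X(\bar\tau)$ and invoking properness plus partial properness to force a contradiction is reasonable, and your instinct that the non-regular boundary is where the difficulty lies is sound. But the paper offers no additional input here; it simply does not engage with this point. So your approach is not \emph{different} from the paper's so much as \emph{more scrupulous} about a passage the paper leaves implicit.
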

\begin{proof}
It is clear that $X(\bar\tau)$ has the same dimension as $\Scal_d^{\rm ns}$. This space is smooth of relative dimension $(d-1)(d-2)/2$ over $\Tcal_d^{\rm reg}$. It follows that 
\[\dim \Scal_d^{\rm ns}=\dim \Tcal_d^{\rm reg}+\tfrac{(d-1)(d-2)}{2}=2d+\tfrac{(d-1)(d-2)}{2}=1+\tfrac{d(d+1)}{2}.\]
Further the connected components of $\Scal_d^{\rm ns, adm}$ are irreducible and hence the same is true for the finite slope space. 
\end{proof}
It seems to be very hard to prove anything about the global geometry of $X(\bar\tau)^{\rm reg}$. It is a direct consequence of the construction that the connected components are irreducible. However nothing is implied on the number of connected components. 

\noindent {\bf Question:} Is it true that $X(\bar\tau)$ has finitely many connected components?

 \subsection{The weight map}
Recall that $\Tcal=\Wcal\times \Gbb_m$, where $\Wcal$ is the \emph{weight space}, i.e. the space of all continuous characters of $\Z_p^\times$. For an affinoid algebra $A$ we have 
\[\Wcal(\Spa(A,A^+))=\{\text{continuous homomorphisms}\ \Z_p^\times \longrightarrow A^\times\}.\]
This is the generic fiber of $\Spf( \Z_p[[\Z_p^\times]])$ and decomposes into a disjoint sum of $p-1$ copies of the open unit disc $\Ubb$ over $\Q_p$. Let $\gamma$ be a topological generator of $\Gamma=\Z_p^\times$. For $\delta\in \Wcal (A)$ we define its weight
\begin{equation}\label{weight}
\omega (\delta)=-\left(\frac{\partial\delta}{\partial\gamma}\right)_{\gamma=1}=-\frac{\log(\delta(1+p))}{\log(1+p)}\in A,
\end{equation}
see \cite[2.3.3]{BellaicheChenevier} for example. As the logarithm is given by the power-series 
\[T\longmapsto \sum\nolimits_{n\geq 1} (-1)^{n+1}\frac{T^n}{n}\in\Q_p[[T]]\]
and as this power series defines a homomorphism $\Q_p[T]\rightarrow \Gamma(\Ubb,\Ocal_\Ubb)$
we obtain a map $\omega:\Wcal\longrightarrow \Abb_{\Q_p}^1$ to which we refer as the weight map. 
\begin{lem}\label{Senpoly}
Let $x=(x_0,\delta_1,\dots,\delta_d)\in X(\bar\tau)\subset \Xfrak_{\bar\tau}\times \Tcal^d$, then the image $(\omega(\delta_1),\dots,\omega(\delta_d))\in k(x)^d$ of $x$ under the map
\[\underline{\omega}=(\omega_1,\dots.\omega_d):X(\bar\tau)\longrightarrow \Xfrak_{\bar\tau}\times \Wcal^d\times \Gbb_m^d\longrightarrow \Wcal^d\longrightarrow \Abb_{\Q_p}^d\]
is given by the generalized Hodge-Tate weights of $\rho_x$.
\end{lem}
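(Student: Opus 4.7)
The plan is a two-step argument: first establish the claim on the Zariski-dense open $X(\bar\tau)^{\rm reg}$ by lifting to the trianguline cover $\Scal^{\rm ns}(\bar\tau)$, and then extend to all of $X(\bar\tau)$ by density.

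For the regular locus, let $x \in X(\bar\tau)^{\rm reg}$ and choose a lift $\tilde x \in \Scal^{\rm ns}(\bar\tau)$, which exists by surjectivity of $\pi_{\bar\tau}$ onto its image. Since $\Scal^{\rm ns}(\bar\tau) \subset \Scal_d^{\rm ns, adm}$, the universal trianguline $(\phi,\Gamma)$-module $\Dcal_d^{\rm ns}$ restricts there to ${\bf D}_{\rm rig}^\dagger(\Vcal_{\bar\tau}^{\rm reg})$, and by construction it carries a complete filtration with rank-one graded pieces $\Rcal(\delta_i)$. A direct inspection of the definitions shows that the Sen operator on $\Rcal(\delta)$ is the scalar $\omega(\delta)$: indeed, $\Gamma$ acts by the character $\delta|_{\Z_p^\times}$, and the Sen operator is precisely its logarithmic derivative at the identity, which is the quantity defined in $(\ref{weight})$. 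Since the Sen functor is exact and the Sen polynomial is multiplicative in short exact sequences, the Sen polynomial of $V_{\tilde x}$ factors as $\prod_{i=1}^d\bigl(X-\omega_i(\delta_i(\tilde x))\bigr)$; its roots are by definition the generalized Hodge--Tate weights of $V_{\tilde x}$, and these coincide with those of $\rho_x$ since generalized Hodge--Tate weights depend only on the semisimplification.

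To extend the claim to all of $X(\bar\tau)$, observe that $(\omega_1(\delta_1),\dots,\omega_d(\delta_d))$ is by construction a global analytic function on $X(\bar\tau)$, and the same should hold for the multiset of generalized Hodge--Tate weights of $\rho_x$ as $x$ varies. This is a family version of Sen's theorem, to be deduced either directly from the pseudo-character family $T_{\bar\tau}$ (the coefficients of the Sen polynomial being expressible as $p$-adic limits of values $T_{\bar\tau}(\gamma^n)$, $\gamma \in \Gamma$), or by locally lifting $T_{\bar\tau}$ to a genuine family of Galois representations on a finite cover and invoking the usual family Sen theorem. Since the two analytic functions agree on the Zariski-dense open $X(\bar\tau)^{\rm reg}$ by the first step, they agree everywhere on $X(\bar\tau)$, which is the assertion.

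The main technical obstacle is precisely this second step: producing an analytically-varying Sen polynomial on $X(\bar\tau)$, which carries only a family of pseudo-characters rather than a global family of Galois representations. In practice one either works locally on $X(\bar\tau)$ or passes to the normalization $\Scal(\bar\tau)\to X(\bar\tau)$, where the necessary local lifts to actual representations are accessible, reducing the analytic variation statement to the standard family Sen theorem for representations.
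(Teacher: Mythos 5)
Your proof follows essentially the same route as the paper: both invoke the computation of the Sen polynomial for a trianguline $(\phi,\Gamma)$-module with graded pieces $\Rcal(\delta_i)$ (the paper cites \cite[Proposition 2.3.3]{BellaicheChenevier}, which is precisely the statement you re-derive by noting the Sen operator on $\Rcal(\delta)$ is $\omega(\delta)$), applied after lifting to the cover $\Scal(\bar\tau)$, with the extension to all of $X(\bar\tau)$ left to density. The one thing you do beyond the paper is to explicitly flag the analyticity of the Sen polynomial on $X(\bar\tau)$ as the technical point requiring care; the paper simply states ``the claim follows from this,'' so your more cautious discussion of that step is a welcome elaboration rather than a different approach.
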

\begin{proof}
It follows from \cite[Proposition 2.3.3]{BellaicheChenevier} that 
\[\prod\nolimits _{i=1}^d (T-\omega(\delta_i))\in k(x)[T]\]
is the Sen-polynomial of $\Vcal_{\bar\tau}\otimes k(x')$ for any $x'\in \Scal(\bar\tau)$ mapping to $x$. The claim follows from this. See also \cite[Theorem 3.11]{Berger}.
\end{proof}
The following variant of $X(\bar\tau)$ will be of importance in our further discussion.
\begin{defn}
Let \[Z=\bigcup\nolimits_{i}\{t_i=0\}\subset \Abb_{\Q_p}^d=\big(\Spec(\Q_p[t_1,\dots,t_d])\big)^{\rm ad}.\]
Then define $X^0(\bar\tau)=\underline{\omega}^{-1}(Z)\subset X(\bar\tau)$.
 This is the Zariski closed  subspace of $X(\bar\tau)$ consisting of those representations which have at least one Hodge-Tate weight equal to $0$.
\end{defn}

\subsection{Density of crystalline representations}
In the theory of eigenvarieties for definite unitary groups one demands that there is a Zariski-dense accumulation subset of the eigenvariety, arising from automorphic representations with minimal level at $p$. The local analogue for finite slope spaces of this fact it the density of crystalline representations. This result is a variant of Chenevier's density result \cite{Chenevier}. 

Recall that a Zariski-dense subset $Z$ of an adic space $X$ (locally of finite type over $\Q_p$) is called \emph{accumulation}, if for all $z\in Z$ and any open and  connected $U\subset X$ with $z\in U$, the intersection $Z\cap U$ is Zariski-dense in $U$. 

Further we say that a rigid analytic point $x\in X(\bar\tau)(\bar\Q_p)$ is crystalline, if the representation $\rho_x$  is crystalline\footnote{ i.e. the representation associated to the evaluation $T_{\bar\tau}\otimes k(x)$}.
\begin{prop}
The set of crystalline representations is Zariski-dense and accumulation in the connected components of $X(\bar\tau)$ that contain at least one crystalline point. The same is true for $X^0(\bar\tau)$. 
\end{prop}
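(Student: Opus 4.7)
The plan is to first establish density and accumulation of crystalline points in $\Scal^{\rm ns}(\bar\tau)$ and then transfer these properties to $X(\bar\tau)$ via the finite injective morphism $\pi_{\bar\tau}$ of Theorem $\ref{properness}$. Inside $\Scal_d^{\rm ns}$, consider the set $\Sigma$ of rigid points $(D,\Fil_\bullet,\delta)$ with $\delta_i|_{\Z_p^\times}=x^{k_i}$ for strictly increasing integers $k_1<\cdots<k_d$ and with $\delta_i(p)=\alpha_i$ whose valuations are \emph{non-critical} with respect to the $k_i$, i.e.\ each step of $\Fil_\bullet$ corresponds to a weakly admissible filtered sub-$\phi$-module. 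By Colmez--Fontaine such points arise from genuine crystalline Galois representations, so $\Sigma\subset \Scal_d^{\rm ns,adm}$.

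The set $\Sigma$ is Zariski-dense and accumulation in $\Scal_d^{\rm ns}$ by Chenevier's density argument \cite{Chenevier}: its image in $\Tcal_d^{\rm reg}=\Wcal^d\times \Gbb_m^d$ is Zariski-dense and accumulation, using density of integer weights in $\Wcal$ and the Zariski-openness of the non-critical condition on $\Gbb_m^d$, and the smooth inductive construction $\Scal_d^{\rm ns}\to \Tcal_d^{\rm reg}$ recalled in Section~2 is flat, which transfers both properties upstairs. Intersecting with the open and closed subspace $\Scal^{\rm ns}(\bar\tau)\subset \Scal_d^{\rm ns,adm}$ then gives a Zariski-dense accumulation subset of crystalline points in $\Scal^{\rm ns}(\bar\tau)$.

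To transfer to $X(\bar\tau)$ I use that $\pi_{\bar\tau}\colon \Scal^{\rm ns}(\bar\tau)\to X(\bar\tau)^{\rm reg}$ is finite, injective and surjective, and that $X(\bar\tau)$ is the Zariski-closure of $X(\bar\tau)^{\rm reg}$. Finiteness and surjectivity send Zariski-dense subsets to Zariski-dense subsets, giving density in each connected component of $X(\bar\tau)$ that meets the crystalline locus. For accumulation at a crystalline $y\in U\subset X(\bar\tau)$ with $U$ connected and open, let $x=\pi_{\bar\tau}^{-1}(y)$ (unique by injectivity) and let $V$ be the connected component of $\pi_{\bar\tau}^{-1}(U)$ containing $x$. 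Since components of $X(\bar\tau)$ are irreducible, $U$ is irreducible, and since $\pi_{\bar\tau}^{-1}(U)$ has finitely many components whose closed images cover $U$, irreducibility forces some component to map onto $U$; injectivity at $y$ forces this component to be $V$. Density of crystalline points in $V$ therefore pushes forward to density in $U$.

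For $X^0(\bar\tau)=\underline{\omega}^{-1}(Z)$ the same argument applies to $\Sigma^0=\{\delta\in\Sigma\mid k_i=0\ \text{for some}\ i\}$, which by Lemma $\ref{Senpoly}$ corresponds to crystalline representations with a vanishing Hodge-Tate weight, so density in the hyperplane arrangement $\omega^{-1}(Z)\cap \Tcal_d^{\rm reg}$ pulls back and pushes forward exactly as before. The main obstacle is arranging the non-critical slope condition densely enough on $\Gbb_m^d$ with respect to the chosen integer weights; this is the technical heart of Chenevier's density theorem, and once in place all remaining steps are formal.
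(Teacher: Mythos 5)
Your strategy is essentially the paper's: invoke Chenevier's density theorem for $\Scal_d^{\rm ns}$ and push the crystalline points forward along $\pi_{\bar\tau}$. Two remarks.

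First, a stylistic point. The paper simply cites \cite[Theorem 3.14]{Chenevier} for density and accumulation in $\Scal_d^{\rm ns}$ and then observes that $\pi_{\bar\tau}\colon \Scal^{\rm ns}(\bar\tau)\to X(\bar\tau)^{\rm reg}$ is a \emph{homeomorphism} (finite $+$ injective $+$ surjective onto its Zariski-closed image), so density and accumulation transfer to $X(\bar\tau)^{\rm reg}$ for free. Your argument with ``finitely many components of $\pi_{\bar\tau}^{-1}(U)$ whose closed images cover $U$'' reproves a weaker version of this with more work; you may as well use the homeomorphism directly, and then you needn't worry about whether $\pi_{\bar\tau}^{-1}(U)$ has finitely many components (which is not clear a priori for an arbitrary open $U$).

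Second, and more substantively, there is a genuine gap in your accumulation step. You write ``let $x=\pi_{\bar\tau}^{-1}(y)$ (unique by injectivity),'' which tacitly assumes $y$ lies in $X(\bar\tau)^{\rm reg}=\operatorname{im}\pi_{\bar\tau}$. But the proposition asserts accumulation at \emph{every} crystalline point of the relevant components, and crystalline points need not lie in $X(\bar\tau)^{\rm reg}$: a crystalline $\rho$ with Hodge--Tate weights $k_i$ and refinement $(\alpha_i)$ has parameter $(\delta_i)$ satisfying $\delta_i/\delta_j=x^{-(k_i-k_j)}\cdot(\text{unramified twist by }\alpha_i/\alpha_j)$, and this lands outside $\Tcal_d^{\rm reg}$ precisely when $\alpha_i/\alpha_j\in p^{\Z}$ matches the weight difference — which happens for critical refinements. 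For such $y$ your fiber $\pi_{\bar\tau}^{-1}(y)$ is empty and the argument as written does not start. The paper handles this explicitly with the final sentence of its proof: any connected open neighborhood $U$ of such a $y$ still meets $X(\bar\tau)^{\rm reg}$ in a Zariski-dense open, which contains regular crystalline points, and accumulation at one of those regular crystalline points (applied to $U$) gives Zariski-density of crystalline points in $U$. You should add this step; without it the accumulation claim is only established on $X(\bar\tau)^{\rm reg}$.

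The density claim is fine, and the treatment of $X^0(\bar\tau)$ via $\Sigma^0$ and Lemma $\ref{Senpoly}$ is the right idea, subject to the same caveat about boundary crystalline points.
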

\begin{proof}
It follows from \cite[Theorem 3.14]{Chenevier} that the crystalline points are Zariski-dense and accumulation in $\Scal_d^{\rm ns}$. Hence the crystalline points are Zariski-dense and accumulation in every component of $\Scal^{\rm ns}(\bar\tau)$ containing at least one crystalline point. As $\Scal^{\rm ns}(\bar\tau)\rightarrow X(\bar\tau)^{\rm reg}$ is a homoemorphism, the crystalline points are Zariski-dense and accumulation in the corresponding subset of  $X^{\rm reg}(\bar\tau)$. 
The claim follows from this, as any neighborhood of a crystalline point in $X(\bar\tau)\backslash X^{\rm reg}(\bar\tau)$ contains a Zariski-dense set of regular crystalline points.
\end{proof}
\begin{defn}
We denote by $\tilde X(\bar\tau)$ the union of all components of $X(\bar\tau)$ which contain a crystalline point. Similarly we denote by $\tilde\Scal(\bar\tau)$ the union of connected components of $\Scal(\bar\tau)$ containing a crystalline point. 
\end{defn}
We recall from \cite[Definition 4.2.3]{BellaicheChenevier} the notion of a refined family of Galois representations. 
\begin{defn} \label{defnrefinedfamily}
A \emph{family of refined $p$-adic representations} of dimension $d$ consists of a reduced rigid analytic space $X$ (or a reduced adic space locally of finite type over $\Q_p$) together with a continuous pseudo-character $T:\mathcal{G}_p\rightarrow \Ocal_X$ and analytic functions $\kappa_1,\dots,\kappa_d, F_1,\dots,F_d\in \Gamma(X,\Ocal_X)$ and a Zariski-dense subset $Z\subset X(\bar\Q_p)$ such that
\begin{enumerate}
\item[(i)] For all $x\in X$ the Hodge-Tate-Sen weights of $\rho_x$ are (with multiplicity) $\kappa_1(x),\dots,\kappa_d(x)$.
\item[(ii)] The representations $\rho_z$ are crystalline for all $z\in Z$.
\item[(iii)] If $z\in Z$, then $\kappa_1(z)<\kappa_2(z)<\dots<\kappa_d(z)$.
\item[(iv)] The eigenvalues of the crystalline Frobenius acting on $D_{\rm cris}(\rho_z)$ are distinct and given by $(p^{\kappa_1(z)}F_1(z),\dots, p^{\kappa_d(z)}F_d(z))$.
\item[(v)] For each $n$ there exists a continuous character $\delta: \Z_p^\times \rightarrow \Gamma(X,\Ocal_X^\times)$ whose derivative at $1\in\Z_p^\times$ is the map $\kappa_n$ and whose evaluation at any point $z\in Z$ is the evaluation of the $\kappa_n(z)$-th power.  
\item[(vi)] For any non-negative integer $C$ let $Z_C$ denote the set of $z\in Z$ such that 
\begin{align*}
\kappa_2(z)-\kappa_1(z)&>C\ \text{and}\\
\kappa_{n+1}(z)-\kappa_n(z)&>C(\kappa_n(z)-\kappa_{n-1}(z))\ \text{for all}\ n=2,\dots, d-1.
\end{align*}
Then $Z_C$ accumulates at every point of $Z$. 
\end{enumerate}
\end{defn}

Recall the definition of the weights $\omega_1,\dots,\omega_d$ on $X(\bar\tau)$. We set  \[\Zcal=\{x\in X(\bar\tau)(\bar\Q_p)\mid \rho_x\ \text{crystalline and}\ \omega_1(x)<\dots,<\omega_d(x)\}.\] 
\begin{prop}
The family \[(\tilde X(\bar\tau), T_{\bar\tau}|_{\tilde X(\bar\tau)}, \Zcal, \delta_1(p),\dots,\delta_d(p),\omega_1,\dots,\omega_d)\]
is a family of refined $p$-adic representations. 
\end{prop}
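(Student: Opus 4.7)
The plan is to verify the six conditions of Definition $\ref{defnrefinedfamily}$ in turn, with the choice $\kappa_i = \omega_i$, $F_i = \delta_i(p)$, and $Z = \Zcal$, all of which are naturally defined on $\tilde X(\bar\tau) \subset \Xfrak_{\bar\tau} \times \Tcal^d$ via the second projection. The Zariski density of $\Zcal$ in $\tilde X(\bar\tau)$ follows from the preceding proposition (density and accumulation of crystalline points on each component containing a crystalline point) together with the observation that $\omega_1 < \dots < \omega_d$ is a Zariski-open condition, and that it is nonempty on $\tilde X(\bar\tau)$ by construction. With this setup, condition (i) is exactly Lemma $\ref{Senpoly}$, while (ii) and (iii) are built into the definition of $\Zcal$.

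For condition (v), I would take $\delta := \delta_n|_{\Z_p^\times}$, viewed as a global section of $\Wcal$ over $\tilde X(\bar\tau)$. The compatibility of the derivative at $1$ with $\omega_n$ is immediate from the defining formula $(\ref{weight})$. To identify $\delta$ with the $\omega_n(z)$-th power map at $z \in \Zcal$, one uses that the triangulation on $\mathbf{D}_{\rig}^\dagger(\rho_z)$ restricts to the rank-one crystalline subquotient $\Rcal(\delta_i|_z)$, which forces $\delta_i|_{\Z_p^\times}$ at $z$ to be the algebraic character $x \mapsto x^{\omega_i(z)}$; this is the standard characterization of crystalline rank-one $(\phi,\Gamma)$-modules.

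For condition (iv), I would use that the triangulation on $\mathbf{D}_{\rig}^\dagger(\rho_z)$ refines to a complete $\phi$-stable flag in $D_{\rm cris}(\rho_z)$ with graded pieces $D_{\rm cris}(\Rcal(\delta_i|_z))$. A direct computation for rank-one crystalline $(\phi,\Gamma)$-modules, contained in Berger's survey \cite{Berger}, shows that the Frobenius eigenvalue on $D_{\rm cris}(\Rcal(\delta_i|_z))$ equals $\delta_i(p)(z)\cdot p^{\omega_i(z)} = F_i(z)\,p^{\kappa_i(z)}$, matching the desired formula. The distinctness of these eigenvalues is forced by the regularity condition $\delta_i/\delta_j \in \Tcal^{\rm reg}$ for $i \neq j$.

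For condition (vi), I would pull the question back through the homeomorphism $\Scal^{\rm ns}(\bar\tau) \to X(\bar\tau)^{\rm reg}$ and invoke Chenevier's density theorem \cite[Theorem 3.14]{Chenevier}, which asserts exactly this accumulation of the subsets $Z_C$ inside $\Scal_d^{\rm ns}$; the extension from accumulation at regular crystalline points of $X(\bar\tau)^{\rm reg}$ to arbitrary crystalline points of $\tilde X(\bar\tau)$ is handled as in the proof of the preceding proposition. The main technical obstacle is condition (iv): it requires careful bookkeeping of sign conventions between the present paper's normalization of $\omega$ (with a minus sign) and the normalizations implicit in \cite{BellaicheChenevier} and \cite{Berger}, but no new ideas are needed beyond standard facts about crystalline rank-one $(\phi,\Gamma)$-modules.
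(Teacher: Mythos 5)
Your proposal is correct and follows essentially the same route as the paper: both verify the six axioms directly, with $\kappa_i=\omega_i$, $F_i=\delta_i(p)$ and $Z=\Zcal$, using Lemma~\ref{Senpoly} for (i), the definition of $\Zcal$ for (ii) and (iii), the derivative formula $(\ref{weight})$ for (v), and Chenevier's density results together with the preceding proposition for (vi) and the Zariski density. The only cosmetic difference is in (iv): the paper cites \cite[Proposition 2.4.1]{BellaicheChenevier} outright, whereas you unwind the same content via the rank-one crystalline computation from \cite{Berger}; these are the same underlying fact, and your closing remark about sign bookkeeping correctly flags the one place where conventions must be matched.
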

\begin{proof}
It is clear that the sets $\Zcal_C$ accumulate at all points of $\Zcal$ and that $\Zcal$ is Zariski-dense in $\tilde X(\bar\tau)$. Further the weights $\omega_i$ are the derivations of $\delta_i|_{\Z_p^\times}$ at the origin. 
Condition (i) now follows from Lemma $\ref{Senpoly}$ and (iv) follows from \cite[Proposition 2.4.1]{BellaicheChenevier}.
\end{proof}

\begin{cor} \label{universalrefined}
The finite slope space $\tilde X(\bar\tau)$ is the universal refined family with residual pseudo-character $\bar\tau$ in the following sense: Let $(X,T,Z,F_\bullet, \kappa_\bullet)$ be a refined family such that $T$ reduces to $\bar\tau$ modulo $\Ocal_X^{++}$. Then there exists unique morphism 
$f:X\rightarrow \tilde X(\bar\tau)$ such that $T=f^\ast T_{\bar\tau}$ and $Z\subset f^{-1}(\Zcal)$ and such that $\kappa_n=f^\ast \omega_n$ and $F_n=f^\ast\delta_n(p)$. 
\end{cor}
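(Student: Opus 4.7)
The plan is to construct $f$ by assembling from the refined structure a family of non-split trianguline $(\phi,\Gamma)$-modules over a Zariski-open dense subset $U\subset X$, pushing it to $\Scal^{\rm ns}(\bar\tau)$ via the universal property of $\Scal_d^{\rm ns}$, and then factoring the induced map $X\to\Xfrak_{\bar\tau}\times\Tcal^d$ through $X(\bar\tau)$ by Zariski-closedness. Throughout I use the hypothesis that $T$ reduces to $\bar\tau$ modulo $\Ocal_X^{++}$ in order to land in the correct component.

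First I would assemble continuous characters $\delta_1,\dots,\delta_d\colon \Q_p^\times \to \Gamma(X,\Ocal_X^\times)$ by combining condition (v) on $\Z_p^\times$ with the values $\delta_n(p):=F_n$, obtaining a morphism $X\to\Tcal^d$. For $C$ sufficiently large the subset $Z_C\subset Z$ lands in $\Tcal_d^{\rm reg}$: the regularity constraints $\delta_i/\delta_j\ne x^{-k},\chi x^k$ unwind to numerical gap conditions on the weights $\kappa_n(z)$ that are exactly what the accumulation/gap condition (vi) enforces. Since $Z_C$ accumulates at the Zariski-dense set $Z$ it is itself Zariski-dense in $X$, so the preimage $U\subset X$ of $\Tcal_d^{\rm reg}$ under $X\to\Tcal^d$ is Zariski-open and dense.

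At each $z\in Z_C$ (for $C$ large), the refinement together with the distinctness of Frobenius eigenvalues (iv) singles out a unique non-critical triangulation of ${\bf D}_{\rm rig}^\dagger(\rho_z)$ with parameters $(\delta_1(z),\dots,\delta_d(z))$, and the weight gaps in (vi) force each successive extension to be non-split. I would then invoke Chenevier's interpolation argument (the core of \cite[Section 3]{Chenevier}) to glue these pointwise triangulations into a family of non-split trianguline $(\phi,\Gamma)$-modules over $U$. This yields a morphism $U\to\Scal_d^{\rm ns}$ over $\Tcal_d^{\rm reg}$; since the underlying family visibly comes from Galois representations with trace reducing to $\bar\tau$, it factors through $\Scal^{\rm ns}(\bar\tau)\subset \Scal_d^{\rm ns,adm}$. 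Composing with $\pi_{\bar\tau}$ gives a map $U\to X(\bar\tau)^{\rm reg}$.

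Because $X$ is reduced, the canonical morphism $X\to\Xfrak_{\bar\tau}\times\Tcal^d$ defined by $(T,\delta_\bullet)$ sends the Zariski-dense $U$ into the Zariski-closed subspace $X(\bar\tau)$, so it factors through a morphism $f\colon X\to X(\bar\tau)$. The image lies in $\tilde X(\bar\tau)$ because Zariski-density of the crystalline points of $Z$ puts one in every connected component of $X$, and its image is crystalline in $X(\bar\tau)$. The compatibilities $T=f^\ast T_{\bar\tau}$, $\kappa_n=f^\ast\omega_n$, $F_n=f^\ast\delta_n(p)$ and $Z\subset f^{-1}(\Zcal)$ are built into the construction. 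Uniqueness follows from the injectivity in Theorem \ref{properness}: at any $z\in Z_C$ the refinement data uniquely determine the image in $\Scal^{\rm ns}(\bar\tau)$, and Zariski-density of $Z$ in $X$ pins down $f$ globally. The hardest step is the interpolation in the third paragraph, namely turning pointwise triangulations at $Z_C$ into a family of triangulations over $U$; this is essentially Chenevier's density/globalization argument, for which the accumulation property (vi) is precisely the key hypothesis that makes the method available.
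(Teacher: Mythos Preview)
Your route is considerably more elaborate than the paper's, and it contains a real gap. A refined family in the sense of Definition~\ref{defnrefinedfamily} carries only a pseudo-character $T:\mathcal{G}_p\to\Ocal_X$, not a family of Galois representations; consequently there is no family of $(\phi,\Gamma)$-modules over $X$ (or over your $U$) to triangulate. Your third paragraph speaks of ``the underlying family'' of Galois representations and of gluing pointwise triangulations into a family over $U$, but there is nothing to glue them on. Chenevier's construction in \cite[\S3]{Chenevier} builds $\Scal_d^{\rm ns}$ as a moduli space; it does not produce a triangulation on an arbitrary base carrying only a pseudo-character. One could try to repair this by pulling back to a framed space such as $\tilde\Xfrak_{\bar\tau}^\square$, but this adds another layer and is not what you wrote.

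More to the point, none of this machinery is needed. The paper's proof is a two-line Zariski-density argument: the data $(T,F_\bullet,\epsilon_\bullet)$ already define a morphism $f:X\to\Xfrak_{\bar\tau}\times\Tcal^d$ (your first paragraph does this correctly), and to see that $f$ factors through the Zariski-closed subspace $\tilde X(\bar\tau)$ it suffices, since $X$ is reduced, to check this on the Zariski-dense set $Z$. But each $z\in Z$ is a crystalline point satisfying the regularity conditions (iii) and (iv), and any such point visibly lies in $\tilde X(\bar\tau)$ by the very construction of the latter. No triangulation in families, no map to $\Scal_d^{\rm ns}$, and no appeal to Theorem~\ref{properness} are required; uniqueness comes simply from the Zariski-density of $Z$ and the fact that the characters $\epsilon_n$ of condition (v) are pinned down by their values on $Z$. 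The ``hardest step'' you identify is precisely the step the paper avoids.
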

\begin{proof}
By assumption $\kappa_n$ is the derivation at the origin of a character
 \[\epsilon_n:\Z_p^\times \longrightarrow \Gamma(X(\bar\tau),\Ocal^\times_{X(\bar\tau)})\]
 such that the evaluation at $z\in Z$ is the $\kappa_n(z)$-th power. As $Z$ is Zariski-dense, this continuous family of characters is unique. These data define a map
 \[f=(T,F_\bullet, \epsilon_\bullet):X\longrightarrow \Xfrak_{\bar\tau}\times \Gbb_m^d\times \Wcal^d=\Xfrak_{\bar\tau}\times \Tcal^d.\]
 As $X$ is reduced it is enough to show that this map factors topologically over $\tilde X(\bar\tau)$. However, this follows from the fact that $\tilde X(\bar\tau)\subset \Xfrak_{\bar\tau}\times \Tcal^d$ is Zariski-closed and contains all crystalline representations with the regularity conditions (iii) and (iv), and the density of crystalline representations in $X$. To prove  $\kappa_n=f^\ast \omega_n$ and $F_n=f^\ast\delta_n(p)$ is is enough to consider their evaluation at the points in $Z$, where the claim is obvious. 
 \end{proof}

\subsection{Comparison with Kisin's finite slope subspace}
In this section we assume that $d=2$ and want to link our construction to Kisin's finite slope subspace, as defined in \cite{KisinsXfs}. As our space is defined as a subspace of $\Xfrak_{\bar\tau}\times\Tcal^d$ while Kisin's is a subspace of $\Xfrak_{\bar\rho}^{\rm vers}\times \Gbb_m$ (where $\Xfrak_{\bar\rho}^{\rm vers}$ is the generic fiber of the versal deformation ring of a residual representation $\bar\rho$), we first have to adapt our definitions to Kisin's setting. Let
\[\bar \rho:\mathcal{G}_p\longrightarrow \GL_2(\Fbb)\]
be a continuous representation of $\mathcal{G}_p$ with coefficients in a finite field $\Fbb$. For simplicity we assume that $\bar\rho$ is not a twist of the trivial representation. 
We write $R_{\bar\rho}^{\rm vers}$ for the versal deformation ring of $\bar\rho$ and $\Xfrak_{\bar\rho}^{\rm vers}$ for the generic fiber of $\Spf R_{\bar\rho}^{\rm vers}$. Denote by $\bar\tau$ the trace of $\bar\rho$. Then we have a morphism
\[\Xfrak_{\bar\rho}^{\rm vers}\longrightarrow \Xfrak_{\bar\tau}\]
and we denote by $X(\bar\rho)\subset \Xfrak_{\bar\rho}^{\rm vers}\times \Tcal_d$ the pullback of $X(\bar\tau)$ along this map. The pullback of $X^0(\bar\tau)\subset X(\bar\tau)$ will we denoted by $X^0(\bar\rho)$.  We write $\Vcal_{\bar\rho}$ for the family of $\mathcal{G}_p$-representations on $X(\bar\rho)$ (or $X^0(\bar\rho)$) given by the versal deformation of $\bar\rho$. 
\begin{prop}The morphism
\[f: X(\bar\rho)\longrightarrow \Xfrak_{\bar\rho}^{\rm vers}\times\Tcal^2\longrightarrow \Xfrak_{\bar\rho}^{\rm vers}\times \Gbb_m\]
induced by the projection $\Tcal^2=\Wcal^2\times\Gbb_m^2\rightarrow \Gbb_m$ to the first factor of the $\Gbb_m^2$, is finite and generically injective. The same is true for the restriction $f|_{X^0(\bar\rho)}$.
\end{prop}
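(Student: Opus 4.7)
The plan is to show that every coordinate forgotten by the projection $\Xfrak_{\bar\rho}^{\rm vers}\times\Tcal^2\to\Xfrak_{\bar\rho}^{\rm vers}\times\Gbb_m$ is determined, up to finite ambiguity, by the Galois representation $\Vcal_{\bar\rho}$ together with $\delta_1(p)$. First, taking determinants in the universal triangulation gives $\det{\bf D}_{\rig}^\dagger(\Vcal_{\bar\rho})\cong\Rcal(\delta_1\delta_2)$ on the regular locus $X(\bar\rho)^{\rm reg}$, and this identity extends to $X(\bar\rho)$ by Zariski-density. Hence $\delta_2=\epsilon\cdot\delta_1^{-1}$, where $\epsilon$ is the character of $\Q_p^\times$ associated to the rank-one module $\det{\bf D}_{\rig}^\dagger(\Vcal_{\bar\rho})$. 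Forgetting $\delta_2$ therefore realizes $X(\bar\rho)$ as a closed subspace of $\Xfrak_{\bar\rho}^{\rm vers}\times\Tcal$.

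Next, by Lemma \ref{Senpoly} the weight $\omega(\delta_1)$ is a root of the Sen polynomial of $\Vcal_{\bar\rho}$, which is a monic polynomial of degree two with coefficients in $\Gamma(\Xfrak_{\bar\rho}^{\rm vers},\Ocal)$. Writing $\Wcal=\bigsqcup_\chi\Ubb_\chi$ as a disjoint union of $p-1$ open unit disks indexed by characters of $\mu_{p-1}$, one checks that on each $\Ubb_\chi$ the map $\omega\colon\Ubb_\chi\to\Abb^1_{\Q_p}$ is injective (via injectivity of the $p$-adic logarithm on $1+p\Z_p$). Working affinoid-locally on $\Xfrak_{\bar\rho}^{\rm vers}$, where the Sen coefficients are bounded and the roots thus lie in a closed subdisk of $\Abb^1_{\Q_p}$, one deduces that the locus $\{(\rho,\delta_1|_{\Z_p^\times}) : \omega(\delta_1) \text{ is a root of } {\rm Sen}(\Vcal_{\bar\rho})\}\subset \Xfrak_{\bar\rho}^{\rm vers}\times\Wcal$ is finite over $\Xfrak_{\bar\rho}^{\rm vers}$ of degree at most $2(p-1)$. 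Combining with the first paragraph, $X(\bar\rho)$ embeds into this finite cover of $\Xfrak_{\bar\rho}^{\rm vers}$ times $\Gbb_m$, and the composition $f$ is finite.

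Generic injectivity is established on the Zariski-dense subset of regular crystalline points provided by the density proposition together with the accumulation property. At such a point $x$ with pairwise distinct Hodge--Tate weights and pairwise distinct crystalline Frobenius eigenvalues, the characters $\delta_i$ are locally algebraic crystalline, so $\delta_i|_{\mu_{p-1}}$ is determined by the integer weight $\omega(\delta_i)$ (via $\zeta\mapsto\zeta^{-\omega(\delta_i)}$), and the two values $p^{\omega(\delta_i)}\delta_i(p)$ are precisely the two crystalline Frobenius eigenvalues on $D_{\rm cris}(\rho_x)$. Fixing $\delta_1(p)$ thus selects a unique root $\omega(\delta_1)$ of the Sen polynomial, hence a unique $\delta_1$ and a unique $\delta_2=\epsilon\delta_1^{-1}$, so the fiber of $f$ is a single point on this Zariski-dense subset. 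The same reasoning applies verbatim to $X^0(\bar\rho)\subset X(\bar\rho)$, which is Zariski-closed and still contains a Zariski-dense set of crystalline points. The main technical obstacle is to make the finiteness argument in the second paragraph rigorous as a morphism of adic spaces (not merely fiber-by-fiber), given that $\omega\colon\Wcal\to\Abb^1_{\Q_p}$ is not proper globally.
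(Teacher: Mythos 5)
Your finiteness argument has a gap that is more serious than the one you flag, and it occurs earlier: the claim that $\omega\colon\Ubb_\chi\to\Abb^1_{\Q_p}$ is injective is false. The $p$-adic logarithm is injective on $1+p\Z_p$, but $\Ubb_\chi$ is the full open unit disc over $\Q_p$, and its $\bar\Q_p$-points parametrize characters with $\delta(1+p)\in 1+\mfrak_{\bar\Q_p}$; in particular $\delta(1+p)$ may be any $p$-power root of unity. Two characters that differ by a finite-order character have the same weight $\omega$, so $\omega^{-1}(\kappa)\cap\Ubb_\chi$ is (countably) infinite for every $\kappa$. Over a point $\rho$ with a Hodge--Tate weight $0$ (the defining situation on $X^0(\bar\rho)$), the locus $\{(\rho,\delta)\colon\omega(\delta)\text{ is a Sen root of }\rho\}$ therefore contains $\{\rho\}$ times infinitely many finite-order characters and is not finite over $\Xfrak_{\bar\rho}^{\rm vers}$. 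There is no ``finite cover of degree $\leq 2(p-1)$'', so the embedding argument collapses before one even reaches the properness issue you mention.

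The conceptual point is that the Sen polynomial controls only the derivative $\omega(\delta_1)$ of $\delta_1|_{\Z_p^\times}$, leaving an infinite torsion ambiguity, whereas the proposition requires controlling $\delta_1|_{\Z_p^\times}$ itself. The paper's proof resolves this by reading $\delta_1|_{\Z_p^\times}$ off the $\Gamma$-action on the $\Phi$-stable line of prescribed $\Phi$-eigenvalue, which is unique over an affinoid of the target because the underlying $\phi$-module extension is generically non-split. That gives quasi-compactness of $g\colon X(\bar\rho)\to\Xfrak_{\bar\rho}^{\rm vers}\times\Gbb_m^2$, and finiteness then follows from partial properness (automatic from the factorization through $\Xfrak_{\bar\rho}^{\rm vers}\times\Tcal^2$) together with quasi-finiteness. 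Your determinant reduction agrees with the paper's, and your generic-injectivity argument at crystalline points is a serviceable alternative to the paper's observation that $g$ is a bijection on rigid points away from the diagonal of $\Gbb_m^2$ and at most $2$-to-$1$ on the diagonal; but the Sen-polynomial finiteness step needs to be replaced entirely, not just made rigorous.
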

\begin{proof}
It is enough to show that the morphism \[g: X(\bar\rho)\longrightarrow \Xfrak_{\bar\rho}^{\rm vers}\times \Gbb_m^2\]
has the required properties, as we can recover the second eigenvalue of $\Phi$ by $(\rho,\lambda)\mapsto (\rho,\lambda,\lambda^{-1}\delta_{\det\rho}(p))$, where 
\begin{equation}\label{deltadet}
{\bf D}_{\rig}^\dagger(\det \rho)=\Rcal_{\Xfrak_{\bar\rho}^{\rm vers}}(\delta_{\det \rho}).
\end{equation}
Now, given an affinoid open subspace $U\subset \Xfrak_{\bar\rho}^{\rm vers}\times \Gbb_m^2$ there is a unique $\Phi$-stable subspace of ${\bf D}^\dagger_{\rig}(\Vcal_{\bar\rho}|_U)$, as the extension of the underlying $\phi$-modules is generically non-split. We then define $\delta_1|_{\Z_p^\times}$ as the unique character such that $\Gamma$ acts on a generator of this subspace via multiplication with $\delta_1$. Then $\delta_2$ is determined by $\delta_1\delta_2=\delta_{\det\rho}$ and $(\ref{deltadet})$. It follows that the map $g$ is quasi-compact and hence proper, as it it obviously partially proper.
It is clear that away from the diagonal of $\Gbb_m^2$ the preimage of a rigid analytic point consists just of one point. However, as $\bar\rho$ is not a direct sum of two equal characters, so is every representation $\rho\in \Xfrak_{\bar\rho}^{\rm vers}$ and hence the pre-image of the points on the diagonal of $\Gbb_m^2$ consists of at most $2$ points. 
\end{proof}

For the rest of this subesction, we write $Y(\bar\rho)$ for the image of $X^0(\bar\rho)$ under $f$ and $\tilde Y(\bar\rho)$ for the  union of the irreducible components $Y(\bar\rho)$ that contain a crystalline point. 
Recall that Kisin \cite[5]{KisinsXfs} has defined a finite slope space $X_{\rm fs}$ as a subspace of the product $\Xfrak_{\bar\rho}^{\rm vers}\times \Gbb_m$. 
This subspace is characterized by the following conditions: Let $X$ be an reduced adic space locally of finite type equipped with a family of $\mathcal{G}_p$-representations $\Vcal$ on the trivial vector bundle of rank $2$ and let $Y\in \Gamma(X,\Ocal_X^\times)$ be an invertible global section on $X$. Then there is a unique closed subspace $X_{\rm fs}\subset X$ such that 
\begin{enumerate}
\item[(i)] The Sen-Polynomial $P(T)$ of the representations $\rho\in X$ factors as $P(T)=TQ(T)$, when restricted to $X_{\rm fs}$.
\item[(ii)] For all integers $j\leq 0$ the subspace $X_{{\rm fs}, Q(j)}=\{x\in X_{\rm fs}\mid Q(j)(x)\neq 0\}$ is scheme-theoretically dense in $X_{\rm fs}$.
\item[(iii)] For any affinoid algebra $A$ and any $Y$-small map $f: \Spa(A,A^+)\rightarrow X$ such that $f$ factors over $X_{Q(j)}$ for every integer $j\leq 0$, the map $f$ factors over $X_{\rm fs}$ if and only if every $A$-linear $\mathcal{G}_p$-invariant period 
\[h_A: \Gamma(\Spa(A,A^+),f^\ast\Vcal)\longrightarrow B^+_{\rm dR}\hat\otimes_{\Q_p} A\]
factors over $(B_{\rm cris}\hat\otimes_{\Q_p}A)^{\phi=f^\ast Y}\subset B_{\rm dR}^+\hat\otimes_{\Q_p}A$.
\end{enumerate}
Here the condition being $Y$-small means that there exists a finite extension $K$ over $\Q_p$ and $\lambda\in A\otimes_{\Q_p}K$ such that $K(\lambda)$ is a product of finite field extensions of $K$ and $Y\lambda^{-1}-1$ is topologically nilpotent in $\Spa(A, A^+)\otimes K$.

 The following result shows that Kisins notion coincides with our definition. 
\begin{prop} With the above notations $\tilde Y(\bar\rho)$ is precisely given by those irreducible components of 
\[\big(\Xfrak_{\bar\rho}^{\rm vers}\times \Gbb_m\big)_{\rm fs}\]
that contain at least one crystalline point. 
\end{prop}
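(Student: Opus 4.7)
The plan is to verify Kisin's three defining conditions for the finite slope subspace directly on $\tilde Y(\bar\rho)$, and then to use the universal property of $\tilde X(\bar\tau)$ (Corollary \ref{universalrefined}) to obtain the reverse inclusion.

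First I would show $\tilde Y(\bar\rho)\subset (\Xfrak_{\bar\rho}^{\rm vers}\times\Gbb_m)_{\rm fs}$. By construction the family of Galois representations on $\tilde Y(\bar\rho)$ carries a tautological triangulation
\[
0\longrightarrow \Rcal_X(\delta_1)\longrightarrow {\bf D}_{\rig}^\dagger(\Vcal_{\bar\rho})\longrightarrow \Rcal_X(\delta_2)\longrightarrow 0
\]
with $\delta_1(p)=Y$; on the components coming from $X^0(\bar\rho)$ one has $\omega(\delta_1)=0$ (possibly after relabelling the two Sen weights on certain components, which is harmless since $\tilde Y(\bar\rho)$ is a union of irreducible components and the labellings agree on the Zariski-dense crystalline locus). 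Lemma \ref{Senpoly} then yields Kisin's condition (i): the Sen polynomial factors as $T\cdot(T-\omega(\delta_2))$, so $Q(T)=T-\omega(\delta_2)$. Condition (ii) --- scheme-theoretic density of $\{Q(j)\neq 0\}$ for every non-positive integer $j$ --- follows from the Zariski-density and accumulation of crystalline points with $\omega(\delta_2)$ a sufficiently large positive integer on every component of $\tilde Y(\bar\rho)$.

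The substantive content is Kisin's condition (iii): for a $Y$-small affinoid map $f:\Spa(A,A^+)\to \tilde Y(\bar\rho)$, every $\mathcal{G}_p$-invariant $A$-linear period $h_A$ must factor through $(B_{\rm cris}\hat\otimes_{\Q_p}A)^{\phi=f^\ast Y}$. The sub-family $\Rcal_A(\delta_1)\subset f^\ast {\bf D}_{\rig}^\dagger(\Vcal_{\bar\rho})$ has Sen weight $0$ and Frobenius-eigencharacter $f^\ast Y$; applying Berger's comparison functor $D\mapsto D[1/t]^{\Gamma=1}$ in families, together with the $Y$-smallness hypothesis which forces $(f^\ast Y)^{-1}\delta_1(p)-1$ to be topologically nilpotent, produces a canonical period in the required Frobenius eigenspace. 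The hard part is to see that \emph{every} invariant period factors through this one; I would reduce this, via Zariski-density of rigid points combined with $Y$-smallness, to the pointwise statement on a trianguline representation with $(\delta_1,\delta_2)\in\Tcal_2^{\rm reg}$ and $\omega(\delta_1)=0$, where any second period would produce either a crystalline splitting of the extension (excluded by the non-split condition defining $\Scal_d^{\rm ns}$) or a second crystalline subobject of weight $0$ and Frobenius eigenvalue $\delta_1(p)$ (excluded by the regularity of $\delta_1/\delta_2$).

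For the reverse inclusion, let $W$ be an irreducible component of $(\Xfrak_{\bar\rho}^{\rm vers}\times\Gbb_m)_{\rm fs}$ that contains a crystalline point. Kisin's conditions equip $W$ with the versal family of Galois representations, with a distinguished Sen weight equal to $0$, with the finite-slope Frobenius eigenvalue $Y$ (and its complement $Y'=\delta_{\det\rho}(p)\cdot Y^{-1}$), and with the second Sen weight $\omega_2$ extracted from the factorisation of the Sen polynomial. Accumulation of crystalline points with non-critical refinement at the given crystalline point is standard on Kisin's space; after fixing such an accumulation set $Z\subset W(\bar\Q_p)$, I would verify the axioms of Definition \ref{defnrefinedfamily} for the tuple $(W,\operatorname{tr}\Vcal,Z,(Y,Y'),(0,\omega_2))$, the existence of the required characters in axiom (v) being extracted from Kisin's $B_{\rm cris}^{\phi=Y}$-period and from $\det\rho$. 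Corollary \ref{universalrefined} then produces a unique morphism $W\to \tilde X(\bar\tau)$ compatible with all these data, and pullback along $\Xfrak_{\bar\rho}^{\rm vers}\to \Xfrak_{\bar\tau}$ together with the map $f$ of the preceding proposition lifts this to a morphism $W\to \tilde Y(\bar\rho)\subset \Xfrak_{\bar\rho}^{\rm vers}\times\Gbb_m$. Irreducibility of $W$, together with equality of the two morphisms $W\rightrightarrows \Xfrak_{\bar\rho}^{\rm vers}\times\Gbb_m$ on the Zariski-dense crystalline locus, implies $W\subset \tilde Y(\bar\rho)$, and combining the two inclusions concludes the proof.
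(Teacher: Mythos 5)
Your proof takes a genuinely different route from the paper's, and at its two crucial points it glosses over exactly the content that the paper outsources to external theorems. For the inclusion $\tilde Y(\bar\rho)\subset\big(\Xfrak_{\bar\rho}^{\rm vers}\times\Gbb_m\big)_{\rm fs}$, the paper does not attempt to verify Kisin's condition (iii) by hand: it observes that $\tilde Y(\bar\rho)$, equipped with its set of crystalline points, is a refined family in the sense of Definition~\ref{defnrefinedfamily}, and then invokes \cite[Theorem 3.3.3]{BellaicheChenevier} to conclude $\tilde Y(\bar\rho)_{\rm fs}=\tilde Y(\bar\rho)$. Your plan of ``reducing to the pointwise statement on a trianguline representation with $(\delta_1,\delta_2)\in\Tcal_2^{\rm reg}$'' skips the hard step: condition (iii) is a statement about $A$-linear families of $\mathcal{G}_p$-invariant periods, and passing from a Zariski-dense set of points where a period is (uniquely) crystalline to the assertion that the family-level period lands in $(B_{\rm cris}\hat\otimes_{\Q_p}A)^{\phi=f^\ast Y}$ with the prescribed Frobenius action is precisely the nontrivial interpolation argument that \cite[Theorem 3.3.3]{BellaicheChenevier} (building on Kisin's own $Y$-smallness machinery) supplies. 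Your pointwise dichotomy (``splitting'' versus ``second crystalline subobject'') proves only uniqueness of the period at good points; it does not by itself control the behaviour of an invariant $A$-linear period at critical or degenerate points in the family. That gap should be closed, most economically by citing the same theorem.

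For the reverse inclusion the paper is more direct: it quotes Kisin's description of the rigid points of $X_{\rm fs}$ \cite[Prop.\ 10.4]{KisinsXfs} (density of points with Hodge--Tate weights $0,k$, $k$ not a non-negative integer) and Colmez's result \cite[Prop.\ 4.3]{Colmez} that existence of a crystalline period is equivalent to being trianguline, and concludes on the level of points. Your alternative --- equipping an irreducible component $W$ of $X_{\rm fs}$ with a refined-family structure and applying Corollary~\ref{universalrefined} --- is workable in principle, but the ``standard'' accumulation of non-critically refined crystalline points on Kisin's space is exactly the content of \cite[Prop.\ 10.4]{KisinsXfs}, so you are not really saving anything; and you then need to untangle the chain of maps through $\Xfrak_{\bar\tau_p}\times\Tcal^d$, the fiber product $X(\bar\rho)$, the projection $f$, and the choice of Frobenius eigenvalue $Y$ versus $Y'$, where the paper simply checks the inclusion on rigid points. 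In short: the skeleton of your argument is sound, but the two places you identify as ``hard'' or ``standard'' are precisely where the paper's citations do the real work, and as written your sketch does not replace them.
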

\begin{proof}
First it is clear that \[\big(\Xfrak_{\bar\rho}^{\rm vers}\times \Gbb_m\big)_{\rm fs}\subset Y(\bar\rho).\]
This may be checked on the level of points and follows from the description of the points of Kisin's finite slope space given in \cite[Proposition 10.4]{KisinsXfs}:
The points with generalized Hodge-Tate weights $0$ and $k$, where $k$ is not a non-negative integer, are dense, and it follows from \cite[Proposition 4.3]{Colmez} that the existence of a crystalline period is equivalent to the condition of being trianguline (the regularity condition on the weights of the triangulations is automatic as we demand that the Hodge-Tate weights are distinct). 

Further we check that $\tilde Y(\bar\rho)_{\rm fs}=\tilde Y(\bar\rho)$.  
As in the section above we may consider the maximal subset of crystalline points $Z\subset \tilde Y(\bar\rho)$ which makes $\tilde Y(\bar\rho)$ into a refined family. It then follows from \cite[Theorem 3.3.3]{BellaicheChenevier} that we have $\tilde Y(\bar\rho)_{\rm fs}=\tilde Y(\bar\rho)$. These two statements imply the claim:

Conditions (i) and (ii) are obviously satisfied by construction of $\tilde Y(\bar\rho)$. Let $f:\Spa(A,A^+)\rightarrow \Xfrak_{\bar\rho}^{\rm vers}\times\Gbb_m$ be a map as in (iii), and we further assume that the crystalline points are dense in $\Spa(A,A^+)$. 

If every de Rham period $h_A$ is crystalline, then $f$ factors over Kisin's finite slope space and our first claim shows that $f$ factors over $\tilde Y(\bar\rho)$. 

If on the other hand $f$ factors over $\tilde Y(\bar\rho)$, then the second claim shows that every de Rham period $h_A$ is crystalline (with the desired action of the crystalline Frobenius). 
\end{proof}

\section{The global construction}

Fix a finite set $S$ of primes and let $E$ be a finite extension of $\Q$ such that $p$ completely splits in $E$. Let $E_S\subset \bar\Q$ be the maximal subextension of $E$ in a fixed algebraic closure $\bar\Q$ which is unramified outside the places of $E$ lying over the primes in $S$. Further we write $\mathcal{G}_{E,S}={\rm Gal}(E_S,E)$ for the corresponding Galois-group. 
For the rest of this section we fix a continuous pseudo-character
\[\bar\tau:\mathcal{G}_{E,S}\longrightarrow \Fbb\]
and write $\bar\tau_p$ for the restriction of $\bar\tau$ to $\mathcal{G}_p$ induced by choosing a prime of $E$ above $p$. Again we write $R_{\bar\tau}$ for the universal deformation ring of $\bar\tau$ and $\Xfrak_{\bar\tau}$ for its generic fiber. Consider the map 
\begin{equation}\label{localization}\Xfrak_{\bar\tau}\longrightarrow \Xfrak_{\bar\tau_p}\end{equation}
given by restricting the universal deformation of $\bar\tau$ to $\mathcal{G}_p$. 
\begin{defn}
The \emph{global finite slope space} $X(\bar\tau)$ is the pullback of the local finite slope space $X(\bar\tau_p)$ along the map $(\ref{localization})$. We write $T_{\bar\tau}$ for the universal pseudo-character on $X(\bar\tau)$.
\end{defn}
Recall that a rigid analytic space (or an adic space locally of finite type over $\Q_p$) is called \emph{quasi-Stein} if there is a covering $X=\bigcup_{i\geq 0} U_i$ by open affinoids with $U_i\subset U_{i+1}$ such that $\Gamma(U_{i+1},\Ocal_X)\rightarrow \Gamma(U_{i},\Ocal_X)$ has dense image. Further $X$ is called \emph{nested} if there is a covering $X=\bigcup_{i\geq 0} V_i$ by open affinoids with $V_i\subset V_{i+1}$ such that $\Gamma(V_{i+1},\Ocal_X)\rightarrow \Gamma(V_i,\Ocal_X)$ is compact. 
\begin{prop}
\noindent {\rm (i)} The finite slope space $X(\bar\tau)$ is a nested quasi-Stein space. \\
\noindent {\rm (ii)} The map $X(\bar\tau)\rightarrow \Wcal^d$ is quasi-Stein, i.e. the preimage of an affinoid open is a quasi-Stein space. \\
\noindent {\rm (iii)} If $Z$ is a connected component of the local finite slope space $X(\bar\tau_p)$, then the pre-image of $Z$ under $X(\bar\tau)\rightarrow X(\bar\tau_p)$ has only finitely many irreducible components. 
\end{prop}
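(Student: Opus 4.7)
Parts (i) and (ii) are formal and rest on the standard fact that nested quasi-Stein is preserved under finite products and Zariski-closed subspaces. The generic fibre of the formal scheme $\Spf R_{\bar\tau}$ associated with the complete local Noetherian ring $R_{\bar\tau}$ admits the exhausting admissible covering by the affinoids $\{x\in\Xfrak_{\bar\tau}\mid |\mfrak|^n(x)\leq |p|\}$ (where $\mfrak$ is the maximal ideal), which is nested quasi-Stein. The weight space $\Wcal$ is the disjoint union of $p-1$ copies of the open unit disc and $\Gbb_m$ is exhausted by the annuli $\{p^n\leq|T|\leq p^{-n}\}$, so $\Wcal$ and $\Gbb_m$ are nested quasi-Stein. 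Hence $\Xfrak_{\bar\tau}\times\Tcal^d=\Xfrak_{\bar\tau}\times\Wcal^d\times\Gbb_m^d$ is nested quasi-Stein, and so is its Zariski-closed subspace $X(\bar\tau)$, which is (i). For (ii), given an affinoid open $V\subset\Wcal^d$, its preimage in $\Xfrak_{\bar\tau}\times\Tcal^d$ is the (still nested) quasi-Stein product $\Xfrak_{\bar\tau}\times V\times\Gbb_m^d$, and $X(\bar\tau)$ meets it in a Zariski-closed subspace, which is therefore quasi-Stein.

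For (iii), I identify $\pi^{-1}(Z)=Z\times_{\Xfrak_{\bar\tau_p}}\Xfrak_{\bar\tau}$. Since $R_{\bar\tau}[1/p]$ is Noetherian, it has only finitely many minimal primes, so $\Xfrak_{\bar\tau}$ decomposes into finitely many irreducible components $\Xfrak_{\bar\tau,1},\dots,\Xfrak_{\bar\tau,m}$; every irreducible component of $\pi^{-1}(Z)$ projects into one of them and is therefore contained in the corresponding base change $Z\times_{\Xfrak_{\bar\tau_p}}\Xfrak_{\bar\tau,j}$. Hence we may replace $R_{\bar\tau}$ by the quotient by a minimal prime and assume $\Xfrak_{\bar\tau}$ is itself irreducible. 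Pull back a nested quasi-Stein exhaustion $U_n$ of $\Xfrak_{\bar\tau_p}$ by affinoids along $Z\to\Xfrak_{\bar\tau_p}$ and then along $\Xfrak_{\bar\tau}\to\Xfrak_{\bar\tau_p}$ to get a nested quasi-Stein exhaustion $V_n$ of $\pi^{-1}(Z)$ by affinoids whose underlying rings are Noetherian affinoid algebras. Each $V_n$ therefore has finitely many irreducible components, and irreducible components of $\pi^{-1}(Z)$ correspond bijectively to compatible systems of components of the $V_n$.

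The main obstacle, and the heart of the argument, is to see that only finitely many such compatible systems exist. The plan is to split the components of $\pi^{-1}(Z)$ into those dominating $Z$ and those that do not. For a dominant component $W$, its generic fibre over $Z$ corresponds to a minimal prime of the fibre of the Noetherian map $R_{\bar\tau_p}\to R_{\bar\tau}$ at the generic point of the image of $Z$, and there are only finitely many such minimal primes. A non-dominant component is contained in $Z'\times_{\Xfrak_{\bar\tau_p}}\Xfrak_{\bar\tau}$ for some Zariski-closed $Z'\subsetneq Z$; organizing these by a Noetherian induction inside each $V_n$ and passing to the compatibility with the nested exhaustion should reduce to the dominant case for smaller, but still irreducible, subspaces of $Z$. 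The finiteness of dominant components in each step combined with the Noetherian induction then yields global finiteness. The subtle point — where I expect the real work to lie — is controlling the non-dominant components globally across the nested exhaustion, which forces one to exploit both the irreducibility of $Z$ and the Noetherianity of the affinoid algebras defining $R_{\bar\tau}$.
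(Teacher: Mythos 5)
Parts (i) and (ii) of your argument are correct and are essentially the paper's proof (which is just the one clause ``a direct consequence of the construction''): $\Xfrak_{\bar\tau}$, $\Wcal$, and $\Gbb_m$ are nested quasi-Stein, nested quasi-Stein is stable under finite products and under passage to Zariski-closed subspaces, and $X(\bar\tau)$ is Zariski-closed in $\Xfrak_{\bar\tau}\times\Wcal^d\times\Gbb_m^d$; (ii) then follows by the same closed-subspace argument applied to $\Xfrak_{\bar\tau}\times V\times\Gbb_m^d$.

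For (iii) your proof has a genuine gap, and you say so yourself. The paper's one-line proof takes a different and much more direct route: since $R_{\bar\tau}$ is Noetherian and is topologically generated over $R_{\bar\tau_p}$ by finitely many elements, one has a presentation $R_{\bar\tau}\cong R_{\bar\tau_p}[[x_1,\dots,x_n]]/(f_1,\dots,f_m)$ with a finitely generated kernel; hence $\Xfrak_{\bar\tau}$ is cut out by the finitely many \emph{integral} (i.e.\ power-bounded) functions $f_1,\dots,f_m$ inside $\Xfrak_{\bar\tau_p}\times\Ubb^n$, and $\pi^{-1}(Z)$ is cut out by the same finitely many integral functions inside the irreducible space $Z\times\Ubb^n$. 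It is this integrality/boundedness of the defining equations (a Weierstrass-type phenomenon) that is doing the real work; your reduction uses only the bare Noetherianity of $R_{\bar\tau}$ and never exploits the $R_{\bar\tau_p}$-algebra structure, which is exactly why you get stuck. Concretely, two steps of your plan fail as written. First, pulling back an affinoid $U_n\subset\Xfrak_{\bar\tau_p}$ along $\pi^{-1}(Z)\to\Xfrak_{\bar\tau_p}$ does \emph{not} produce an affinoid: the fibre direction contributed by $\Tcal^d$ and by $\Xfrak_{\bar\tau}$ over $\Xfrak_{\bar\tau_p}$ is unbounded, so your $V_n$ are only quasi-Stein, not affinoid, and the ``finitely many components of each $V_n$'' assertion collapses. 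Second, the statement that dominant components of $\pi^{-1}(Z)\to Z$ ``correspond to minimal primes of the fibre of $R_{\bar\tau_p}\to R_{\bar\tau}$ at the generic point of the image of $Z$'' has no literal meaning in rigid/adic geometry (there is no scheme-theoretic generic point of $Z$, and no associated Noetherian fibre ring over which to count minimal primes); and even if one fixes this, the proposed Noetherian induction over the exhaustion to handle the non-dominant components is not carried out, which you flag as ``the real work.'' So for (iii) you have a plan, not a proof.
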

\begin{proof}
The first point is a direct consequence of the construction and the second point follows from this. Finally (iii) follows from the fact the the kernel of $R_{\bar\tau_p}\rightarrow R_{\bar\tau}$ is generated by finitely many elements. 
\end{proof}
\begin{lem}\label{triangulineGpfamily} Let $U\subset X(\bar\tau)$ denote the Zariski-open subspace where the restriction $T_{\bar\tau}|_{\mathcal{G}_p}$ is absolutely irreducible. 
Then the projection $U\rightarrow X(\bar\tau_p)$ factors over $\Scal(\bar\tau_p)$.
\end{lem}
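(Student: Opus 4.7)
The plan is to combine Theorem~\ref{properness} with the definition of $\Scal(\bar\tau_p)$ as a normalization, exploiting that absolute irreducibility of the pseudo-character eliminates the residue field obstruction noted in the remark after the first theorem of the introduction.

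Let $U^{\rm reg} = U\cap X(\bar\tau)^{\rm reg}$ denote the preimage in $U$ of $X(\bar\tau_p)^{\rm reg}\subset X(\bar\tau_p)$; this is Zariski-open and dense in $U$, since $X(\bar\tau_p)^{\rm reg}$ is Zariski-dense in $X(\bar\tau_p)$ by the latter's construction as the Zariski closure. First I would produce a canonical morphism $U^{\rm reg}\to \Scal(\bar\tau_p)$ lifting the projection $U^{\rm reg}\to X(\bar\tau_p)^{\rm reg}$. By Theorem~\ref{properness}, $\pi_{\bar\tau_p}:\Scal^{\rm ns}(\bar\tau_p)\to X(\bar\tau_p)^{\rm reg}$ is finite and injective on rigid analytic points, and the only obstruction to it being an isomorphism is the possible non-triviality of the field extensions $k(x)/k(\pi_{\bar\tau_p}(x))$, arising from the distinction between an actual family of Galois representations on the source and a family of pseudo-characters on the target. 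Over the absolutely irreducible locus this obstruction collapses: an absolutely irreducible pseudo-character uniquely determines its semisimple representation (by the proposition in \S2.1), and the regularity of parameters uniquely pins down the triangulation, exactly as in the proof of injectivity of $\pi_{\bar\tau_p}$. Hence the restriction of $\Scal(\bar\tau_p)\to X(\bar\tau_p)$ over the absolutely irreducible part of $X(\bar\tau_p)^{\rm reg}$ is an isomorphism, giving the desired lift on $U^{\rm reg}$.

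The second step is to extend this lift from $U^{\rm reg}$ to $U$. Since $\Scal(\bar\tau_p)\to X(\bar\tau_p)$ is finite, the base change $V := U\times_{X(\bar\tau_p)}\Scal(\bar\tau_p)\to U$ is finite and, in particular, proper. The lift above provides a section of $V\to U$ over the dense open $U^{\rm reg}$; I would take the Zariski closure $W\subset V$ of its graph, so that $W\to U$ is finite and surjective by properness. To see that it is an isomorphism (giving a section over all of $U$) one argues as follows: at every rigid analytic point $u\in U$ the pseudo-character $T_{\bar\tau}|_{\mathcal{G}_p}$ is absolutely irreducible, hence uniquely determines the associated semisimple representation over $k(u)$. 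A second point in $W_u\subset V_u$ would have to correspond to a second triangulation of this representation with the prescribed parameters, and the Zariski closure coming from $U^{\rm reg}$ selects precisely the one compatible with the canonical triangulation on the regular locus, so $W_u$ is a single point with trivial residue field extension. Together with the normality of $\Scal(\bar\tau_p)$ this forces $W\to U$ to be an isomorphism, yielding the required section.

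The main obstacle is controlling the behavior at points of $U\setminus U^{\rm reg}$, where the parameters $(\delta_1,\dots,\delta_d)$ lie outside $\Tcal_d^{\rm reg}$ and the triangulation of the associated $(\phi,\Gamma)$-module need not be unique. The decisive point is that we work with the normalization $\Scal(\bar\tau_p)$ rather than with $\Scal^{\rm ns}(\bar\tau_p)$, combined with the finiteness of $\Scal(\bar\tau_p)\to X(\bar\tau_p)$ and the fact that absolute irreducibility of $T_{\bar\tau}|_{\mathcal{G}_p}$ is an open condition propagating from $U^{\rm reg}$ to all of $U$, so the Zariski-closure argument for the section picks up exactly one sheet over the non-regular locus.
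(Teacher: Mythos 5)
The first step of your proposal contains a genuine gap. You claim that because $T_{\bar\tau_p}$ is absolutely irreducible, ``the obstruction collapses'' and $\Scal(\bar\tau_p)\rightarrow X(\bar\tau_p)$ becomes an isomorphism over the absolutely irreducible part of the regular locus, citing the proposition in \S 2.1. But that proposition establishes the bijection $\rho\mapsto\mathrm{tr}\,\rho$ only over an \emph{algebraically closed} field. For a point $x\in X(\bar\tau_p)$ with residue field $k(x)$ a genuine finite extension of $\Q_p$, an absolutely irreducible $k(x)$-valued pseudo-character determines a unique semisimple representation over $\overline{k(x)}$, but that representation need not descend to $k(x)$ --- the obstruction is a Brauer class in $\mathrm{Br}(k(x))$, which is nontrivial for local fields. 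This is exactly the field-extension phenomenon singled out in the remark following the first theorem of the introduction (``the field extension $k(x)$ over $k(\pi_{\bar\tau}(x))$ is in general not the identity''). Absolute irreducibility alone does not eliminate it, so the map may still have fibers $\Spec k'$ with $k'/k(x)$ nontrivial, and the section you want over $U^{\rm reg}$ does not exist for purely local reasons. The subsequent closure/properness extension from $U^{\rm reg}$ to $U$ cannot rescue a nonexistent starting section.

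The paper's proof circumvents this by an essentially global argument that your approach never touches. After observing that $\pi_{\bar\tau_p}$ restricted to the absolutely irreducible locus $U_p$ is \'etale (deforming a representation and deforming its trace are equivalent when the representation is absolutely irreducible), it base-changes the \'etale cover along $U\rightarrow U_p$ and then shows this finite \'etale cover of $U$ splits. The splitting at a rigid point $x$ uses that the \emph{global} pseudo-character $T_{\bar\tau}\otimes k(x):\mathcal{G}_{E,S}\rightarrow k(x)$ is the trace of a representation actually defined over $k(x)$, and this is proved by appealing to the Chebotarev density theorem: the Frobenius elements at unramified places generate a dense subgroup of $\mathcal{G}_{E,S}$. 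Restricting this $k(x)$-rational representation to $\mathcal{G}_p$ then produces a $k(x)$-point of the fiber and hence a section of the cover. Since your argument works entirely with $T_{\bar\tau}|_{\mathcal{G}_p}$ and $X(\bar\tau_p)$, it has no access to this global input, and this is precisely what is missing.
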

\begin{proof}
Let $U_p\subset X(\bar\tau_p)$ denote the open subspace such that the pseudo-character $T_{\bar\tau_p}$ is absolutely irreducible when restricted to $U_p$. If $U_{\Scal}$ denotes its pre-image under $\pi_{\bar\tau_p}$, then the induced map
\[f=\pi_{\bar\tau_p}|_{U_\Scal}:U_{\Scal}\longrightarrow U_p\]
is \'etale, as for absolutely irreducible representations it is equivalent to deform the representation itself, or its trace. 

Let $x\in X(\bar\tau)$ with corresponding representation $\rho_x:\mathcal{G}_{E,S}\rightarrow \GL_d(\bar\Q_p)$. This representation has actually coefficients in $k(x)$, as the images of the Frobenius elements at the unramified places have entries in $k(x)$ and as they generate a dense subgroup of $\mathcal{G}_{E,S}$ by the Chebotarev-density theorem. It follows that the \'etale cover $f'$ in the cartesian diagram
\[\begin{xy}
\xymatrix{
\Scal^{\rm ns}(\bar\tau) \ar[r]\ar[d]_{f'} & U_{\Scal}\ar[d]^f\\
U\ar[r] & U_p
}
\end{xy}\]
splits into a disjoint union of $U$. Especially $f'$ admits a section which implies the claim.  
\end{proof}
\begin{cor}\label{familyofGalrep}
 Assume that $\bar\tau$ is the trace of an absolutely irreducible representation $\bar\rho$.\\
 \noindent {\rm (i)}  There is a family $\Vcal_{\bar\tau}$ of $\mathcal{G}_{E,S}$-representations on $X(\bar\rho)$ with trace $T_{\bar\tau}$.\\
 \noindent {\rm (ii)} Let $Z\subset X(\bar\tau)$ be an irreducible component such that $T_{\bar\tau}|_{\mathcal{G}_p}$ is absolutely irreducible at one point of $Z$ and such that at least one point of $Z$ maps to the regular locus $X^{\rm reg}(\bar\tau_p)\subset X(\bar\tau_p)$. Then the family $\Vcal_{\bar\tau}|_{\mathcal{G}_p}$ is trianguline at all points of $Z$.\\
 \noindent {\rm (iii)} The family $\Vcal_{\bar\tau}|_{\mathcal{G}_p}$ is a trianguline family over an Zariski-open subset of $Z$. 
\end{cor}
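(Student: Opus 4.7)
The plan is to handle (i), (iii) and (ii) in that order, with (ii) as the real obstacle.

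For (i) the hypothesis that $\bar\rho$ is absolutely irreducible is decisive: by Proposition~2.(iv) of the preliminaries, $R_{\bar\tau}=R_{{\rm tr}\,\bar\rho}$ actually pro-represents the deformation functor of the representation $\bar\rho$ itself, not just that of the pseudo-character. Consequently $\Xfrak_{\bar\tau}$ carries a universal family of $\mathcal{G}_{E,S}$-representations with trace $\tau$, and pulling this back along the structure map $X(\bar\rho)\rightarrow \Xfrak_{\bar\tau}$ produces the desired family $\Vcal_{\bar\tau}$, whose trace agrees with $T_{\bar\tau}$ by functoriality.

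For (iii), let $f\colon X(\bar\tau)\rightarrow X(\bar\tau_p)$ denote the natural map and let $U\subset X(\bar\tau)$ be the Zariski-open absolutely irreducible locus appearing in Lemma~\ref{triangulineGpfamily}. I set
\[ Z_0 = Z\cap U\cap f^{-1}\bigl(X^{\rm reg}(\bar\tau_p)\bigr). \]
Both $U\cap Z$ and $f^{-1}(X^{\rm reg}(\bar\tau_p))\cap Z$ are nonempty Zariski-open subsets of the irreducible space $Z$, hence dense, so $Z_0$ is Zariski-open and dense in $Z$. Lemma~\ref{triangulineGpfamily} furnishes a factorization $U\rightarrow \Scal(\bar\tau_p)\rightarrow X(\bar\tau_p)$; over $X^{\rm reg}(\bar\tau_p)$ the normalization $\Scal(\bar\tau_p)$ agrees with $\Scal^{\rm ns}(\bar\tau_p)$ because $\pi_{\bar\tau_p}$ is finite and injective there. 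Hence $Z_0$ acquires a canonical map to $\Scal^{\rm ns}(\bar\tau_p)$, and pulling back $\Dcal_d^{\rm ns}$ equips ${\bf D}^\dagger_{\rig}(\Vcal_{\bar\tau}|_{\mathcal{G}_p})|_{Z_0}$ with a trianguline filtration, proving (iii).

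Part (ii) is harder and is the main obstacle, since it requires pointwise triangulinity at every $z\in Z$, including points where the characters $\delta_i$ collide or where the $\mathcal{G}_p$-representation becomes reducible. The plan is to fix $z\in Z$, choose a reduced affinoid curve $C\subset Z$ through $z$ meeting $Z_0$, and extend the triangulation from $C\cap Z_0$ to $z$ using a spreading-out theorem for triangulations in one-parameter families. The delicate point is the behaviour near the boundary of $Z_0$: when $(\delta_1(z),\dots,\delta_d(z))$ leaves $\Tcal_d^{\rm reg}$, the filtration can at best be extended as a coherent saturated $\Rcal_C$-submodule rather than as a locally direct summand, so a separate argument is needed to show that $\rho_z|_{\mathcal{G}_p}$ is nevertheless trianguline in the pointwise sense. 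This step will exploit that $\rho_z$ arises as a limit of trianguline representations along $C\cap Z_0$, together with the closedness of the image of $\pi_{\bar\tau_p}$ from Theorem~\ref{properness} and the density of regular crystalline points already established.
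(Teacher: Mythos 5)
Parts (i) and (iii) are correct and match the paper's argument. For (i) you identify the right point: Proposition~2.(iv) lets you trade the pseudo-character for an honest representation, and $\Vcal_{\bar\tau}$ is its pullback to $X(\bar\rho)$. For (iii) you correctly pass through Lemma~\ref{triangulineGpfamily}; your explicit intersection with $f^{-1}(X^{\rm reg}(\bar\tau_p))$ is in fact slightly more careful than the paper's wording, since the universal trianguline family only lives on $\Scal^{\rm ns}(\bar\tau_p)$ and not on the whole normalization $\Scal(\bar\tau_p)$. This is a legitimate tidying-up.

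For (ii), however, you present a plan and explicitly say ``a separate argument is needed'' for the step that is the actual content of the statement, so as written there is a gap. The paper's argument, though terse, names the tool you are groping for: the \emph{properness of the flag variety}. Concretely, over $Z$ one has the family $\Ncal={\bf D}_{\rig}^\dagger(\Vcal_{\bar\tau}|_{\mathcal{G}_p})$, and the flag bundle of full filtrations of $\Ncal$ is proper over $Z$. The triangulation over the dense open $Z_0$ gives a section $Z_0\to \mathcal{F}l(\Ncal)$; properness of $\mathcal{F}l(\Ncal)\to Z$ (applied to a curve through $z$ meeting $Z_0$, or directly via the valuative criterion) produces a filtration on the fiber at $z$, whose graded pieces are rank-one $(\phi,\Gamma)$-modules. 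This is exactly the specialization-of-filtration step your proposal postpones. Your worry about the extension being ``a coherent saturated submodule rather than a direct summand'' is the right worry to have when asking whether the whole \emph{family} is trianguline at $z$ -- which is not claimed, and which the paper explicitly disclaims in a footnote -- but it does not block the pointwise conclusion: the flag-variety limit gives a flag of the fiber $\Ncal\otimes k(z)$ itself. So the missing idea is precisely to replace ``spreading out plus a separate argument'' with the properness of the flag bundle over $Z$, after which the curve you chose does the rest.
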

\begin{proof}
As $\bar\tau={\rm tr}\,\bar\rho$ for an absolutely irreducible representations $\bar\rho$, the universal deformation ring $R_{\bar\tau}$ of $\bar\tau$ also pro-represents the deformation functor of $\bar\rho$. Hence there is a canonical family $\Vcal_{\bar\tau}$ of $\mathcal{G}_{E,S}$-representations on $\Xfrak_{\bar\tau}$ and hence also on $X(\bar\tau)$.

Let $U$ be the Zariski-open subset of the preceding lemma. By assumption the intersection $U'=U\cap Z$ is non-empty and hence $U'$ is Zariski-dense in $Z$.
It follows from the above lemma and the fact that the pseudo-characters on $U'$ are absolutely irreducible that $\Vcal_{\bar\tau}|_{\mathcal{G}_p}$ is the trianguline family pulled back to $U'$. 
The last point follows from this and the second point follows from the properness of the flag variety: If $z\in Z$ maps to $X(\bar\tau_p)\backslash X^{\rm reg}(\bar\tau_p)$, then we can specialize a trianguline filtration from the regular locus to $z$ and obtain again a trianguline filtration\footnote{However, note that this does not automatically imply that the whole family $\Vcal_{\bar\tau}|_{\mathcal{G}_p}$ is trianguline.}. 
\end{proof}

\subsection{The action of the Hecke algebra}
To define the action of the Hecke-algebra we need to recall the set up of \cite{BellaicheChenevier}.
Fix a set $S_0$ of primes in $\Q$ and write $\Hcal_{\rm ur}$ for the Hecke-algebra
\[\Hcal_{\rm ur}=\Ccal_c^\infty(\GL_d(\hat\Z_{S_0})\backslash\GL_d(\Abb_{S_0})/\GL_d(\hat\Z_{S_0}), \Z).\]
Here, and in the following, $\Ccal_c^\infty(-,M)$ denotes the set of locally constant functions with compact support and values in $M$. This Hecke-algebra will take care of the information at the unramified places. The situation at $p$ is controlled by the so called \emph{Atkin-Lehner} ring which is defined as follows:

Let $T\subset B\subset \GL_d(\Q_p)$ be the diagonal torus resp. the Borel subgroup of upper triangular matrices and let $I\subset \GL_d(\Q_p)$ denote the standard Iwahori subgroup. Further write $T^\circ\subset T$ for the $\Z_p$-valued points of the diagonal torus and 
\begin{align*}
U&=\{{\rm diag}(p^{a_1},\dots,p^{a_d})\in T\mid a_i\in \Z\}\\
U^-&=\{{\rm diag}(p^{a_1},\dots,p^{a_d})\in U\mid a_i\geq a_{i+1}\}.
\end{align*}
Then the Atkin-Lehner ring is the subring 
\[\mathcal{A}_p\subset \Ccal_c^\infty\big(I\backslash\GL_d(\Q_p)/I,\Z\big[\tfrac{1}{p}\big]\big)\] generated by characteristic functions of the subsets $IuI$ with $u\in U^-$ and their inverses, compare \cite[6.4]{BellaicheChenevier} for these definitions. By \cite[Proposition 6.4.1]{BellaicheChenevier} we have $\mathcal{A}_p\cong \Z[U]$.

Finally, the Hecke-algebra we are going to consider is \[\Hcal=\Hcal_{\rm ur}\otimes_\Z \mathcal{A}_p\subset \Ccal_c^\infty(\GL_d(\Abb_{S_0,p}),\Q).\]

Assume that $S\cap S_0=\emptyset$. Then there is an obvious ring-homomorhpism $\psi_{\rm ur}^{\rm gal}:\Hcal_{\rm ur}\rightarrow R_{\bar\tau}$: Let $\ell\in S_0$ and let 
\[P(T_1,\dots, T_d)\in \Z[T_1,\dots,T_d]^{S_d}\]
be an element of the Hecke-algebra at the prime $\ell$. Then $P$ is mapped to its evaluation on the eigenvalues of the Frobenius ${\rm Frob}_\ell$ at $\ell$.

Further we can construct an action of the Atkin-Lehner ring on $\Tcal^d$ as follows. Let 
\[h_i={\rm diag}(1,\dots,1,p,1,\dots,1)\in U,\] with $p$ at the $i$-th entry. There is a ring homomorphism \[\psi_p^{\rm gal}:\mathcal{A}_p\longrightarrow \Gamma(\Tcal^d,\Ocal_{\Tcal^d})\] such that $\psi_p^{\rm gal}(h_i)=\delta_i(p)$.  The above defines a ring homomorphism
\[\psi^{\rm gal}=\psi^{\rm gal}_{\rm ur}\otimes \psi^{\rm gal}_p:\Hcal=\Hcal_{\rm ur}\otimes_\Z\mathcal{A}_p\longrightarrow \Gamma(X(\bar\tau),\Ocal_{X(\bar\tau)}).\]
\begin{prop}
Assume that the set $S_0$ has Dirichlet-density $1$. \\
\noindent {\rm (i)} The homomorphism $\psi^{\rm gal}$ has dense image and \[\psi_{\rm ur}^{\rm gal}(\Hcal_{\rm ur})\subset \Gamma(X(\bar\tau),\Ocal_{X(\bar\tau)}^+).\] 
\noindent {\rm (ii)} Let $u_0,\dots,u_m\in\mathcal{A}_p^\times$. Then 
\[\nu_m=(\omega, \psi(u_0)^{-1},\dots,\psi(u_m)^{-1}):X(\bar\tau)\longrightarrow \Wcal^d\times\Gbb_m^m\]
induces surjections
\[\psi^{\rm gal}\otimes\nu_m^\sharp: \Hcal\otimes_{\Z}\Gamma(V,\Ocal_{\Wcal^d\times\Gbb_m^m})\longrightarrow \Gamma(\nu_m^{-1}(V),\Ocal_{X(\bar\tau)})\]
for all affinoid opens $V\subset \Wcal^d\times\Gbb_m^m$ if and only if $\nu_m$ is finite. 
\end{prop}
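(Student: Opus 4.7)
The plan is to treat (i) and (ii) separately, with (i) supplying the density input used throughout (ii).

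For (i), the inclusion $\psi_{\rm ur}^{\rm gal}(\Hcal_{\rm ur}) \subset \Gamma(X(\bar\tau), \Ocal_{X(\bar\tau)}^+)$ is immediate from the observation around $(\ref{reductionmodp})$: a continuous pseudo-character on the compact group $\mathcal{G}_{E,S}$ factors through the bounded subring $\Ocal^+$, and each element of $\Hcal_{\rm ur}$ pulls back to a polynomial in Frobenius traces $T_{\bar\tau}({\rm Frob}_\ell)$. For the density, the main input is the Chebotarev density theorem: under the hypothesis that $S_0$ has Dirichlet density one, the set $\{{\rm Frob}_\ell : \ell \in S_0\}$ is topologically dense in $\mathcal{G}_{E,S}$, so by continuity of $T_{\bar\tau}$ the closure of $\psi_{\rm ur}^{\rm gal}(\Hcal_{\rm ur})$ contains the full image $T_{\bar\tau}(\mathcal{G}_{E,S})$. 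By universality of the deformation ring this closure is already dense in the pullback of $\Gamma(\Xfrak_{\bar\tau}, \Ocal)$. Combining with the Atkin--Lehner contribution $\delta_i(p)$ on $\Gbb_m^d \subset \Tcal^d$, and with Lemma $\ref{Senpoly}$ identifying the elementary symmetric functions of the weights $\omega_i$ with functions of $T_{\bar\tau}$, one recovers density in $\Gamma(X(\bar\tau), \Ocal)$.

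For (ii) I argue both directions. First assume $\nu_m$ is finite and fix an affinoid open $V \subset \Wcal^d \times \Gbb_m^m$; set $A = \Gamma(\nu_m^{-1}(V), \Ocal)$ and $B = \Gamma(V, \Ocal)$, so that $A$ is a Banach module of finite type over $B$. By (i) the image of $\psi^{\rm gal}(\Hcal)$ is dense in $A$; taking $B$-module generators $f_1, \ldots, f_n$ of $A$ and approximating them sufficiently closely by $g_i \in \psi^{\rm gal}(\Hcal)$, a topological Nakayama argument (applied at each maximal ideal of $B$) shows that the $g_i$ also generate, producing the required surjection. Conversely, assume the surjection holds for every $V$. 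The map $\nu_m$ is partially proper, being the composition of the closed immersion $X(\bar\tau) \hookrightarrow \Xfrak_{\bar\tau} \times \Tcal^d$ with the partially proper projection to $\Wcal^d \times \Gbb_m^m$. The surjection presents $A$ as a quotient of $\Hcal \otimes_\Z B$, which one then shows factors through a finitely generated $B$-subalgebra by a Noetherian reduction at a dense set of rigid points; this gives quasi-finiteness, which combined with partial-properness yields finiteness.

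The principal difficulty lies in the density statement of (i): the Hecke algebra produces only $\delta_i(p)$, not the full characters $\delta_i|_{\Z_p^\times}$ or their derivatives $\omega_i$. The argument via Lemma $\ref{Senpoly}$ realises the symmetric functions in the $\omega_i$ as functions of $T_{\bar\tau}$, and the individual $\omega_i$ can then be extracted from the closure via the refinement ordering provided by the generically distinct $\delta_i(p)$; making this precise in the Fr\'echet topology of the quasi-Stein space $X(\bar\tau)$ is the real technical heart. In (ii) the subtle direction is "surjection $\Rightarrow$ finite", where upgrading from a module-theoretic surjection to honest finite generation of $A$ over $B$ requires the Noetherian reduction alluded to above.
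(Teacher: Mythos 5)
Your outline for part (i) and for the ``finite $\Rightarrow$ surjective'' direction of (ii) matches the paper's (very terse) argument. In (i) you correctly flag the genuine subtlety that the Hecke algebra only produces the Galois traces and the $\delta_i(p)$, so that density in the $\Wcal^d$-direction (equivalently, recovering the individual $\omega_i$'s rather than just their symmetric functions via the Sen polynomial) requires a separate argument; the paper's proof is a single sentence citing Chebotarev and does not address this either, so your observation is a fair criticism of the exposition even though your own resolution (via the ``refinement ordering provided by the generically distinct $\delta_i(p)$'') is left at the level of a plan. The forward direction of (ii), namely that finiteness of $\nu_m$ together with the density from (i) gives surjectivity because $\Gamma(\nu_m^{-1}(V),\Ocal)$ is then a finitely generated $\Gamma(V,\Ocal)$-module, is the same as the paper's.

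The real divergence, and the place where your argument has a gap, is the converse direction of (ii). The paper's argument is a topological one: the surjection exhibits $A=\Gamma(\nu_m^{-1}(V),\Ocal_{X(\bar\tau)})$ as a $B$-module generated by a \emph{countable} set of elements (the images of a $\Z$-basis of $\Hcal$), where $B=\Gamma(V,\Ocal_{\Wcal^d\times \Gbb_m^m})$ is a Noetherian Banach algebra and $A$ is a Fr\'echet $B$-module (because $\nu_m^{-1}(V)$ is quasi-Stein). One then writes $A=\bigcup_N M_N$ with $M_N$ the $B$-submodule spanned by the first $N$ generators; each $M_N$ is closed in $A$ (a finitely generated submodule of a Fr\'echet module over a Noetherian Banach algebra is closed), and the Baire category theorem forces $A=M_N$ for some $N$. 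Finite generation of $A$ over $B$ then upgrades, via the quasi-Stein structure, to $\nu_m^{-1}(V)$ being affinoid and finite over $V$. Your proposed route -- presenting $A$ as a quotient of $\Hcal\otimes_\Z B$, passing to a finitely generated $B$-subalgebra by ``Noetherian reduction at a dense set of rigid points,'' concluding quasi-finiteness and then invoking partial properness -- has two problems. First, the ring $A$ of a quasi-Stein space need not be Noetherian, and a quotient of a countably generated $B$-algebra need not factor through a finitely generated $B$-subalgebra without precisely the kind of Baire/closedness argument the paper uses; ``Noetherian reduction at rigid points'' does not supply this. Second, even granting quasi-finiteness and partial properness, you still need quasi-compactness of $\nu_m$ to conclude properness and hence finiteness, and your sketch never addresses that. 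The Baire-category argument is both cleaner and closes these gaps simultaneously, since it yields finite generation of the $B$-module $A$ directly, from which finiteness of the map over $V$ follows.
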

\begin{proof}
\noindent (i)  This follows from the Chebotarev desity theorem and the assumption that $S_0$ has density 1.\\
\noindent (ii) If the map is finite, then 
$\Gamma(\nu_m^{-1}(V),\Ocal_{X(\bar\tau)})$
is generated by finitely many sections over $\Gamma(V,\Ocal_{\Wcal^d\times \Gbb_m^m})$. Hence density of the image implies surjectivity.\\
Assume conversely that the map is surjective. 
As $\Gamma(\nu_m^{-1}(V),\Ocal_{X(\bar\tau)})$ is generated by a countable set of generators over $\Gamma(V,\Ocal_{\Wcal^d\times\Gbb_m^m})$, it has to be generated by finitely many elements. 
\end{proof}

\subsection{Comparison with eigenvarieties}
To compare finite slope spaces with eigenvarieties we recall the aximoatic set up from \cite[7.2]{BellaicheChenevier}.
Let $E$ be an \'etale $\Q$-algebra of rank $2$ with non-trivial $\Q$-automorhpism $c$. 
Further let $\Delta$ be a central simple $E$-algebra of rank $d^2$ and let $x\mapsto x^\ast$ be a $\Q$-involution of the second kind on $\Delta$. 
Let $G$ be the reductive group over $\Q$ attached to $\Delta$, i.e.
\[G(A)=\{g\in (\Delta\otimes_k A)^\times \mid gg^\ast=1\}.\]
In the following we will assume that $p$ splits in $E$ and that $G$ splits at $p$, i.e.  $G\otimes_\Q \Q_p=\GL_{d,\Q_p}$. Further we assume that $G$ is compact at infinity. 
We recall the notion of a $p$-refined automorphic representation from \cite[7.2.2]{BellaicheChenevier}.
\begin{defn}
A \emph{$p$-refined automorphic representation of weight $\underline{k}\in\Z^d$ of $G$} is a pair $(\pi,\Rcal)$ consisting of an irreducible automorphic representation $\pi$ of $G(\Abb)$ such that
$\pi_\infty$ is induced by restriction of the representation of $\GL_d(\C)$ of highest weight $\underline{k}=(k_1,\dots,k_d)$ under $G(\R)\hookrightarrow \GL_d(\C)$. Further $\pi_p$ is unramified and $\Rcal$ is a refinement of $\pi_p$, that is, an ordering of the eigenvalues of the Satake parameter of $\pi_p$. In other words $\Rcal$ is given by a character $\chi:U\cong T/T^\circ\rightarrow \mathbb{C}^\times$. We further require that the refinement is \emph{accessible} which means that there is an embedding \[\pi_p\longrightarrow {\rm Ind}_B^{\GL_d(\Q_p)}\chi.\] 
\end{defn}
Let $S_0$ be a set of primes $\ell$ such that $\ell$ splits in $E$ and $G\otimes_\Q \Q_\ell\cong \GL_d(\Q_\ell)$. We further choose a model of $G$ over $\mathbb{Z}$ and assume that $G(\Z_\ell)$ is maximal compact for $\ell\in S_0$. In this case the data of a $p$-refined automorphic representation $(\pi,\Rcal)$ gives rise to a character $\psi=\psi_{(\pi,\Rcal)}$ of the Hecke algebra $\Hcal$ defined in the previous section: The restriction $\psi_{\rm ur}$ of $\psi$ to the unramified Hecke algebra is given by the Satake parameter of $\pi_{S_0}$ while the restriction $\psi_p$ of $\psi$ to $\mathcal{A}_p$ is defined by
\[\psi_p|_U=\chi\delta_B^{-1/2}\delta_{\underline{k}}. \]
Here $\delta_{\underline{k}}$ is the character $(z_i)\mapsto (z_i^{k_i})$ of the diagonal torus and $\delta_B$ is the modulus character. 
This defines a $\mathbb{C}$-valued character of $\Hcal$ which takes values in $\bar\Q$ by \cite[Lemma 6.2.7]{BellaicheChenevier} and hence may be viewed as a $\bar\Q_p$-valued character (after choosing embeddings $\bar\Q\hookrightarrow \mathbb{C}$ and $\bar\Q\hookrightarrow \bar\Q_p$).  
Let $\Zcal$ be a subset of the set 
\begin{equation}\label{refinedautomorphic}
\left\{ 
\begin{array}{*{20}c}
(\psi_{(\pi,\Rcal)},\underline{k})\in {\rm Hom}(\Hcal,\bar\Q_p)\times \Z^d \\
 (\pi,\Rcal)\ \text{is a $p$-refined automorphic representations of weight}\ \underline{k}
\end{array}
\right\}.
\end{equation}
Further let $u_0={\rm diag}(p^{d-1},\dots,p,1)\in U^-\subset \mathcal{A}_p^\times$. 
\begin{defn}\label{defneigenvar}
An \emph{eigenvariety} for $\Zcal$ is a reduced rigid analytic space $X$ equippend with a ring homomorphism $\psi:\Hcal\rightarrow \Gamma(X,\Ocal_X)$, a morhpism $\omega: X\rightarrow \Wcal^d$ and a Zariski-dense accumulation subset $Z\subset X(\bar\Q_p)$ such that
\begin{enumerate}
\item[(i)] The map $\nu=(\omega,\psi(u_0)^{-1}):X\rightarrow \Wcal^d\times \Gbb_m$ is finite.
\item[(ii)] For all open affinoid $V\subset \Wcal^d\times \Gbb_m$ the natural map
\[\psi\otimes\nu^\ast:\Hcal\otimes\Gamma(V,\Ocal_{\Wcal^d\times\Gbb_m}) \longrightarrow \Gamma(\nu^{-1}(V),\Ocal_X)\]
is surjective. 
\item[(iii)] There is a bijection $Z\rightarrow \Zcal$ induced by the evaluation of $\psi$ and $\omega$. 
\end{enumerate}
\end{defn}
If it exists, an eigenvariety is unique up to unique isomorphism, see \cite[Proposition 7.2.8]{BellaicheChenevier}. We have seen above that the global finite slope space has properties that are at least similar to the ones of eigenvarieties. To have a better comparison with eigenvarieties we need to specialize our set up. In the following we will assume that we are in one of the following cases.
\begin{enumerate}
\item[(a)]  $E=\Q\times\Q$ and $\Delta=\Delta_1\times\Delta_1^{\rm opp}$ for a definite quaternion algebra $\Delta_1$ over $\Q$. Especially $d=2$. 
\item[(b)]  $E$ an imaginary quadratic extension and $\Delta$ is associated to the hermitian form $q:(z_1,\dots, z_d)\mapsto \sum N(z_i)$, where $N:E\rightarrow \Q$ is the norm map. 
\item[(c)]  $E$ an imaginary quadratic extension and $G$ is a definite unitary group such that $G(\Q_\ell)$ is either quasi-split or isomorphic to the group of invertible elements of a central division algebra over $\Q_\ell$. The latter case occurring for at least one prime $q$.
\end{enumerate}
In the following we assume that we are in the situation of \cite[7.5]{BellaicheChenevier}. That is we assume that the set $S_0$ has Dirichlet-density $1$ and fix a compact open subgroup $K^p=\prod_{\ell\neq p}K_\ell\subset G(\Abb^p_f)$. Finally we choose a finite set $S$ of primes such that  $p\in S$ and for each $\ell \notin S$ or $\ell\in S_0$ the compact open subgroup $K_\ell$ is maximal hyperspecial or very special compact. Choose the decomposed idempotent \[e\in  \Ccal_c^\infty(G(\Abb^{p,S_0}),\bar\Q)\otimes 1_{\Hcal_{\rm ur}}\subset \Ccal_c^\infty(G(\Abb_f^p),\bar\Q)\] 
which satisfies $ee_{K^p}=e_{K^p}e$. In case (c) we further assume that $e_q$ vanishes on $1$-dimensional representations of the division algebra $G(\Q_q)$.

 Let $\Zcal_e$ be the set of all $p$-refined representations such that $e(\pi_p)\neq 0$ and let $\Zcal\subset \Zcal_e$ be the set of all $p$-refined representations which are \emph{non monodromic principal series} at the primes in a fixed subset $S_N\subset S$ (which is empty in case (c)). Let $(Y,\psi,\omega,Z)$ be the corresponding eigenvariety which exists by \cite[7.5]{BellaicheChenevier}.
 \begin{prop}
 With the above notations, there exists a continuous pseudo-character $T_Y: \mathcal{G}_{E,S}\rightarrow \Gamma(Y,\Ocal_Y)$ such that for all $z=(\pi,\Rcal)\in Z$ we have $T\otimes k(z)={\rm tr}\, \rho_{z}$, where $\rho_z$ is the Galois-representation\footnote{In case (b) we need to assume the folklore conjecture (see \cite[Conjecture 6.8.1]{BellaicheChenevier} for example) on the existence of this representation} associated with the automorphic representation $\pi$. 
 \end{prop}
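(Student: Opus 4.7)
The plan is to construct $T_Y$ in three steps: first define it on Frobenius elements at unramified primes using the Hecke action $\psi$, then verify the pseudo-character identities globally on $Y$ via Zariski density of the classical locus $Z$, and finally extend continuously from a dense subgroup to the full Galois group by Chebotarev density. For the first step, given a prime $v$ of $E$ above some rational prime $\ell \notin S$ that splits in $E$ and at which $G$ is unramified, the unramified Hecke algebra $\Hcal_{\rm ur}$ contains the elementary symmetric functions $T_v^{(i)}$ of the Satake parameters. Local-global compatibility at unramified places ensures that at every classical point $z = (\pi,\Rcal) \in Z$, the value $\psi(T_v^{(i)})(z)$ agrees with the $i$-th elementary symmetric function of the eigenvalues of $\rho_z(\mathrm{Frob}_v)$. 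In particular, the function $a_v := \psi(T_v^{(1)}) \in \Gamma(Y,\Ocal_Y)$ interpolates the traces $\operatorname{tr}\rho_z(\mathrm{Frob}_v)$ over $Z$.

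For the second step, I would let $H \subset \mathcal{G}_{E,S}$ be the subgroup generated by all such Frobenius elements (and their conjugates, using that $T_Y$ should be conjugation-invariant) and provisionally define $T_Y$ on $H$ by extending the $a_v$ multiplicatively/additively as dictated by the pseudo-character formalism. The key observation is that the identities $S_{d+1}(T_Y)(g_1,\dots,g_{d+1}) = 0$, for tuples of Frobenius elements in $H$, are polynomial relations among the functions $\psi(T_v^{(i)})$. At every $z \in Z$ these relations hold, because $\operatorname{tr}\rho_z$ is a pseudo-character of dimension $d$; since $Z$ is Zariski-dense in the reduced space $Y$, they hold globally. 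Thus $T_Y|_H$ is a pseudo-character of dimension $d$ with values in $\Gamma(Y,\Ocal_Y)$.

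For the third step, Chebotarev density implies that $H$ is dense in $\mathcal{G}_{E,S}$, so it suffices to show that $T_Y|_H$ is continuous when valued in $\Gamma(U,\Ocal_Y)$ for every affinoid open $U \subset Y$. Part (i) of the preceding proposition shows $\psi_{\rm ur}^{\rm gal}(\Hcal_{\rm ur}) \subset \Gamma(X(\bar\tau),\Ocal_{X(\bar\tau)}^+)$, which together with the pullback $Y \to X(\bar\tau)$ gives that the $a_v$ are bounded on $U$. A standard Fr\'echet-space / uniform continuity argument (as in the construction of the universal pseudo-character on an eigenvariety in \cite[Section 7.5]{BellaicheChenevier}) then yields a unique continuous extension $T_Y: \mathcal{G}_{E,S} \to \Gamma(U,\Ocal_Y)$, and the pseudo-character identities extend by continuity. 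Gluing over an affinoid cover of the quasi-Stein space $Y$ produces the desired global $T_Y$; the compatibility $T_Y \otimes k(z) = \operatorname{tr}\rho_z$ at $z \in Z$ holds at Frobenius elements by construction and on all of $\mathcal{G}_{E,S}$ by continuity and Chebotarev.

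The main obstacle is step three, specifically the boundedness needed to upgrade from pointwise values at $Z$ to a continuous map on the compact group $\mathcal{G}_{E,S}$; once the integrality of the unramified Hecke eigenvalues is available (as supplied by the previous proposition), the argument is essentially the same as the classical eigenvariety construction. In case (b), we must further invoke the folklore conjecture on the existence of $\rho_z$, as indicated in the footnote; in cases (a) and (c) these Galois representations are known to exist.
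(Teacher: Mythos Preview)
The paper's own proof is a one-line citation: ``This is one of the main results of \cite[7.5]{BellaicheChenevier}.'' Your proposal instead sketches the actual argument carried out there, and the outline (define on Frobenii via $\psi$, check pseudo-character relations by Zariski density of $Z$, extend by Chebotarev and continuity) is indeed the standard one.

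There is, however, a genuine circularity in your step three. You obtain boundedness of the $a_v$ by invoking the inclusion $\psi_{\rm ur}^{\rm gal}(\Hcal_{\rm ur})\subset \Gamma(X(\bar\tau),\Ocal_{X(\bar\tau)}^+)$ from the preceding proposition \emph{together with the pullback along $Y\to X(\bar\tau)$}. But that map is only constructed in the Theorem immediately following the present proposition, and its construction relies on the refined family structure on $Y$, which in turn requires the pseudo-character $T_Y$ you are trying to build (via Corollary~\ref{universalrefined}). So you cannot appeal to $Y\to X(\bar\tau)$ here. The boundedness input must come from the eigenvariety side alone: in \cite[7.5]{BellaicheChenevier} the Hecke operators act on a $p$-adic Banach module of automorphic forms with an integral structure, so the spectral eigenvalues $\psi(T_v^{(i)})$ are power-bounded on each affinoid of $Y$ by construction. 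Once you substitute that intrinsic argument for your appeal to $X(\bar\tau)$, your sketch is correct and amounts to unpacking the cited reference.
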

 \begin{proof}
 This is one of the main results of \cite[7.5]{BellaicheChenevier}.
 \end{proof}
 As a consequence of the above proposition we introduce the following notation: Let $\bar\tau:\mathcal{G}_{E,S}\rightarrow \Fbb$ be a pseudo-character. Then $Y(\bar\rho)\subset Y$ is the open and closed subset of the eigenvariety $Y$, where the reduction modulo $\Ocal_Y^{++}$ of the pseudo-character $T_Y$ is isomorphic to $\bar\tau$.  
\begin{theo}
There is a closed embedding $Y(\bar\tau)\rightarrow X(\bar\tau)$ of the $\bar\tau$-component of the eigenvariety into the finite slope space. 
\end{theo}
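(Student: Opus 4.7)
The plan is to construct a morphism $\iota: Y(\bar\tau) \to X(\bar\tau)$ directly from the Galois and Hecke data carried by the eigenvariety, and then to verify that it is a closed embedding by combining the axioms of Definition \ref{defneigenvar} with the surjectivity of the Hecke algebra into $\Gamma(X(\bar\tau), \Ocal_{X(\bar\tau)})$.

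First I would construct the map. The pseudo-character $T_Y$ reduces to $\bar\tau$ on $Y(\bar\tau)$ and hence induces $Y(\bar\tau) \to \Xfrak_{\bar\tau}$. The weight map $\omega$ supplies the $\Wcal^d$-factor of $\Tcal^d$, while the units $\delta_i(p) := \psi_p(h_i) \in \Gamma(Y, \Ocal_Y^\times)$, matching the recipe $\psi_p^{\rm gal}(h_i) = \delta_i(p)$ used to equip $X(\bar\tau)$ with its Hecke action, supply the $\Gbb_m^d$-factor. Assembling these yields
\[ \iota: Y(\bar\tau) \longrightarrow \Xfrak_{\bar\tau} \times \Wcal^d \times \Gbb_m^d = \Xfrak_{\bar\tau} \times \Tcal^d. \]
To see that $\iota$ lands in the Zariski-closed subspace $X(\bar\tau)$, I would use Zariski-density of $Z \cap Y(\bar\tau)$ (axiom (iii)): at each classical $z = (\pi, \Rcal)$ the representation $\rho_z|_{\mathcal{G}_p}$ is crystalline, and $\Rcal$ together with the weight $\underline k$ and the normalization $\psi_p|_U = \chi \delta_B^{-1/2}\delta_{\underline k}$ pins down exactly the parameters of the trianguline structure on ${\bf D}_{\rm rig}^\dagger(\rho_z|_{\mathcal{G}_p})$ that $\iota(z)$ predicts. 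Hence $\iota(Z \cap Y(\bar\tau)) \subset X(\bar\tau)^{\rm reg}$, and Zariski-closedness of $X(\bar\tau)$ forces $\iota(Y(\bar\tau)) \subset X(\bar\tau)$. Equivalently, one can invoke Corollary \ref{universalrefined} after checking that the union of components of $Y(\bar\tau)$ meeting $Z$ is a refined family in the sense of Definition \ref{defnrefinedfamily}.

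Next, for finiteness: the composition $Y(\bar\tau) \xrightarrow{\iota} X(\bar\tau) \to \Wcal^d \times \Gbb_m$ (via $\omega$ and $\psi(u_0)^{-1}$) coincides with the map $\nu$ of axiom (i), which is finite. Since the projection $X(\bar\tau) \to \Wcal^d \times \Gbb_m$ is separated, factoring $\iota$ through its graph $Y(\bar\tau) \hookrightarrow X(\bar\tau) \times_{\Wcal^d \times \Gbb_m} Y(\bar\tau)$ (closed by separatedness) followed by projection to $X(\bar\tau)$ (a base change of $\nu$) exhibits $\iota$ as finite. To promote this to a closed immersion it remains to check surjectivity of $\iota^\sharp$ on sections over $\nu^{-1}(V)$ for affinoid opens $V \subset \Wcal^d \times \Gbb_m$. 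By axiom (ii), $\Gamma(\nu^{-1}(V), \Ocal_Y)$ is generated over $\Gamma(V, \Ocal)$ by $\psi(\Hcal)$; the unramified part $\psi_{\rm ur}(\Hcal_{\rm ur})$ lies in the image of $\iota^\sharp$ because it factors through the pseudo-character, $\Hcal_{\rm ur} \to R_{\bar\tau} \to \Gamma(X(\bar\tau), \Ocal_{X(\bar\tau)})$, by Chebotarev; the Atkin-Lehner generators $\psi_p(h_i)$ lie in the image because they pull back from the $\Gbb_m^d$-coordinates on $\Tcal^d$; and the $\omega$- and $\psi(u_0)^{-1}$-contributions factor through $\iota$ by construction. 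Hence $\iota^\sharp$ is surjective and $\iota$ is a closed immersion.

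The main obstacle I expect is the pointwise compatibility of normalizations: verifying that, at every classical $z = (\pi, \Rcal)$, the Bellaïche--Chenevier recipe really reproduces the unique triangulation of ${\bf D}_{\rm rig}^\dagger(\rho_z|_{\mathcal{G}_p})$ indexed by $\Rcal$, with parameters $\delta_i$ matching those $\iota$ assigns to $z$, and that the resulting family is indeed refined (Definition \ref{defnrefinedfamily}) on the components of interest. Once this pointwise compatibility on the dense classical locus is pinned down, the Zariski-closure and the surjectivity step are formal consequences of the structures built in Sections 2--4.
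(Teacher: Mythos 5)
Your argument is correct and is in essence the paper's proof: the map is built from the refined-family structure on $Y(\bar\tau)$ (the paper cites \cite[Proposition 7.5.13]{BellaicheChenevier} for exactly the pointwise compatibility of normalizations you flag as the main obstacle, and then applies Corollary~\ref{universalrefined}), and the closed-immersion step in both cases rests on axiom (ii) of Definition~\ref{defneigenvar} together with the factorisation of $\psi$ through $\psi^{\rm gal}$. The one place you deviate is the final step: the paper observes that $f^\sharp$ is surjective on global sections over the quasi-Stein space $\nu_1^{-1}(V)$ and concludes by the fact that morphisms of quasi-Stein spaces are determined by their global sections, whereas you first establish finiteness of $\iota$ by the graph-and-base-change argument and then invoke the same surjectivity. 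Your packaging is a bit more explicit -- once $\iota$ is finite, $\iota_\ast\Ocal_{Y(\bar\tau)}$ is coherent and surjectivity over a quasi-Stein covering follows from Kiehl's theorems rather than a Stein-theoretic uniqueness statement -- but the crucial input (Hecke surjectivity from the eigenvariety axiom and the matching of $\psi$ with $\psi^{\rm gal}$) is identical, and the finiteness of $\nu$ that you use is already needed in the paper to know that $\nu^{-1}(V)$ is affinoid.
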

\begin{proof}
By \cite[Proposition 7.5.13]{BellaicheChenevier} the restriction of $T_Y$ to $\mathcal{G}_p$ gives rise to a refined family of $\mathcal{G}_p$-representations. By Corollary $\ref{universalrefined}$ this yields a map $Y(\bar\tau)\rightarrow X(\bar\tau_p)$ and hence a map
\[f:Y(\bar\tau)\rightarrow X(\bar\tau).\]
We claim that this map is a closed immersion. Let $\nu_1:X(\bar\tau)\rightarrow \Wcal^d\times \Gbb_m$ be the morphism that is given by the projection to $\Wcal^d$ and by the evaluation of $\psi_p^{\rm gal}(u_0)^{-1}$. For an affinoid open $V\subset \Wcal^d\times \Gbb_m$ we obtain a commutative diagram
\[
\begin{xy}
\xymatrix@!R{
**[r]\Hcal\otimes_\Z\Gamma(V,\Ocal_{\Wcal^d\times\Gbb_m}) \ar[r]^{\psi\otimes\nu^\sharp}\ar[rd]_{\psi^{\rm gal}\otimes\nu_1^\sharp} & \Gamma(\nu^{-1}(V),\Ocal_{Y(\bar\tau)})\\
& \Gamma(\nu_1^{-1}(V),\Ocal_{X(\bar\tau)}).\ar[u]_{f^\sharp} 
}\end{xy}\]
In this diagram the upper arrow is surjective and hence so is $f^\sharp$. However, $\nu_1^{-1}(V)$ is a Stein space and $\nu^{-1}(V)$ is affinoid. As morphisms between quasi-Stein spaces are given by the induced maps on the global sections, this yields the claim. \end{proof}
\begin{cor}
\noindent {\rm (i)} Assume that $\bar\tau={\rm tr}\,\bar\rho$ for an absolutely irreducible representation $\bar\rho$. Then there is a family $\Vcal_{\bar\rho}$ of $\mathcal{G}_{E,S}$-representations with trace $T_Y$ on $Y(\bar\rho)=Y(\bar\tau)$. \\
\noindent {\rm (ii)} If $Z\subset Y(\bar\rho)$ is an irreducible component containing at least one point $y$ such that $\Vcal_{\bar\rho}\otimes k(y)|_{\mathcal{G}_p}$ is absolutely irreducible, then the representations on the fibers of $\Vcal$ are trianguline when restricted to $\mathcal{G}_p$.\\ 
\noindent {\rm (iii)}The family $\Vcal|_{\mathcal{G}_p}$ is trianguline over a dense Zariski-open subset of $Z$.
\end{cor}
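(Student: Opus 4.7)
The plan is to deduce all three parts from Corollary~\ref{familyofGalrep} via the closed embedding $f:Y(\bar\tau)\hookrightarrow X(\bar\tau)$ provided by the preceding theorem, together with the observation that for absolutely irreducible residual representations pseudo-character deformations coincide with genuine representation deformations.

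For part~(i), since $\bar\tau={\rm tr}\,\bar\rho$ with $\bar\rho$ absolutely irreducible, Proposition~(iv) of \S 2.1 identifies $R_{\bar\tau}$ with the universal deformation ring of $\bar\rho$, so there is a canonical family of $\mathcal{G}_{E,S}$-representations on $\Xfrak_{\bar\tau}$ whose trace is the universal pseudo-character $\tau$. Pulling back along $Y(\bar\tau)\hookrightarrow X(\bar\tau)\to\Xfrak_{\bar\tau}$ produces the required family $\Vcal_{\bar\rho}$ on $Y(\bar\rho)=Y(\bar\tau)$ with trace $T_Y$.

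For parts~(ii) and~(iii), let $Z\subset Y(\bar\rho)$ be an irreducible component containing a point $y$ with $\Vcal_{\bar\rho}\otimes k(y)|_{\mathcal{G}_p}$ absolutely irreducible. The image $f(Z)$ is irreducible and closed in $X(\bar\tau)$, hence contained in some irreducible component $Z'\subset X(\bar\tau)$. To invoke Corollary~\ref{familyofGalrep}(ii),(iii) on $Z'$, I will verify its hypotheses: absolute irreducibility of $T_{\bar\tau}|_{\mathcal{G}_p}$ at $f(y)$ is immediate from the hypothesis on $y$ and the compatibility $f^\ast(T_{\bar\tau})=T_Y$; and a point of $Z'$ mapping into $X^{\rm reg}(\bar\tau_p)$ is produced as follows. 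Every irreducible component of the eigenvariety $Y$ meets the Zariski-dense accumulation set of classical automorphic points from Definition~\ref{defneigenvar}, so $Z$ contains such a classical point $z$; using the accumulation condition~(vi) of Definition~\ref{defnrefinedfamily} one may arrange $z$ to have arbitrarily large gaps between consecutive Hodge-Tate weights, which forces the refinement characters $(\delta_1(z),\dots,\delta_d(z))$ to avoid the forbidden ratios $x^{-n}$ and $\chi x^n$, i.e.\ to lie in $\Tcal_d^{\rm reg}$. Corollary~\ref{familyofGalrep}(ii),(iii) then gives trianguline-ness of $\Vcal_{\bar\tau}|_{\mathcal{G}_p}$ at every point of $Z'$ and a Zariski-open dense $U'\subset Z'$ over which $\Vcal_{\bar\tau}|_{\mathcal{G}_p}$ is a trianguline family. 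Pulling back via $f$, and using that $\Vcal_{\bar\rho}$ and $f^\ast\Vcal_{\bar\tau}$ are canonically isomorphic wherever the common trace is absolutely irreducible (since absolutely irreducible representations are determined by their traces), yields~(ii) at every point of $Z$ and~(iii) on the Zariski-open dense subset $f^{-1}(U')\cap Z\subset Z$.

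The main obstacle is the verification that every such component $Z$ contains a classical crystalline point whose refinement lies in $\Tcal_d^{\rm reg}$; everything else is formal bookkeeping. This is where one must combine the Zariski-density of classical points in $Y(\bar\rho)$ with the accumulation property~(vi) of Definition~\ref{defnrefinedfamily} in order to keep $f(z)$ away from the complement of the regular locus. A minor secondary issue is the identification $\Vcal_{\bar\rho}=f^\ast\Vcal_{\bar\tau}$, which rests on the fact that a lift of an absolutely irreducible pseudo-character to a genuine representation is unique up to isomorphism, so the two families agree on the dense locus of absolute irreducibility and hence everywhere.
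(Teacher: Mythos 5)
Your overall route exactly mirrors the paper's: reduce parts (ii) and (iii) to Corollary~\ref{familyofGalrep} by pushing $Z$ forward along the closed embedding $Y(\bar\tau)\hookrightarrow X(\bar\tau)$ into an irreducible component $Z'$ of $X(\bar\tau)$, check that $Z'$ has a point where $T_{\bar\tau}|_{\mathcal{G}_p}$ is absolutely irreducible, and then produce a point of $Z$ mapping into $X^{\rm reg}(\bar\tau_p)$. Part (i), the compatibility $f^\ast T_{\bar\tau}=T_Y$, and the identification $\Vcal_{\bar\rho}=f^\ast\Vcal_{\bar\tau}$ on the locus of absolute irreducibility are all handled correctly and match the paper's (very terse) argument.

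There is, however, a gap in the one step you flag as the main obstacle. You claim that invoking the accumulation property~(vi) of Definition~\ref{defnrefinedfamily} to produce a classical point $z$ with arbitrarily large consecutive Hodge--Tate weight gaps ``forces the refinement characters $(\delta_1(z),\dots,\delta_d(z))$ to avoid the forbidden ratios $x^{-n}$ and $\chi x^n$.'' This implication does not hold. Membership of $\delta_i/\delta_j$ in $\Tcal^{\rm reg}$ constrains simultaneously the weight of the character (its restriction to $\Z_p^\times$) \emph{and} its value at $p$ (the Frobenius eigenvalue ratio). The forbidden characters $x^{-n}$ and $\chi x^n$ exist for \emph{every} $n\geq 0$, so enlarging the weight gap $\kappa_j(z)-\kappa_i(z)$ merely changes which forbidden exponent $n$ is relevant; it does not by itself exclude the identity $\delta_i/\delta_j=\chi x^n$, which additionally requires $\delta_i(p)/\delta_j(p)$ to equal the corresponding power of $p$, a condition the weight-gap argument does not touch. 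The paper instead simply asserts (and in effect cites) that the classical refined points of $\Zcal$ satisfy the regularity condition; this is true, but because the Satake parameters of the $p$-refined automorphic representations underlying $\Zcal$ are distinct and the crystalline Frobenius eigenvalues at classical points satisfy the relevant admissibility/archimedean constraints, which directly forbid the bad ratios. So your argument needs to replace the weight-gap step with this (or another) justification that the classical points already lie over $\Tcal_d^{\rm reg}$.
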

\begin{proof}
(i) This is obvious as $\bar\rho$ is absolutely irreducible.\\
(ii), (iii) This follows from Corollary $\ref{familyofGalrep}$. We only need to see that there are points of $Z$ that map to the regular part $X(\bar\tau_p)^{\rm reg}\subset X(\bar\tau_p)$. However this follows from the density of $\Zcal$ and the fact that the points in $\Zcal$ satisfy the regularity condition. 
\end{proof}
\subsection{The $2$-dimensional case}
Before we make a conjecture concerning the cases (b) and (c) we first study case (a).  This result is already contained in Kisin's papers \cite{KisinsXfs} and \cite{KisinFM}. 
We assume that \[\bar\rho:\mathcal{G}_{\Q,S}\longrightarrow \GL_2(\Fbb)\]
is odd and absolutely irreducible when restricted to ${\rm Gal}(\Q_S,\Q(\zeta_p))$. As above we write $\bar\tau$ for the trace of $\bar\rho$. Further we assume that $\bar\rho|_{\mathcal{G}_p}$ is not an extension of a character $\psi$ by $\psi(1)$. In this case Kisin \cite{KisinFM} has proved (special cases of) the Fontaine-Mazur conjecture: A lift $\rho:\mathcal{G}_{\Q,S}\rightarrow \GL_2(\Ocal_F)$ with values in the ring of integers of a finite extension $F $ of $\Q_p$ is (up to twist) modular if it is de Rham with distinct Hodge-Tate weights at $\mathcal{G}_p$ and becomes semi-stable after restriction to the Galois group of an abelian extension of $\Q_p$. Especially the representations that are crystalline at $p$ with Hodge-Tate weights $0$ and $k>0$ are modular. 
\begin{lem}
Let $H$ be a group acting on the trivial vector bundle $\Vcal$ of rank $d$ over a noetherian scheme $X$. Then the map
\[X\ni x\longmapsto \dim_{k(x)}\big(\Vcal\otimes k(x)\big)^H\]
is upper semi-continuous. 
\end{lem}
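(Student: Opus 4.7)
The plan is to realise the fibrewise $H$-invariants as the kernel of an $\Ocal_X$-linear map between two locally free sheaves of finite rank, and then to invoke the standard fact that the fibre-dimension of the kernel of such a map is upper semi-continuous.

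First I would observe that since $\Vcal$ is the trivial vector bundle of rank $d$, the sheaf $\Endo_{\Ocal_X}(\Vcal)$ is locally free of rank $d^2$ and the $H$-action provides a subset $\{h - \id : h \in H\}$ of its global sections. Working locally on an affine open $\Spec A \subset X$, the $A$-submodule of $\Endo_A(A^d) \cong A^{d^2}$ generated by this subset is finitely generated, because $A$ is noetherian. Hence there exist finitely many elements $h_1,\dots,h_N \in H$ such that locally every $h - \id$ is an $\Ocal_X$-linear combination of $h_1 - \id,\dots,h_N - \id$.

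Next I would consider the $\Ocal_X$-linear morphism
\[\Psi : \Vcal \longrightarrow \Vcal^{\oplus N},\qquad v \longmapsto \bigl((h_1 - \id)v,\,\dots,\,(h_N - \id)v\bigr),\]
and verify that at every point $x \in X$
\[\ker\bigl(\Psi \otimes_{\Ocal_X} k(x)\bigr) \;=\; \bigl(\Vcal \otimes_{\Ocal_X} k(x)\bigr)^H.\]
The inclusion $\supseteq$ is tautological. For $\subseteq$, if $v$ lies in the kernel and $h \in H$ is arbitrary, then writing $h - \id = \sum_i a_i (h_i - \id)$ on a neighbourhood of $x$ with $a_i \in \Ocal_X$ and evaluating at $x$ yields $(h - \id)v = 0$ in $\Vcal \otimes k(x)$, so $v$ is $H$-invariant.

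Finally, for a morphism between locally free sheaves of finite rank the rank of the induced map on fibres is lower semi-continuous, equivalently the dimension of the fibrewise kernel is upper semi-continuous. Applied to $\Psi$ this proves the lemma. The only delicate point I expect is the finite generation of the submodule of $\Endo_{\Ocal_X}(\Vcal)$ generated by $\{h - \id\}$; it is precisely the noetherian hypothesis on $X$ which allows us to replace the potentially infinite collection of invariance conditions ``$hv = v$ for all $h \in H$'' by the finitely many linear conditions encoded in $\Psi$.
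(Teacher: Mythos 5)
Your argument is correct, and it takes a genuinely different route from the paper's. The paper argues via Chevalley's theorem: it first notes that $r(x)=\dim_{k(x)}(\Vcal\otimes k(x))^H$ can only increase under specialization, so it suffices to show each level set $A=\{x\mid r(x)\geq m\}$ is constructible; it then exhibits $A$ as the image under the projection to $X$ of $C\setminus 0_X$, where $C$ is the (constructible, by Chevalley) image of the wedge map $\bigwedge^m\colon B\times_X\cdots\times_X B\to\bigwedge^m V$ applied to the Zariski-closed subscheme $B$ of $H$-invariants inside the geometric vector bundle $V$; constructible together with stable under specialization then gives closed. You instead invoke noetherianness at the outset, inside the module $\Endo_{\Ocal_X}(\Vcal)$, to replace the a priori infinite family of linear conditions $\{(h-\id)v=0\}_{h\in H}$ by a finite subfamily $h_1,\dots,h_N$ with the same fibrewise solution set (if a finitely generated module is generated by a set $S$, it is generated by a finite subset of $S$). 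This identifies the fibrewise $H$-invariants with the fibrewise kernel of a single $\Ocal_X$-linear map $\Psi\colon\Vcal\to\Vcal^{\oplus N}$ between locally free sheaves, and the lower semi-continuity of fibrewise rank (equivalently, upper semi-continuity of the fibrewise kernel dimension) finishes the argument. Your route is more elementary: it avoids the wedge construction and Chevalley's theorem entirely and makes the role of the noetherian hypothesis --- cutting infinitely many invariance conditions down to finitely many --- completely explicit, whereas in the paper noetherianness enters only indirectly through constructibility.
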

\begin{proof}
Let us denote this map by $r$. If $x\rightsquigarrow y$ is a specialization, then it is obvious that $r(x)\leq r(y)$. We need to show that for each $m\in\Z$ the set
\[A=\{x\in X\mid r(x)\geq m\}\]
is constructible (and hence closed, as it is closed under specialization). Let us write $V=\underline\Spec_X({\rm Sym}^\bullet\Vcal^\vee)\cong \Abb^d_X$ for the geometric vector bundle associated with $\Vcal$ and let us write $B\subset V$ for the Zariski-closed subset of $H$-invariants in $V$. We consider the morphism
\[f=\bigwedge\nolimits^m:V\times_X\dots\times_XV\longrightarrow \bigwedge\nolimits^mV\cong \Abb_X^{d'}.\]
Then $C=f(B\times_X\dots\times_XB)\subset \bigwedge\nolimits^mV$ is constructible and hence so is $C\backslash 0_X$, where $0_X\subset \bigwedge^mV$ denotes the zero-section. As $A$ is the image of $C\backslash 0_X$ under the projection to $X$, it follows that $A$ is constructible which yields the claim.
\end{proof}
\begin{cor}
Let $N\geq 1$ be an integer. There is a Zariski-closed subspace $X_N(\bar\tau)$ of $X(\bar\tau)$ such that $x\in X_N(\bar\tau)$ if and only if the representation $\rho_x=\Vcal_{\bar\tau}\otimes k(x)$ has Artin conductor $N(\rho_x)$ dividing $N$. 
\end{cor}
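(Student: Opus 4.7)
The plan is to express the condition ``$N(\rho_x)\mid N$'' as a finite conjunction of semi-continuity statements for invariants under closed ramification subgroups of $\mathcal{G}_{E,S}$, each of which is directly controlled by the preceding lemma.

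\textbf{Reduction to local bounds.} The Artin conductor factors as $N(\rho) = \prod_{\ell\in S,\,\ell\neq p} \ell^{a_\ell(\rho)}$, with $a_\ell(\rho) = 0$ for $\ell\notin S$ since $\rho$ factors through $\mathcal{G}_{E,S}$. The condition $N(\rho_x)\mid N$ is therefore the finite conjunction of inequalities $a_\ell(\rho_x) \leq v_\ell(N)$ over $\ell\in S\setminus\{p\}$, and it suffices to show that for each such $\ell$ and each integer $c\geq 0$ the locus $\{x : a_\ell(\rho_x)\leq c\}$ is Zariski-closed in $X(\bar\tau)$.

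\textbf{Uniform triviality on a deep wild subgroup.} Fix such a prime $\ell$ and choose a prime of $E_S$ above it to obtain $\mathcal{G}_\ell\hookrightarrow\mathcal{G}_{E,S}$; let $(G^u)_{u\geq 0}$ denote the upper-numbering ramification filtration, with $I_\ell=G^0\supset P_\ell=G^{0+}$ the (pro-$\ell$) wild inertia. Since $\bar\tau$ takes values in a finite field and is continuous, it factors through a finite quotient of $\mathcal{G}_{E,S}$, and we may choose an open normal subgroup $M\subset\mathcal{G}_{E,S}$ acting trivially on $\bar\tau$. Working locally on $X(\bar\tau)$ (and in general after passing to the trivializing cover $\tilde{\mathfrak X}^\square_{\bar\tau}\to \mathfrak X_{\bar\tau}$, on which an actual representation exists), the family $\Vcal_{\bar\tau}$ is given by a continuous homomorphism to $\GL_d$ of an affinoid algebra whose restriction to $M$ lies in the pro-$p$ kernel of reduction modulo $\Ocal_X^{++}$. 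The subgroup $U := M\cap P_\ell$ is then an open normal subgroup of $\mathcal{G}_\ell$ contained in $P_\ell$, hence pro-$\ell$; as any continuous homomorphism from a pro-$\ell$ group to a pro-$p$ group is trivial when $\ell\neq p$, $U$ acts trivially on the entire family $\Vcal_{\bar\tau}$.

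\textbf{Finite-sum expression and application of the lemma.} The $\mathcal{G}_\ell$-action on $\Vcal_{\bar\tau}$ therefore factors through the quotient $\Gamma=\mathcal{G}_\ell/U$, whose wild part $P_\ell/U$ is a \emph{finite} $\ell$-group. The upper-numbering ramification filtration of $\Gamma$ has only finitely many breaks $0=v_0<v_1<\dots<v_m$ (intrinsic to $\mathcal{G}_\ell$ and $U$, and in particular independent of $x$), and the Artin conductor of any representation factoring through $\Gamma$ takes the finite form
\[
a_\ell(V) \;=\; \sum_j c_j\cdot \mathrm{codim}\,V^{G^{v_j}},
\]
where the constants $c_j\geq 0$ depend only on the $v_j$ (and the usual normalization of the tame and Swan contributions). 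Applying the preceding lemma to each of the closed subgroups $G^{v_j}\subset \mathcal{G}_{E,S}$ acting on $\Vcal_{\bar\tau}$, the functions $x\mapsto \dim_{k(x)}V_x^{G^{v_j}}$ are Zariski upper semi-continuous, so each $x\mapsto \mathrm{codim}\,V_x^{G^{v_j}}$ is Zariski lower semi-continuous. As a non-negative linear combination of finitely many such functions, $x\mapsto a_\ell(\rho_x)$ is Zariski lower semi-continuous, and $\{x : a_\ell(\rho_x)\leq c\}$ is Zariski-closed. Intersecting these closed conditions over the finite set $S\setminus\{p\}$ yields the required Zariski-closed subspace $X_N(\bar\tau)\subset X(\bar\tau)$.

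\textbf{Main obstacle.} The decisive step is the uniform-triviality argument, which collapses the a priori infinitely-many-break ramification filtration of $\mathcal{G}_\ell$ to that of a fixed finite Galois quotient over the whole of $X(\bar\tau)$. Without this reduction the formula for $a_\ell$ would involve an infinite sum to which the lemma cannot be applied verbatim. The collapse uses only the pro-$\ell$/pro-$p$ dichotomy together with the finiteness of the residual pseudo-character, after which the lemma delivers semi-continuity of each of the finitely many codimension invariants entering $a_\ell$.
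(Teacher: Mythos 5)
Your proof follows the same approach as the paper's: decompose the Artin conductor as a finite sum of (lower semi-continuous) codimensions of invariants under ramification subgroups and apply the preceding semi-continuity lemma term by term, intersecting over the finitely many ramified $\ell\neq p$. The paper's one-sentence proof (which applies the lemma to the universal deformation on $R_{\bar\rho}$ and invokes the conductor formula) leaves implicit the uniformity you spell out — namely that the wild inertia at $\ell\neq p$ acts through a fixed finite quotient across the whole family, because a pro-$\ell$ group has no nontrivial continuous maps to the pro-$p$ kernel of reduction — and this is exactly what is needed for the conductor to be a \emph{finite} non-negative combination of semi-continuous terms, so your filling in of that step is a genuine improvement in rigor rather than a change of method.
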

\begin{proof}
This follows by applying the above lemma to the universal deformation on $R_{\bar\rho}$ and by the formula
\[{\rm val}_\ell(N(\rho))=\sum_{i\geq 0} \frac{\dim V-\dim V^{I_{\ell,i}}}{[I_{\ell, i}:I_{\ell}]} \]
for the $\ell$-adic valuation of the Artin conductor of a representation $\rho:\mathcal{G}_{\Q,S}\rightarrow \GL(V)$. Here $I_\ell\subset {\rm Gal}(\bar\Q_\ell/\Q_\ell)$ is the inertia group and $I_{\ell,\bullet}$ is the filtration by higher ramification groups. 
\end{proof}
Let $D$ be a definite quaternion algebra which splits at $p$ and let $Y_N$ be the eigencurve of tame level $N$ associated to $D$. Further we write $X_N^0(\bar\rho)$ for the subspace of the finite slope space $X(\bar\tau)$ where at least one Hodge-Tate weight is $0$ and where the Artin conductor is bounded by $N$. Compare \cite[Conjecture 11.8]{KisinsXfs} and \cite[Corollary]{KisinFM} for the following result. 
\begin{prop}
The inclusion $Y_N(\bar\rho)\hookrightarrow X^0_N(\bar\rho)$ is an inclusion of irreducible components. 
\end{prop}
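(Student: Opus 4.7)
The approach is to combine the closed embedding $Y(\bar\tau)\hookrightarrow X(\bar\tau)$ from the preceding theorem with the density of crystalline points in the finite slope space and Kisin's Fontaine--Mazur theorem from \cite{KisinFM}. First I would check that the closed embedding $Y(\bar\tau)\hookrightarrow X(\bar\tau)$ restricts to a closed embedding $Y_N(\bar\rho)\hookrightarrow X^0_N(\bar\rho)$. The classical overconvergent eigenforms of weight $k\geq 2$ and tame conductor dividing $N$ are Zariski-dense in $Y_N(\bar\rho)$, and the associated Galois representations have Hodge--Tate weights $(0,k-1)$ (so one of them vanishes, by Lemma \ref{Senpoly}) and Artin conductor dividing $N$. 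Since the condition that one Hodge--Tate weight is zero and the conductor bound are both Zariski-closed (the latter by the preceding corollary), the image of $Y_N(\bar\rho)$ indeed lands in $X^0_N(\bar\rho)$.

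The substantive claim then is that any irreducible component $Z\subset X^0_N(\bar\rho)$ which meets $Y_N(\bar\rho)$ is already contained in it; since $Y_N(\bar\rho)$ is one-equidimensional and closed in $X^0_N(\bar\rho)$, this forces $Z$ to be one of the irreducible components of $Y_N(\bar\rho)$. For such a $Z$, the density result established in the previous section (applied to the pullback of a component of the local finite slope space) provides a Zariski-dense accumulation subset of $Z$ consisting of crystalline representations $\rho$ with distinct Hodge--Tate weights, one of which is $0$, Artin conductor dividing $N$, and residual representation $\bar\rho$. Here one uses that the locus where the two Hodge--Tate weights collide (forcing both to be zero) is a proper Zariski-closed subset of $Z$ cut out by the vanishing of $\omega_2$, so generic points of $Z$ automatically satisfy distinctness.

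To each such crystalline $\rho$ I would apply Kisin's Fontaine--Mazur theorem: under the standing hypotheses that $\bar\rho$ is odd, absolutely irreducible after restriction to $\operatorname{Gal}(\Q_S/\Q(\zeta_p))$, and $\bar\rho|_{\mathcal{G}_p}$ is not a twist of an extension of $\psi$ by $\psi(1)$, every such $\rho$ is, up to twist, modular by \cite{KisinFM}, and its tame conductor divides $N$ by construction, so $\rho$ corresponds to a classical point of $Y_N(\bar\rho)$. The Zariski-closure of this dense set of points equals $Z$ on one hand and lies in the closed subspace $Y_N(\bar\rho)$ on the other, giving $Z\subset Y_N(\bar\rho)$ as desired. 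The main obstacle is checking that the hypotheses of Kisin's theorem genuinely apply to a Zariski-dense set of points in each relevant component: one needs to propagate the local-at-$p$ non-degeneracy from the residual representation to a dense locus of lifts, and to ensure that the "distinct Hodge--Tate weights" hypothesis fails only on a proper closed subset, so that the modularity assertion still captures enough points to force the closure argument to conclude.
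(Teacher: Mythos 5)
Your proposal is correct and follows essentially the same route as the paper: the paper's (terser) proof is to observe that the components of $X^0_N(\bar\rho)$ with dense crystalline points must have dimension $1$, which is exactly the consequence of the closure argument you carry out explicitly — the dense crystalline points are modular by Kisin, hence lie in the closed subspace $Y_N(\bar\rho)$, whose one-dimensionality forces the identification. One small remark: your closing worry about ``propagating the local-at-$p$ non-degeneracy from the residual representation to a dense locus of lifts'' is unfounded, since the non-degeneracy hypothesis in Kisin's theorem (oddness, absolute irreducibility over $\Q(\zeta_p)$, the exclusion of $\bar\rho|_{\mathcal{G}_p}$ being an extension of $\psi$ by $\psi(1)$) is a condition on the fixed residual $\bar\rho$ alone, so it holds automatically for every lift; the only lift-dependent hypothesis is distinctness of Hodge--Tate weights, which you do handle correctly via the Zariski-closedness of the locus $\omega_2 = 0$ and the fact that classical points of $Y_N(\bar\rho)$ have weights $(0,k-1)$ with $k\geq 2$.
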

\begin{proof}
It is enough to check that the irreducible components of $X^0_N(\bar\rho)$ where the crystalline points are dense have dimension $1$. This follows from the modularity of those representations and the fact that the eigencurve is $1$-dimensional. 
\end{proof}

\subsection{A general conjecture}
We now assume that we are in the situation (b) or (c). Especially $E$ is an imaginary quadratic extension of $\Q$. We choose a complex conjugation $c\in {\rm Gal}(\bar\Q,\Q)$ and also write $c$ for its image in ${\rm Gal}(E,\Q)$. Given a representation $\rho:\mathcal{G}_{E,S}\rightarrow \GL(V)$, we write $V^\perp$ for the dual of the representation  $g\mapsto c\rho(g)c$. The aim of this section is to give a global version of an infinitesimal conjecture of Bellaiche and Chenevier.
For a point $x=(\rho,\underline{\delta})$ we define the \emph{minimal finite slope space $X(\bar\rho,x)$ containing $x$} similar to the deformation functor of \cite[Definition 7.6.2]{BellaicheChenevier}: 
\begin{defn} Let  $x=(\rho,\underline{\delta})$. We write $X^\perp(\bar\rho)\subset X(\bar\rho)$ for the Zariski-closed subspace of $X$ where $\rho^\perp\cong \rho(d-1)$.
The minimal finite slope space $X(\bar\rho,x)$ containing $x$ is the subspace of $X^\perp(\bar\rho)$, where the action of the inertia groups $I_w$ at places $w\in S$ not dividing $p$ factors over the $I_w$-action given by $x$.
\end{defn}
This space is a closed subspace of $X(\bar\rho)$ and
for any $x\in X(\bar\rho)$ the complete local ring $\hat\Ocal_{X(\bar\rho,x),x}$ pro-represents the deformation functor defined in \cite[Definition 7.6.2]{BellaicheChenevier}.
\begin{lem}
If $x\in X(\bar\rho)(L)$ for a finite extension $L$ of $\Q_p$, then $X(\bar\rho,x)$ is a union of connected components of $X^\perp(\bar\rho)\otimes_{\Q_p}L$.
\end{lem}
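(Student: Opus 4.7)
My plan is to show that $X(\bar\rho,x)$ is both Zariski-closed and Zariski-open in $X^\perp(\bar\rho)\otimes_{\Q_p}L$, whence a union of connected components. Closedness I will obtain immediately from the definition: for each $w\in S$ with $w\nmid p$, the condition that the inertia $I_w$ act through the quotient $I_w/\ker(\rho_x|_{I_w})$ is cut out by the Zariski-closed vanishing equations $\rho(\sigma)=1$ for $\sigma$ ranging over $\ker(\rho_x|_{I_w})$ on the universal family.

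For Zariski-openness, I will compare complete local rings. Fixing an $L$-point $y\in X(\bar\rho,x)$, the ring $\hat\Ocal_{X(\bar\rho,x),y}$ pro-represents the deformation functor of \cite[Definition 7.6.2]{BellaicheChenevier}, by the remark preceding the lemma. This functor differs from the deformation functor of polarizable representations, pro-represented by $\hat\Ocal_{X^\perp(\bar\rho)\otimes L,y}$, only by the additional inertia-factorization condition at the places $w\in S$ with $w\nmid p$. My plan is to show this extra condition is redundant on polarizable deformations of $\rho_y$; once this is established, the two complete local rings agree, and $X(\bar\rho,x)$ therefore contains a Zariski-open neighborhood of $y$ in $X^\perp(\bar\rho)\otimes L$.

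To establish the redundancy of the inertia condition, I intend to invoke the rigidity of the Weil--Deligne type at $w\nmid p$ under polarizable deformations. The image of the wild inertia $P_w$ in $\GL_d(A)$, for $A$ an artinian local deformation coefficient, is a pro-$\ell$ subgroup with $\ell$ the residue characteristic of $w$ (so $\ell\neq p$); such a subgroup has order invertible in $A$, and its deformations are therefore rigid by Schur--Zassenhaus-type arguments. The tame monodromy operator $N$, which a priori could deform nontrivially, ought to be pinned down by the polarization $\rho^\perp\cong\rho(d-1)$ via the associated weight-monodromy filtration. The main obstacle I anticipate is making this last rigidity precise: one must combine Grothendieck's monodromy theorem with the self-dual pairing on the Weil--Deligne representation, in the style of the analysis in \cite[Chapter 7]{BellaicheChenevier}.
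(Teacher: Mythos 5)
The paper proves this in one line: it cites \cite[Lemma 7.8.17]{BellaicheChenevier}, which asserts that the restriction $T|_{I_w}$ of the pseudo-character to the inertia at a place $w\nmid p$ is locally constant on $X$, and the lemma follows because the defining condition of $X(\bar\rho,x)$ is a condition on $T|_{I_w}$.

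Your route is genuinely different: you attempt to establish local constancy of the \emph{representation} on $I_w$ (not merely of the pseudo-character) by a deformation-theoretic rigidity argument. The closedness half is fine. The openness half, however, has a real gap exactly where you flag it. The Schur--Zassenhaus argument for the wild inertia $P_w$ is sound: its image is finite of order prime to $p$, hence prime to the residue characteristic of any coefficient ring in sight, so the restriction to $P_w$ is rigid up to conjugation, and the kernel is therefore constant. But the tame part is not settled by what you write. Grothendieck's monodromy theorem pins down the \emph{semisimplification} of the image of a tame generator (its eigenvalues are roots of unity fixed on a connected component), but it does not pin down the unipotent part $N$, and the polarization $\rho^\perp\cong\rho(d-1)$ does not force $N$ to be rigid either --- polarizable families can and do have jumping monodromy at places away from $p$. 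Since the kernel of the $I_w$-action is sensitive to whether a tame generator has finite or infinite order, i.e.\ to whether $N=0$ or $N\neq 0$, your argument as stated does not show that the kernel is locally constant.

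What actually saves the day, and what \cite[Lemma 7.8.17]{BellaicheChenevier} exploits, is that the condition cut out in the definition of $X(\bar\rho,x)$ is a condition on the pseudo-character: since $\bar\rho$ is absolutely irreducible, $\rho(\sigma)=1$ for $\sigma\in\ker(\rho_x|_{I_w})$ is equivalent (by Schur) to $T(\sigma g)=T(g)$ for all $g$, which is visibly a locally constant condition once one knows the relevant trace functions of inertia elements are locally constant. You do not need --- and cannot in general have --- rigidity of the full $I_w$-representation. So I would abandon the attempt to control $N$ and instead argue at the level of the pseudo-character, either by invoking the cited lemma directly (as the paper does) or by reproving the local constancy of $T|_{I_w}$ using the combination of Schur--Zassenhaus on $P_w$ (which you already have) and Grothendieck monodromy on the tame quotient, the latter controlling characteristic polynomials and hence traces but not the nilpotent part.
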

\begin{proof}
It follows from \cite[Lemma 7.8.17]{BellaicheChenevier} that the restriction of the pseudo-character $T$ to the inertia groups $I_w$ is locally constant on $X$, if $w$ is a place not dividing $p$. This yields the claim. 
\end{proof}
We continue with the set up of \cite[7.6.2]{BellaicheChenevier}: Fix a prime $q\neq p$ that splits in $E$. If $d\equiv 0\mod 4$, then we fix another prime $q'\in \{p,q\}$, if not we set $q=q'$. Let $G$ be a unitary group in $d$ variables attached to $E/\Q$ such that 
\begin{enumerate}
\item[(i)] $G(\R)$ is compact.
\item[(ii)] If $\ell\notin\{q,q'\}$, then $G(\Q_\ell)$ is quasi-split.
\item[(iii)] If $\ell=q$ or $\ell=q'$, then $G(\Q_\ell)$ is the group of invertible elements of a central division algebra over $\Q_\ell$.
\end{enumerate}
Such a group exists by \cite[Lemma 7.6.8]{BellaicheChenevier}. 
Let $\pi$ be an automorphic representations of $G$ such that $\pi$ is only ramified at primes that split in $E$ and such that $\pi_q$ is supercuspidal. Further we assume that $\pi_p$ is unramified and its Satake-parameter has $d$ distinct eigenvalues. We denote by $Y_{\pi}$ the minimal eigenvariety containing $\pi$ and write $z\in Y_{\pi}$ for the classical points defined by $\pi$, compare \cite[7.6.2]{BellaicheChenevier}. 
\begin{lem}
The inclusion $Y_\pi(\bar\rho)\rightarrow X(\bar\rho)$ factors over $X(\bar\rho,z)$. \end{lem}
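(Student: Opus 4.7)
The plan is to verify the two defining conditions of $X(\bar\rho,z)$ on the image of the map $Y_\pi(\bar\rho)\rightarrow X(\bar\rho)$: first the self-duality $\rho^\perp\cong \rho(d-1)$ cutting out $X^\perp(\bar\rho)$, and second the constancy of the local inertia actions at the finite bad places $w\in S$ with $w\nmid p$.

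First I would note that the theorem proved above already provides a closed embedding $Y(\bar\tau)\hookrightarrow X(\bar\tau)$, which restricts to a morphism $Y_\pi(\bar\rho)\rightarrow X(\bar\rho)$ (here $\bar\tau={\rm tr}\,\bar\rho$). The classical points in $Y_\pi(\bar\rho)$ are Zariski-dense and correspond to $p$-refined automorphic representations of the unitary group $G$; the associated Galois representations $\rho_z$ satisfy the standard unitary self-duality $\rho_z^\perp\cong \rho_z(d-1)$. Since the relation $\rho^\perp\cong \rho(d-1)$ on the level of the universal pseudo-character $T_Y|_{\mathcal{G}_{E,S}}$ is an identity of continuous pseudo-characters and hence a Zariski-closed condition on $Y_\pi(\bar\rho)$, it propagates from the dense classical locus to the whole space. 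Hence $Y_\pi(\bar\rho)$ maps into $X^\perp(\bar\rho)$.

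Next, for each finite place $w\in S$ with $w\nmid p$ I would invoke the local constancy statement of \cite[Lemma 7.8.17]{BellaicheChenevier} (already used in the proof of the preceding lemma) to conclude that the restriction of the universal pseudo-character to the inertia group $I_w$ is locally constant on $X(\bar\rho)$, and therefore also on $X^\perp(\bar\rho)$. By construction $Y_\pi$ is an irreducible (minimal) eigenvariety containing the classical point $z$, so the pullback of this locally constant $I_w$-action to $Y_\pi(\bar\rho)$ is literally constant and agrees with its value at $z$. Since $X(\bar\rho,z)$ is defined precisely as the union of connected components of $X^\perp(\bar\rho)$ on which all such inertia actions coincide with those prescribed by $z$, the map $Y_\pi(\bar\rho)\rightarrow X^\perp(\bar\rho)$ factors through $X(\bar\rho,z)$, as required.

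The only subtle point I foresee is pinning down the connectedness assertion: one must use that $Y_\pi$ is minimal among eigenvarieties containing $\pi$ in the sense of Bellaiche--Chenevier, so it is either irreducible or at least has $z$ lying in a single connected component through which the map factors. Once this is granted, both conditions defining $X(\bar\rho,z)$ reduce to closed conditions that hold automorphically at $z$ and extend by the density/local-constancy arguments above.
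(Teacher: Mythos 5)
Your proposal is correct and follows essentially the same route as the paper, which simply points to \cite[7.6.2]{BellaicheChenevier} and invokes Harris--Taylor for the self-duality $\rho_\pi^\perp\cong\rho_\pi(d-1)$ at classical points. You flesh out the two steps that the paper leaves implicit — spreading the self-duality from the dense classical locus via Zariski-closedness of a pseudo-character identity, and handling the inertia condition at $w\in S$, $w\nmid p$ via the local constancy in \cite[Lemma 7.8.17]{BellaicheChenevier} (already used in the lemma just before this one) — so your argument is a more detailed version of the paper's sketch rather than a different approach.
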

\begin{proof}
The argument is similar to the one in \cite[7.6.2]{BellaicheChenevier}.
It follows from the theorem of Harris Taylor \cite{HarrisTaylor} that there are Galois representations $\rho_\pi$ attached to most classical points $\pi$  and that they satisfy the condition $\rho_{\pi}^\perp\cong \rho_\pi(d-1)$. \end{proof}
Let $\rho_\pi$ denote the representation of $\mathcal{G}_{E,S}$ associated with $\pi$. As we assume that $\bar\rho$ is absolutely irreducible and that the Hodge-Tate weights $k_1<k_2<\dots<k_d$ of $\rho_\pi|_{\mathcal{G}_p}$ are pairwise distinct, it follows from Lemma $\ref{triangulineGpfamily}$ that there is a canonical triangulation\footnote{This allows us to drop the regularity condition in $(\rho_4)$ of \cite[7.6.2]{BellaicheChenevier}} $\Fil_i(D)$ of the $(\phi,\Gamma)$-module $D={\bf D}_{\rig}^\dagger(\rho_\pi)$.  We assume that $\rho_\pi|_{\mathcal{G}_p}$ is \emph{non critically refined}, i.e. that 
\[D_{\rm cris}(\rho)=({\rm Fil}_i(D)[1/t])^\Gamma\oplus \Fil^{k_i+1}(D_{\rm cris}(\rho))\]
for all $1\leq i\leq d$. This means that the trianguline filtration is in general position with respect to the Hodge-filtration. 
\begin{conj}
Let $\pi$ be as above. The map $Y_\pi(\bar\rho)\rightarrow X(\bar\rho,z)$ is an inclusion of irreducible components of $X(\bar\rho,z)$. 
\end{conj}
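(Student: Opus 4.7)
The plan is to reduce the conjecture to the local infinitesimal conjecture of Bellaiche and Chenevier at non-critically refined crystalline points, as signalled in the introduction. By the closed embedding theorem $Y(\bar\tau)\hookrightarrow X(\bar\tau)$ of the previous subsection (applied with $\bar\tau = {\rm tr}\,\bar\rho$) together with the lemma preceding the conjecture, the map $f: Y_\pi(\bar\rho)\to X(\bar\rho,z)$ is already a closed immersion into $X(\bar\rho,z)$, and both source and target are equidimensional. Hence it suffices to show that for every irreducible component $Z$ of $Y_\pi(\bar\rho)$ we have $\dim_{k(z')} T_{z'} Z = \dim_{k(z')} T_{z'} X(\bar\rho,z)$ at a single conveniently chosen point $z' \in Z$.

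To pick $z'$, I would use that the classical refined automorphic points are Zariski-dense in every component of $Y_\pi(\bar\rho)$ and that the non-critically refined crystalline points among them are still dense; on the $X(\bar\rho,z)$-side the analogous density is the content of the crystalline density proposition together with Corollary $\ref{universalrefined}$. Fix such a $z'$ in each component of $Y_\pi(\bar\rho)$. By Corollary $\ref{familyofGalrep}$ and Lemma $\ref{triangulineGpfamily}$, the tangent space $T_{z'}X(\bar\rho,z)$ is canonically identified with the space of infinitesimal deformations of $\rho_{z'}$ which are self-dual (i.e.\ satisfy $\rho^\perp\cong \rho(d-1)$), unramified outside $S$, carry the fixed inertia type at primes in $S\setminus\{p\}$, and become trianguline at $\mathcal{G}_p$ with the refinement determined by $z'$; that is, a trianguline Selmer group. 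The tangent space $T_{z'}Y_\pi(\bar\rho)$ embeds into this Selmer group via $f$, and the conjecture follows from equality of the two tangent spaces at one such $z'$ per component, which is precisely the Bellaiche--Chenevier conjecture at non-critically refined crystalline points.

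The main obstacle is this tangent-space equality at $z'$. One inclusion, namely that every tangent vector of the eigenvariety lands in the trianguline Selmer group, is the content of Corollary $\ref{familyofGalrep}$ combined with the classical theory. The reverse inclusion is a local--global statement of $R=T$ type: every first-order trianguline deformation of $\rho_{z'}$ with the prescribed self-duality and tame-level constraints must come from an infinitesimal $p$-adic automorphic deformation. In the two-dimensional case this is essentially Kisin's modularity theorem for finite slope families; in higher dimensions no such theorem is available, and establishing it would require genuinely new automorphic input beyond the $(\phi,\Gamma)$-module techniques developed in this paper. This is why, even though density allows us to concentrate on a single point per component, the statement remains a conjecture.
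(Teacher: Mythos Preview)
The statement is a \emph{conjecture}, and the paper does not prove it. Immediately after stating it, the paper only remarks that it is equivalent to requiring the Bellaiche--Chenevier infinitesimal conjecture $\hat\Ocal_{Y_\pi,z'}\cong \hat\Ocal_{X(\bar\rho,z),z'}$ at a single point $z'$ in each irreducible component of $Y_\pi$, and notes that this is implied by the Bloch--Kato conjecture. Your proposal is therefore not being compared against a proof but against this short reduction, and in substance you reproduce it correctly: closed immersion plus a per-component local comparison at a well-chosen crystalline non-critically refined point, with the honest acknowledgment in your final paragraph that the missing input is an $R=T$-type statement unavailable for $d>2$.

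One point worth tightening: you phrase the local comparison as equality of \emph{tangent spaces}, whereas the paper phrases it as an isomorphism of \emph{completed local rings}. For a closed immersion, equality of tangent spaces at $z'$ does not by itself force the local rings to coincide; you are implicitly using that at a non-critically refined point both $Y_\pi$ and $X(\bar\rho,z)$ are expected to be smooth (this smoothness is part of what Bellaiche--Chenevier establish under their conjecture). Either say this explicitly, or follow the paper and state the needed input as the isomorphism of completed local rings, which directly yields the dimension equality making each component of $Y_\pi(\bar\rho)$ a component of $X(\bar\rho,z)$. Also, your claim that $X(\bar\rho,z)$ is equidimensional is not established in the paper and is not needed for the argument; the reduction only requires comparing the dimension of each component of $Y_\pi(\bar\rho)$ with that of the component of $X(\bar\rho,z)$ containing it.
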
 
Bellaiche and Chenevier conjecture the infinitesimal analogue of our conjecture. Namely they conjecture that $\hat \Ocal_{Y_\pi,z}\cong \hat \Ocal_{X(\bar\rho,z),z}$, see \cite[Conjecture 7.6.12]{BellaicheChenevier}.
As our conjecture comes down to a computation of the dimensions of the irreducible components of $X(\bar\rho,z)$ we see that it is equivalent to conjecture that 
\[\hat \Ocal_{Y_\pi,z'}\cong \Ocal_{X(\bar\rho,z),z'}\]
holds for at least one point $z'$ in each irreducible component of $Y_{\pi}$.
\begin{rem}
 Bellaiche and Chenevier also show that this conjecture is implied by the Bloch-Kato conjecture. Hence it is a very safe conjecture. 
 \end{rem}

\bigskip

\end{document}